 \newcommand\dom{\operatorname{dom}}
\DeclareMathOperator{\re}{Re}
\DeclareMathOperator{\odd}{odd}
\DeclareMathOperator{\sign}{sign}
\DeclareMathOperator{\im}{Im}
\DeclareMathOperator{\eq}{eq}
\DeclareMathOperator{\li}{line}   
\DeclareMathOperator{\sech}{sech}
\DeclareMathOperator{\rad}{r}
\DeclareMathOperator{\half}{half}
\newcommand{\bb}[1]{\mathbf{#1}}
\newcommand{\mc}[1]{\mathcal{#1}}
\newcommand{\la}{\lambda}
\newcommand{\EE}{\mathcal E}  
\newtheorem{theorem}{Theorem}[section]
 \newtheorem{lemma}[theorem]{Lemma}
 \newtheorem{proposition}[theorem]{Proposition}
 \theoremstyle{definition}
 \newtheorem{definition}[theorem]{Definition}
 \theoremstyle{remark}
 \newtheorem{remark}[theorem]{Remark}
 \numberwithin{equation}{section}
\begin{document}

\title{Blow-up and strong instability of standing waves for the NLS-$\delta$ equation on a star graph}

\author[ng]{Nataliia Goloshchapova\corref{cor}}
\ead{nataliia@ime.usp.br}
\author[mo]{Masahito Ohta}
\ead{mohta@rs.tus.ac.jp}
\cortext[cor]{Corresponding author}

\address[ng]{Department of Mathematics, IME-USP, Rua do Mat\~{a}o 1010, Cidade Universit\'{a}ria, CEP 05508-090, S\~{a}o Paulo, SP, Brazil}
\address[mo]{Department of Mathematics, Tokyo University of Science, 1-3 Kagurazaka, Shinjukuku, Tokyo 162-8601, Japan}

\begin{abstract}
We  study strong instability (by blow-up) of the standing waves for  the nonlinear Schr\"odinger equation with $\delta$-interaction on a star graph $\Gamma$. The key ingredient is a  novel variational technique applied to the standing wave solutions being minimizers of a specific variational problem. We also show well-posedness of the corresponding Cauchy problem in the domain of the self-adjoint operator which defines $\delta$-interaction. This permits to prove virial identity for the $H^1$- solutions to the Cauchy problem. We also prove certain strong instability results for the standing waves of the NLS-$\delta'$ equation on the line.
\end{abstract}

\begin{keyword} $\delta$- and $\delta'$-interaction \sep  Nonlinear Schr\"odinger equation\sep  strong  instability \sep standing wave \sep star graph\sep virial identity.

\MSC[2010]
35Q55; 81Q35; 37K40; 37K45; 47E05\end{keyword}
\maketitle

\section{Introduction}
Let $\Gamma$ be  a star graph, 
i.e. $N$ half-lines $(0,\infty)$ joined at the vertex $\nu=0$. 
On $\Gamma$ we consider the following nonlinear Schr\"odinger equation 
with $\delta$-interaction (NLS-$\delta$)
\begin{equation}\label{NLS_graph_ger}
i\partial_t \mathbf{U}(t,x)-H\mathbf{U}(t,x) +|\mathbf{U}(t,x)|^{p-1}\mathbf{U}(t,x)=0,
\end{equation}
where $p>1$, $\mathbf{U}(t,x)=(u_j(t,x))_{j=1}^N:\mathbb{R}\times \mathbb{R}_+\rightarrow \mathbb{C}^N$,\,  nonlinearity acts componentwise,  i.e.  $(|\mathbf{U}|^{p-1}\mathbf{U})_j=|u_j|^{p-1}u_j$ and $H$ is the self-adjoint operator on $L^2(\Gamma)$ defined  by
\begin{equation}\label{D_alpha}
\begin{split}
(H \mathbf{V})(x)&=\left(-v_j''(x)\right)_{j=1}^N,\quad x> 0,\quad  \mathbf{V}=(v_j)_{j=1}^N,\\
\dom(H)&=\left\{\mathbf{V}\in H^2(\Gamma): v_1(0)=\ldots=v_N(0),\,\,\sum\limits_{j=1}^N  v_j'(0)=\alpha v_1(0)\right\}.
\end{split}
\end{equation}
\noindent Condition \eqref{D_alpha} is an analog of  $\delta$-interaction condition  for the 
Schr\"odinger operator on the line  (see \cite{AlbGes05}). 
On each edge of the graph (i.e. on each half-line) we have
$$i\partial_t u_j(t,x)+\partial_x^2u_j(t,x) +|u_j(t,x)|^{p-1}u_j(t,x)=0,\,\,x>0,\,\,j\in\{1,\ldots,N\},$$ 
moreover,  $\bb U(t,0)=(u_j(t,0))_{j=1}^N$  belongs to $\dom(H)$.

In the present paper we are aimed to study the strong instability of the standing wave solutions $\bb U(t,x)= e^{i\omega t}\bb\Phi(x)$ to \eqref{NLS_graph_ger}. It is easily seen that $\bb \Phi(x)$ satisfies  stationary equation \begin{equation}\label{H_alpha}
H\mathbf{\Phi}+\omega\mathbf{\Phi}-|\mathbf{\Phi}|
^{p-1}\mathbf{\Phi}=0.
\end{equation}
In \cite{AdaNoj14} the following description of the real-valued solutions to \eqref{H_alpha} was obtained.

\begin{theorem}
Let  $[s]$ denote the integer part of $s\in\mathbb{R}$, $\alpha\neq 0$.  Then equation \eqref{H_alpha} has $\left[\tfrac{N-1}{2}\right]+1$ (up to permutations of the edges of $\Gamma$) vector solutions $\mathbf{\Phi}_k^\alpha=(\varphi^\alpha_{k,j})_{j=1}^N, \,\,k=0,\ldots,\left[\tfrac{N-1}{2}\right]$, which are given by
\begin{equation}\label{Phi_k}
\begin{split}
 \varphi_{k,j}^\alpha(x)&= \left\{
                    \begin{array}{ll}
                      \Big[\frac{(p+1)\omega}{2} \sech^2\Big(\frac{(p-1)\sqrt{\omega}}{2}x-a_k\Big)\Big]^{\frac{1}{p-1}}, & \quad\hbox{$j=1,\ldots,k$;} \\
                     \Big[\frac{(p+1)\omega}{2} \sech^2\Big(\frac{(p-1)\sqrt{\omega}}{2}x+a_k\Big)\Big]^{\frac{1}{p-1}}, & \quad\hbox{$j=k+1,\ldots,N,$}
                    \end{array}
                  \right.\\
                  \text{where}\;\; a_k&=\tanh^{-1}\left(\frac{\alpha}{(2k-N)\sqrt{\omega}}\right),\,\,\text{and}\,\,\,\,\omega>\tfrac{\alpha^2}{(N-2k)^2}.
                  \end{split}
\end{equation}
\end{theorem}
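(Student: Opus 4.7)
The plan is to reduce the system \eqref{H_alpha} to ODE analysis on each half-line, supplemented by the vertex conditions of $\dom(H)$. On the $j$-th edge, \eqref{H_alpha} reads $-\varphi_j''+\omega\varphi_j-|\varphi_j|^{p-1}\varphi_j=0$ on $(0,\infty)$. A standard phase-plane argument---multiply by $\varphi_j'$ and use decay at $+\infty$---shows that the only real-valued $H^1(0,\infty)$ solutions are $\varphi_j(x)=\pm Q_\omega(x-t_j)$ for some shift $t_j\in\mathbb{R}$, where
\[
Q_\omega(x)=\left[\tfrac{(p+1)\omega}{2}\sech^2(cx)\right]^{1/(p-1)},\qquad c=\tfrac{(p-1)\sqrt{\omega}}{2},
\]
is the standard sech-soliton. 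Because $Q_\omega$ is even and decays at $\pm\infty$, every such translate lies in $H^1(0,\infty)$; continuity at the vertex together with a global sign choice lets us take $+$ on every component.

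Next I impose the two vertex conditions. Continuity $\varphi_1(0)=\cdots=\varphi_N(0)$ is equivalent to $Q_\omega(t_1)=\cdots=Q_\omega(t_N)$, and since $Q_\omega$ is even and strictly decreasing on $[0,\infty)$, this forces $|t_j|$ to equal a common value $a/c$ with $a\ge 0$. Write $t_j=\varepsilon_j a/c$ with $\varepsilon_j\in\{\pm 1\}$ and permute edges so that $\varepsilon_1=\cdots=\varepsilon_k=+1$ and $\varepsilon_{k+1}=\cdots=\varepsilon_N=-1$. The oddness of $Q_\omega'$ then collapses $\sum_j\varphi_j'(0)=\sum_j Q_\omega'(-t_j)$ to $(N-2k)\,Q_\omega'(a/c)$, while $\alpha\varphi_1(0)=\alpha\,Q_\omega(a/c)$, so the jump condition becomes
\[
(N-2k)\,\frac{Q_\omega'(a/c)}{Q_\omega(a/c)}=\alpha.
\]
A direct logarithmic-derivative computation gives $Q_\omega'(y)/Q_\omega(y)=-\sqrt{\omega}\tanh(cy)$, so the matching condition reduces to $\tanh(a)=\alpha/((2k-N)\sqrt{\omega})$. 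This admits a unique solution $a=a_k$ precisely when $2k\neq N$ and $|\alpha/((2k-N)\sqrt{\omega})|<1$, i.e.\ $\omega>\alpha^2/(N-2k)^2$; substituting back recovers the explicit formula in \eqref{Phi_k}.

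It remains to count inequivalent profiles modulo permutation of edges. Replacing $k$ by $N-k$ swaps the two groups and flips $a_k\mapsto -a_k$; by evenness of $\sech^2$ the resulting vector equals the original $\bb\Phi_k^\alpha$ up to a relabelling of edges. Inequivalent solutions are therefore indexed by $k\in\{0,1,\ldots,[N/2]\}$ excluding the degenerate value $k=N/2$ (for which the matching equation would force $\alpha=0$), yielding exactly $[(N-1)/2]+1$ profiles.

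The ODE classification on the half-line and the logarithmic-derivative identity are the routine ingredients. The step I would treat with the most care is the redundancy analysis in the last paragraph---namely, verifying that $k\leftrightarrow N-k$ is the only source of permutation equivalence and that no solution escapes this count at the excluded value $2k=N$ when $\alpha\neq 0$.
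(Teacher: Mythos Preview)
The paper does not give its own proof of this statement; it is quoted verbatim from \cite{AdaNoj14}. Your argument is correct and is precisely the standard derivation carried out there: classify the real $H^1(\mathbb{R}_+)$ solutions of the scalar ODE on each edge as shifted sech-profiles via the first integral, then impose the continuity and $\delta$-vertex conditions to pin down the shifts.

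The two points you flag as needing care are easily closed. For $2k=N$ the jump condition collapses to $0=\alpha$, which is excluded by hypothesis, so no solution is lost there. For the uniqueness of the $k\leftrightarrow N-k$ identification, note that for $k\neq k'$ in $\{0,\ldots,[(N-1)/2]\}$ one has $|2k-N|\neq|2k'-N|$, hence $|a_k|\neq|a_{k'}|$ and the common vertex value $Q_\omega(|a_k|/c)$ distinguishes the two profiles; thus no further permutation-equivalences occur. One small cosmetic point: since you defined $a\ge 0$ via $|t_j|=a/c$, the equation $\tanh(a)=\alpha/((2k-N)\sqrt{\omega})$ selects exactly one of $k$ or $N-k$ for a given sign of $\alpha$; the passage to the signed $a_k$ in the theorem's formula is just the relabeling under that symmetry, which you already noted.
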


\begin{definition} We say that $e^{i\omega t}\bb \Phi_k^\alpha$ is \textit{strongly unstable} if for any $\varepsilon > 0$ there 
exists $\bb U_0\in\mc E(\Gamma)$ such that $\|\bb U_0 -\bb\Phi_k^\alpha\|_{H^1(\Gamma)} <\varepsilon$ and the solution $\bb U(t)$ of \eqref{NLS_graph_ger} with $\bb U(0) = \bb U_0$ blows up in finite time (see definition of $\EE(\Gamma)$ in Notation section).
\end{definition}
Note that strong instability is the particular case of the\textit{ orbital instability} (which means that the corresponding solution to the Cauchy problem does not stay close to the orbit generated by the profile $\bb\Phi_k^\alpha$).
Study of the  orbital stability of the profiles $\mathbf{\Phi}_k^\alpha$ was initiated in \cite{AdaNoj14, AdaNoj16}. In particular, the authors considered the case $\alpha<\alpha^*<0,k=0$ (see Section \ref{sec3} for the interpretation of $\alpha^*$). They proved that  for $1<p\leq 5$ and $\omega>\frac{\alpha^2}{N^2}$ one gets orbital stability in $\EE(\Gamma)$, while for $p>5$ and $\omega>\omega^*>\frac{\alpha^2}{N^2}$ the standing wave is orbitally unstable.  The case of $k=0$ and $\alpha>0$ was considered in \cite{AngGol18, Kai17}. Essentially it had been proven that the standing wave is orbitally unstable for any $p>1$ and $\omega>\frac{\alpha^2}{N^2}$. The case of $\alpha\neq 0, k\neq 0$ was studied  in \cite{AngGol18a, Kai17}. 

The main  results of this paper are the following two strong instability theorems for $k=0$. 

\begin{theorem}\label{alpha>0}
Let  $\alpha>0$, $\omega>\frac{\alpha^2}{N^2}$,  and $p\geq 5$, then  
the standing wave $e^{i\omega t}\bb \Phi_0^\alpha(x)$ is  strongly unstable.
 \end{theorem}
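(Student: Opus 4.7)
The approach is the Berestycki--Cazenave--Ohta scheme: combine a variational characterization of $\bb\Phi_0^\alpha$ with a virial identity. Introduce the action
\begin{equation*}
S_\omega(\bb V) = \frac12\|\bb V'\|^2_{L^2(\Gamma)} + \frac{\omega}{2}\|\bb V\|^2_{L^2(\Gamma)} + \frac{\alpha}{2}|v_1(0)|^2 - \frac{1}{p+1}\|\bb V\|^{p+1}_{L^{p+1}(\Gamma)}
\end{equation*}
and the Pohozaev functional
\begin{equation*}
P(\bb V) = \|\bb V'\|^2_{L^2(\Gamma)} + \frac{\alpha}{2}|v_1(0)|^2 - \frac{p-1}{2(p+1)}\|\bb V\|^{p+1}_{L^{p+1}(\Gamma)},
\end{equation*}
which equals $\left.\frac{d}{d\lambda}S_\omega(\bb V^\lambda)\right|_{\lambda=1}$ along the $L^2$-preserving dilation $\bb V^\lambda(x)=\lambda^{1/2}\bb V(\lambda x)$, and for which a direct integration by parts yields $P(\bb\Phi_0^\alpha)=0$, the boundary term $\frac{\alpha}{2}|\phi(0)|^2$ being produced at the vertex by the $\delta$-condition. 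The first and most essential step is to prove, using the ``novel variational technique'' alluded to in the abstract, that
\begin{equation*}
d := S_\omega(\bb\Phi_0^\alpha) = \inf\{S_\omega(\bb V) : \bb V \in \EE(\Gamma)\setminus\{0\},\ P(\bb V)=0\},
\end{equation*}
possibly after restricting to a natural admissible subclass that accommodates the case $\alpha>0$.

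Next I set $\mathcal{B} = \{\bb V \in \EE(\Gamma) : S_\omega(\bb V) < d,\ P(\bb V) < 0\}$ and show that it is invariant under the flow: conservation of $S_\omega$ together with continuity of $P$ rule out a crossing of $\{P=0\}$ inside $\{S_\omega<d\}$, since any such crossing point would be a non-zero competitor contradicting the definition of $d$. A one-parameter comparison along the dilation $\bb V^\lambda$, using the minimality of $d$, then yields the quantitative Lyapunov estimate
\begin{equation*}
P(\bb V) \leq 2\bigl(S_\omega(\bb V)-d\bigr) < 0 \qquad \forall\, \bb V \in \mathcal{B}.
\end{equation*}
The hypothesis $p\geq 5$ is used precisely here, through the sign of $\left.\frac{d}{d\lambda}(S_\omega-\tfrac{1}{2}P)(\bb V^\lambda)\right|_{\lambda=1}$, which is non-negative in the critical/supercritical regime and closes the above one-parameter inequality.

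Using the well-posedness in $\dom(H)$ established earlier in the paper, derive the virial identity
\begin{equation*}
\frac{d^2}{dt^2}\|x\bb U(t)\|^2_{L^2(\Gamma)} = 8\,P(\bb U(t)),\qquad \bb U_0 \in \dom(H)\cap \Sigma,\ \ \Sigma := \{\bb V : x\bb V \in L^2(\Gamma)\};
\end{equation*}
the $\delta$-condition furnishes the $\alpha|u_1(0)|^2$ contribution upon the double integration by parts at the vertex. Given $\varepsilon>0$, take $\bb U_0 := \bb\Phi_0^{\alpha,\lambda}$ with $\lambda>1$ close to $1$: differentiating $S_\omega$ and $P$ in $\lambda$ at $\lambda=1$, together with $P(\bb\Phi_0^\alpha)=0$ and $p\geq 5$, shows $\bb U_0 \in \mathcal{B}$ and $\|\bb U_0-\bb\Phi_0^\alpha\|_{H^1}\to 0$ as $\lambda\to 1$; a small regularization preserves the $\delta$-condition and keeps $\bb U_0 \in \dom(H)\cap\Sigma$. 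Invariance of $\mathcal{B}$ then gives $P(\bb U(t))\leq 2(S_\omega(\bb U_0)-d)<0$ on the whole lifespan, so $I(t):=\|x\bb U(t)\|^2_{L^2(\Gamma)}$ is uniformly strictly concave and non-negative, forcing finite maximal existence time, i.e.\ blow-up.

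The hardest and genuinely graph-specific step is the variational characterization of $\bb\Phi_0^\alpha$: for $\alpha>0$ this profile is an \emph{excited} (saddle-type) standing wave, already orbitally unstable for every $p>1$, and therefore cannot be a ground state of $S_\omega$ on all of $H^1(\Gamma)$. Identifying the correct admissible class on which $\bb\Phi_0^\alpha$ does attain the infimum of $S_\omega$ subject to $P=0$, and proving compactness of the associated minimizing sequences in the presence of the repulsive $\delta$-interaction, is where the paper's new idea must intervene; once that is in hand, Steps 2 and 3 above are relatively standard.
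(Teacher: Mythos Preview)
Your overall architecture---invariant set $\mathcal{B}$, the key inequality $\bb P(\bb V)\le 2(\bb S_\omega(\bb V)-d)$, the virial identity, concavity blow-up, and approximation by $\bb\Phi^\lambda$ for $\lambda>1$---matches the paper. The gap is exactly where you flag it, but the resolution is both simpler and structurally different from what you sketch.

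The admissible class is $\EE_{\eq}(\Gamma)=\{\bb V\in\EE(\Gamma): v_1=\cdots=v_N\}$, the functions equal on every edge; this subspace is invariant under the flow (by uniqueness for the Cauchy problem, see \cite{AngGol18}), so the entire blow-up machinery can be run inside it. On $\EE_{\eq}$ the problem is isomorphic to the radial NLS-$\delta$ on $\mathbb{R}$ with coupling $2\alpha/N$, and the variational characterization is \emph{imported} from the known line results \cite{FukJea08,FukOht08}: $\bb\Phi_0^\alpha$ minimizes $\bb S_\omega$ on the \emph{Nehari} manifold $\{\bb I_\omega=0\}\cap\EE_{\eq}$, not on $\{\bb P=0\}$. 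No new compactness argument is needed. The inequality $d_{\eq}(\omega)\le \bb S_\omega(\bb V)-\tfrac12\bb P(\bb V)$ for $\bb V\in\EE_{\eq}\setminus\{0\}$ with $\bb P(\bb V)\le 0$ is then obtained by a short dilation trick (Lemma~\ref{lem-dsp01}): the scaling $\bb V^\lambda$ hits $\{\bb I_\omega=0\}$ at some $\lambda_0>0$, while $\lambda\mapsto \bb S_\omega(\bb V^\lambda)-\tfrac{\lambda^2}{2}\bb P(\bb V)=\tfrac{2\lambda-\lambda^2}{4}\alpha|v_1(0)|^2+\tfrac{\beta\lambda^2-2\lambda^\beta}{2(p+1)}\|\bb V\|_{p+1}^{p+1}+\tfrac{\omega}{2}\|\bb V\|_2^2$ is maximized at $\lambda=1$; this last step uses $\alpha>0$ and $\beta=\tfrac{p-1}{2}\ge2$ simultaneously. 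So the ``novel variational technique'' you anticipate reduces to (i) the $\EE_{\eq}$ restriction, and (ii) passing from the Nehari characterization to the $\bb P$-inequality by dilation, rather than solving a $\{\bb P=0\}$ minimization directly. A minor point: no regularization into $\dom(H)$ is needed, since the virial identity (Proposition~\ref{prop-virial}) is extended by density to all initial data in $\Sigma(\Gamma)$, and $\bb\Phi^\lambda$ already lies there.
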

  Observe that in   \cite[Theorem 1.1]{AngGol18}  the authors obtained orbital instability results only for $1<p<5$. Namely, the above theorem completes instability results for $p\geq 5$.   
\begin{theorem}\label{alpha<0}
Let $\alpha<0,\, p>5,\omega>\frac{\alpha^2}{N^2}$. 
Let $\xi_1(p)\in(0,1)$ be a unique solution of 
$$\frac{p-5}{2}\int\limits_\xi^1(1-s^2)^{\frac{2}{p-1}}ds=\xi(1-\xi^2)^{\frac{2}{p-1}},\quad (0<\xi<1),$$ and define $\omega_1=\omega_1(p,\alpha)=\frac{\alpha^2}{N^2\xi_1^2(p)}$. 
Then the standing wave solution $e^{i\omega t}\bb \Phi^\alpha_0$ is strongly unstable for all $\omega\in [\omega_1, \infty)$. 
\end{theorem}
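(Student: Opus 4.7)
The plan is to apply the Berestycki--Cazenave/Ohta virial scheme, already used to prove Theorem \ref{alpha>0}, but combined with a sharp second-variation computation along $L^{2}$-preserving dilations that accounts for the threshold $\omega_{1}(p,\alpha)$. Denote the action $S_{\omega}(\bb V):=\tfrac12\|\bb V'\|_{L^{2}(\Gamma)}^{2}+\tfrac{\alpha}{2}|v_{1}(0)|^{2}+\tfrac{\omega}{2}\|\bb V\|_{L^{2}(\Gamma)}^{2}-\tfrac{1}{p+1}\|\bb V\|_{L^{p+1}(\Gamma)}^{p+1}$, the Nehari functional $I_{\omega}(\bb V):=\langle S_{\omega}'(\bb V),\bb V\rangle$, the dilation $\bb V^{\lambda}(x):=\lambda^{1/2}\bb V(\lambda x)$, and the virial functional $P(\bb V):=\tfrac{d}{d\lambda}\big|_{\lambda=1}S_{\omega}(\bb V^{\lambda})=\|\bb V'\|_{L^{2}(\Gamma)}^{2}+\tfrac{\alpha}{2}|v_{1}(0)|^{2}-\tfrac{p-1}{2(p+1)}\|\bb V\|_{L^{p+1}(\Gamma)}^{p+1}$. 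Thanks to the well-posedness in $\dom(H)$ announced in the abstract, the virial identity $\tfrac{d^{2}}{dt^{2}}\int_{\Gamma}x^{2}|\bb U(t,x)|^{2}\,dx=8P(\bb U(t))$ is available for $H^{1}$-solutions.

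The core is the variational/scaling step. Using the characterization of $\bb\Phi_{0}^{\alpha}$ established in \cite{AdaNoj14,AngGol18}, $\bb\Phi_{0}^{\alpha}$ minimizes $S_{\omega}$ on the Nehari-type set $\mc N:=\{\bb V\in\EE(\Gamma)\setminus\{0\}:I_{\omega}(\bb V)=0\}$. Since $\bb\Phi_{0}^{\alpha}$ is critical for $S_{\omega}$, the function $g(\lambda):=S_{\omega}((\bb\Phi_{0}^{\alpha})^{\lambda})$ automatically satisfies $g'(1)=P(\bb\Phi_{0}^{\alpha})=0$. Writing $\xi:=\tanh(a_{0})=|\alpha|/(N\sqrt\omega)\in(0,1)$ and substituting $s=\tanh\bigl(\tfrac{(p-1)\sqrt\omega}{2}x+a_{0}\bigr)$, each edge integral reduces to a power of $(1-s^{2})$ on $(\xi,1)$, and $g''(1)$ becomes an explicit linear combination of $\int_{\xi}^{1}(1-s^{2})^{2/(p-1)}ds$ and the boundary contribution $\xi(1-\xi^{2})^{2/(p-1)}$. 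A direct examination of its sign shows that $g''(1)\leq 0$ is equivalent to $\xi\leq\xi_{1}(p)$, i.e., to $\omega\geq\omega_{1}$. Hence, for such $\omega$ and each $\lambda>1$ sufficiently close to $1$, the element $\bb U_{0}^{\lambda}:=(\bb\Phi_{0}^{\alpha})^{\lambda}\in\EE(\Gamma)$ is $H^{1}$-close to $\bb\Phi_{0}^{\alpha}$ and satisfies $\|\bb U_{0}^{\lambda}\|_{L^{2}}=\|\bb\Phi_{0}^{\alpha}\|_{L^{2}}$, $S_{\omega}(\bb U_{0}^{\lambda})<S_{\omega}(\bb\Phi_{0}^{\alpha})$, and $P(\bb U_{0}^{\lambda})<0$.

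Let $\bb U(t)$ be the $H^{1}$-solution of \eqref{NLS_graph_ger} with $\bb U(0)=\bb U_{0}^{\lambda}$, and set $\mc K:=\{\bb V\in\EE(\Gamma):S_{\omega}(\bb V)<S_{\omega}(\bb\Phi_{0}^{\alpha}),\ \|\bb V\|_{L^{2}}=\|\bb\Phi_{0}^{\alpha}\|_{L^{2}},\ P(\bb V)<0\}$. The conservation of energy and charge, continuity of $P(\bb U(t))$ in $t$, and the key inequality $S_{\omega}(\bb V)\geq S_{\omega}(\bb\Phi_{0}^{\alpha})$ on the set $\{P(\bb V)=0\}\cap\{\|\bb V\|_{L^{2}}=\|\bb\Phi_{0}^{\alpha}\|_{L^{2}}\}\cap(\EE(\Gamma)\setminus\{0\})$ -- obtained by rescaling any such $\bb V$ into $\mc N$ without increasing $S_{\omega}$ and invoking the Nehari minimization above -- together yield invariance of $\mc K$ under the flow together with a uniform bound $-P(\bb U(t))\geq\delta>0$ throughout the maximal lifespan. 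The virial identity then forces $\tfrac{d^{2}}{dt^{2}}\int_{\Gamma}x^{2}|\bb U(t,x)|^{2}\,dx\leq -8\delta$, and the classical convexity/positivity argument produces finite-time blow-up.

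The main obstacle is the sharp computation of $g''(1)$ and its coupling with the variational minimality on $\mc N$. For $\alpha>0$ the boundary term $\tfrac{\alpha}{2}|v_{1}(0)|^{2}$ in $S_{\omega}$ is favourable and the dilation $\bb V\mapsto\bb V^{\lambda}$ is automatically a descent direction at $\bb\Phi_{0}^{\alpha}$, independently of $\omega$; for $\alpha<0$ this sign is reversed and one must quantify precisely when the unfavourable linear contribution is defeated by the supercritical nonlinearity, which is exactly what the transcendental equation defining $\xi_{1}(p)$ encodes. A secondary delicate point is that reducing $\{P=0\}$ to $\mc N$ requires preserving the $L^{2}$-norm, which is the reason for building the $L^{2}$-constraint into $\mc K$.
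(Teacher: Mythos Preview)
Your outline follows the right template, but the step you call the ``key inequality'' is a genuine gap, and it is precisely the place where the case $\alpha<0$ differs from $\alpha>0$.

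You assert that for every $\bb V\in\EE(\Gamma)\setminus\{0\}$ with $P(\bb V)=0$ and $\|\bb V\|_{2}=\|\bb\Phi_0^\alpha\|_{2}$ one has $S_\omega(\bb V)\ge S_\omega(\bb\Phi_0^\alpha)$, and you justify this by ``rescaling $\bb V$ into $\mc N$ without increasing $S_\omega$''. For $\alpha<0$ this rescaling argument does not go through. With $g(\lambda)=S_\omega(\bb V^\lambda)$ one has $g'(1)=P(\bb V)=0$ and
\[
g''(1)=\|\bb V'\|_2^2-\tfrac{\beta(\beta-1)}{p+1}\|\bb V\|_{p+1}^{p+1}
=-\tfrac{\alpha}{2}|v_1(0)|^2-\tfrac{\beta(\beta-2)}{p+1}\|\bb V\|_{p+1}^{p+1},
\]
where the first term is positive when $\alpha<0$; so $\lambda=1$ need not be a maximum of $g$ for a generic $\bb V$ on the $P=0$ surface, and the rescaling to the Nehari manifold can \emph{increase} $S_\omega$. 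The hypothesis $\omega\ge\omega_1$ controls this sign only for the special choice $\bb V=\bb\Phi_0^\alpha$, not along the flow; in your scheme the threshold $\omega_1$ is used only to place the initial datum in $\mc K$, while the invariance/uniform-bound step never invokes it. That cannot be right: if your key inequality and the resulting bound $-P(\bb U(t))\ge\delta$ held as stated, they would hold for every $\omega>\alpha^2/N^2$, which is incompatible with the known orbital stability for $\omega$ close to $\alpha^2/N^2$. A second, related gap is that invariance of $\mc K$ alone does not yield a \emph{uniform} lower bound $-P(\bb U(t))\ge\delta>0$; one needs a quantitative comparison of the form $S_\omega(\bb\Phi_0^\alpha)\le S_\omega(\bb V)-\tfrac12 P(\bb V)$ on an invariant set.

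The paper closes this gap by enlarging the set of constraints (working in $\EE_{\eq}(\Gamma)$ and adding $\|\bb V\|_{p+1}>\|\bb\Phi_0^\alpha\|_{p+1}$ in addition to the $L^2$ constraint) and proving a delicate comparison lemma: if $\|\bb V\|_2\le\|\bb\Phi_0^\alpha\|_2$, $\|\bb V\|_{p+1}>\|\bb\Phi_0^\alpha\|_{p+1}$ and $P(\bb V)\le0$, then $S_\omega(\bb\Phi_0^\alpha)\le S_\omega(\bb V)-\tfrac12 P(\bb V)$. The proof of this lemma rescales $\bb V$ so that $\|\bb V^{\lambda_0}\|_{p+1}=\|\bb\Phi_0^\alpha\|_{p+1}$, uses that the Nehari level is attained there, and then compares $f(\lambda_0)$ with $f(1)$ for $f(\lambda)=S_\omega(\bb V^\lambda)-\tfrac{\lambda^2}{2}P(\bb V)$; it is in this last comparison that the condition $\partial_\lambda^2 E(\bb\Phi^\lambda)\big|_{\lambda=1}\le0$ (equivalently $\omega\ge\omega_1$) is used in an essential way, via a Pohozaev identity and an elementary but nontrivial inequality for $\lambda\in(0,1)$. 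The $L^{p+1}$ constraint is needed to guarantee $\lambda_0\in(0,1)$ and is itself shown to be flow-invariant. You should therefore replace your ``key inequality'' by this quantitative comparison and incorporate the $L^{p+1}$ constraint into the invariant set; your computation identifying $\omega_1$ through $g''(1)$ is correct and is exactly the first lemma in the paper's chain.
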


To prove the above theorems  we use the  ideas by  \cite{FukayaO,Oht18}. It is worth mentioning that recently a lot of strong instability results have been obtained for different models based on the NLS equation (see \cite{Coz08, Oht18a, OhtYam15, OhtYam16} and references therein).  

 Classically the essential  ingredient in the proofs of blow-up results is the virial identity for the solution to the Cauchy problem with the initial data from the $L^2$-weighted space of the quadratic weight (see \cite[Chapter 6]{Caz03}). In Subsection \ref{D_H_vir} we prove the virial identity for the NLS-$\delta$ equation on $\Gamma$ using classical approach based on the approximation of $H^1$-initial data by the sequence of initial data functions with higher regularity. In particular, to do this we first prove the well-posedness of \eqref{NLS_graph_ger} in $\dom(H)$ with the norm $\|\cdot\|_H=\|(H+m)\cdot\|_{L^2}$ (here $H+m>0$). 
 
  Another important ingredient in the strong instability proofs is the variational characterization of the profile $\bb \Phi_0^\alpha$. In particular, this profile is the minimizer of the action functional $\bb S_\omega$ in the space $\mc E_{\eq}(\Gamma)$ 
restricted to the Nehari manifold. This characterization follows from the results obtained in \cite{FukJea08, FukOht08} for the NLS equation with $\delta$-interaction on the line. 

In Section 5 we apply our technique to show strong instability Theorems \ref{main_as} and \ref{main_odd} for the standing waves of the  NLS equation with attractive $\delta'$-interaction on the line. Their variational characterization has  been obtained in \cite{AdaNoj13}.

The paper is organized as follows.  In Section \ref{sec2} we prove well-posedness of the NLS-$\delta$ equation in $\dom(H)$ and show the virial identity as well.  Section \ref{sec3} is devoted to the variational characterization of the profile $\bb \Phi_0^\alpha$, while in Section \ref{sec4} we prove  Theorems \ref{alpha>0} and \ref{alpha<0}. In Section \ref{sec5} we consider the NLS-$\delta'$ equation on the line. In Section \ref{sec6} we show so-called \lq\lq product rule" for the derivative of the  unitary group $e^{iHt}$, which is strongly used in the proof of the well-posedness.    
\

\

\noindent{\bf Notation.}

The domain and the spectrum  of the operator $H$ are denoted by $\dom(H)$ and   $\sigma(H)$ respectively.

By $H^1_{\rad}(\mathbb{R})$ we denote the subspace of even functions in the Sobolev space $H^1(\mathbb{R})$. The dual space for $H^1(\mathbb{R}\setminus\{0\})$ is denoted by $H^{-1}(\mathbb{R}\setminus\{0\})$.

  On the star graph $\Gamma$ we define 
  \begin{equation*}
L^q(\Gamma)=\bigoplus\limits_{j=1}^N L^q(\mathbb{R}_+),\, q\ge 1, 
\quad H^1(\Gamma)=\bigoplus\limits_{j=1}^NH^1(\mathbb{R}_+),\quad H^2(\Gamma)=\bigoplus\limits_{j=1}^NH^2(\mathbb{R}_+). 
\end{equation*}  
For instance, the norm in $L^q(\Gamma)$ is  
$$\|\bb V\|^q_{L^q(\Gamma)}=\sum\limits_{j=1}^N\|v_j\|^q_{L^q(\mathbb{R}_+)},\quad \mathbf{V}=(v_j)_{j=1}^N.$$  
 By  $\|\cdot\|_q$ we  will  denote  the norm   in  $L^q(\cdot)$ (for the function on  $\Gamma$, or $\mathbb{R}$, or $\mathbb{R}_+$). 

We also define the spaces 
\begin{align*}
&\mathcal{E}(\Gamma)
=\{\bb V\in H^1(\Gamma): \,  v_1(0)=\ldots=v_N(0)\},\\
&\EE_{\eq}(\Gamma)
=\left\{ \bb V\in  \EE(\Gamma):\, v_1(x)=\ldots=v_N(x),\, x\in\mathbb{R}_+ \right\}.
\end{align*}
Moreover, the dual space for $\EE(\Gamma)$ is denoted by $\EE'(\Gamma)$.
Finally, we set   $\Sigma(\Gamma)$ for  the following weighted Hilbert space 
\[
\Sigma(\Gamma)
=\{ \bb V\in \EE(\Gamma): x \bb V\in L^2 (\Gamma)\}. 
\]
For   $\bb W=(w_j)_{j=1}^N$  on $\Gamma$, we will abbreviate
$$\int\limits_\Gamma\bb W dx=\sum\limits_{j=1}^N\int\limits_{\mathbb{R}_+}w_j dx.$$ 
 It is known that  $\inf \sigma(H)=\left\{\begin{array}{c}
0,\,\, \alpha\geq 0,\\
-\frac{\alpha^2}{N^2},\,\, \alpha<0.
\end{array}\right.$
Given the quantity 
\begin{equation*}\label{well_0}
0<m := 1 - 2\inf \sigma(H) <\infty,
\end{equation*} we introduce the norm $\|\bb\Psi\|_{H}:=\|(H + m)\bb\Psi\|_2$ that endows $\dom(H)$ with the structure of a Hilbert space. Observe that this norm for any real $\alpha$ is equivalent to $H^2$-norm on the graph.  Indeed, $$\|\bb\Psi\|^2_{H}=\|\bb \Psi''\|_2^2+m^2\|\bb\Psi\|^2_2+2m\|\bb\Psi'\|_2^2+2m\alpha|\psi_1(0)|^2.$$
Due to the choice of $m$ and the Sobolev embedding  we get $$C_1\|\bb \Psi\|_{H^2(\Gamma)}^2\leq \|\bb \Psi''\|_2^2+m\|\bb\Psi\|_2^2\leq \|\bb\Psi\|^2_{H}\leq C_2\|\bb \Psi\|_{H^2(\Gamma)}^2.$$ 
In what follows we will use the notation  $D_H=(\dom(H), \|\cdot\|_H)$.

By $C_j, C_j(\cdot), \,j\in\mathbb{N}$ and $C(\cdot)$ we will denote some positive constants. 

\section{Well-posedness}\label{sec2}
\subsection{Well-posedness in $H^1(\Gamma)$.}
It is known (see \cite{AdaNoj14, AngGol18, CacFin17}) that the Cauchy problem for equation \eqref{NLS_graph_ger} is well-posed. In particular, the following result holds.
\begin{theorem}\label{well_H1}
Let $p > 1$. Then for any $\bb U_0\in\mc E(\Gamma)$ there exists $T >
0$ such that equation \eqref{NLS_graph_ger} has a unique solution $\bb U(t)\in C([0, T], \mc E(\Gamma))\cap C^1([0, T],\mc E'(\Gamma))$ satisfying $\bb U(0) = \bb U_0$. For each $T_0\in (0, T)$ the mapping
$\bb U_0\in \mc E(\Gamma)\mapsto \bb U(t)\in C([0, T_0], \mc E(\Gamma))$ is continuous.  Moreover, the Cauchy problem for \eqref{NLS_graph_ger} has a maximal solution defined on
an interval of the form  $[0, T_{H^1})$, and the following “blow-up alternative” holds: either $T_{H^1} = \infty$ or $T_{H^1} < \infty$ and
$$\lim\limits_{t\to T_{H^1}}\|\bb U(t)\|_{H^1(\Gamma)} =\infty.$$
Furthermore, the solution $\bb U(t)$ satisfies 
\begin{equation}\label{conservation-laws}
\bb E(\bb U(t))=\bb E(\bb U_0),\quad \|\bb U(t)\|_2^2=\|\bb U_0\|_2^2
\end{equation}
for all $t\in[0, T_{H^1})$, where the energy is defined by
\begin{equation}\label{energy}\bb E(\bb V)
=\frac{1}{2}\|\bb V'\|_2^2+\frac{\alpha}{2}|v_1(0)|^2-\frac{1}{p+1}\|\bb V\|_{p+1}^{p+1}.
\end{equation}
\end{theorem}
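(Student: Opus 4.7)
The plan is to carry out the standard semigroup/Duhamel argument in the energy space, adapted to the graph geometry. First, using functional calculus for the self-adjoint operator $H$, one obtains a strongly continuous unitary group $e^{-iHt}$ on $L^2(\Gamma)$; since $\EE(\Gamma)$ coincides (up to a boundary form) with the form domain of $H$, this group restricts to a group of isomorphisms of $\EE(\Gamma)$ and extends by duality to $\EE'(\Gamma)$. Rewriting \eqref{NLS_graph_ger} in Duhamel form
$$\bb U(t)=e^{-iHt}\bb U_0+i\int_0^t e^{-iH(t-s)}|\bb U(s)|^{p-1}\bb U(s)\,ds,$$
I would solve it by the Banach fixed-point theorem on the closed ball of radius $2\|\bb U_0\|_{\EE(\Gamma)}$ in $C([0,T],\EE(\Gamma))$, endowed with the weaker $C([0,T],L^2(\Gamma))$ metric so as to absorb the loss coming from the nonlinearity.

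The key nonlinear estimate relies on the Sobolev embedding $H^1(\mathbb{R}_+)\hookrightarrow L^\infty(\mathbb{R}_+)$ applied on each edge: for $\bb V,\bb W$ in a ball of radius $R$ in $\EE(\Gamma)$,
$$\bigl\||\bb V|^{p-1}\bb V-|\bb W|^{p-1}\bb W\bigr\|_{\EE(\Gamma)}\leq C(R)\|\bb V-\bb W\|_{\EE(\Gamma)}.$$
One must check that $|\bb V|^{p-1}\bb V$ lies in $\EE(\Gamma)$, i.e.\ satisfies the vertex continuity condition; this is immediate because the nonlinearity acts componentwise and sends the common value $v_1(0)=\ldots=v_N(0)$ to its $p$-th power on every edge. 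Choosing $T=T(\|\bb U_0\|_{\EE(\Gamma)})$ small enough makes the map a strict contraction, yielding a unique fixed point $\bb U\in C([0,T],\EE(\Gamma))$; differentiating the Duhamel identity then gives $\partial_t\bb U=-iH\bb U+i|\bb U|^{p-1}\bb U\in C([0,T],\EE'(\Gamma))$, accounting for the $C^1$-in-time regularity. Continuous dependence on initial data follows from Gronwall applied to two Duhamel iterates.

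Conservation of mass is proved by pairing the equation with $\bb U$ in $\EE'\times \EE$ and taking the imaginary part, the $\delta$-boundary contribution $\alpha|u_1(t,0)|^2$ being real and hence irrelevant. For the energy identity the naive computation requires $\partial_t\bb U\in L^2(\Gamma)$, which is not directly available in the $H^1$ framework; I would instead first prove $\bb E(\bb U(t))=\bb E(\bb U_0)$ for smoother initial data in $\dom(H)$ (using the $D_H$-well-posedness established later in this section) and then pass to the limit via continuity of the flow map and of $\bb E$ on $\EE(\Gamma)$. The blow-up alternative follows from a standard continuation argument: the fixed-point setup can be iterated as long as $\|\bb U(t)\|_{H^1(\Gamma)}$ remains bounded, so maximality of $T_{H^1}$ forces $\|\bb U(t)\|_{H^1(\Gamma)}\to\infty$ as $t\to T_{H^1}^-$.

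The main obstacle, modulo standard semigroup machinery, lies in the interplay between the $\delta$-vertex condition and the approximation argument required for the energy identity: one needs the smoother approximations in $\dom(H)$ to converge in $\EE(\Gamma)$ to the given $H^1$-datum while retaining their $D_H$-regularity on a common interval of existence. Fortunately, both the unitary flow $e^{-iHt}$ and the pointwise map $\bb V\mapsto |\bb V|^{p-1}\bb V$ preserve the vertex constraint, so the classical scheme of \cite{Caz03} carries over with only graph-specific bookkeeping.
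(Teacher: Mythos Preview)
The paper does not prove this theorem: it is stated as a known result and attributed to \cite{AdaNoj14, AngGol18, CacFin17}, with no argument given. Your sketch is the standard energy-space Duhamel argument one expects to find in those references, so there is no strategic comparison to make.

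There is, however, a genuine gap in your plan for the energy identity. You propose to establish $\bb E(\bb U(t))=\bb E(\bb U_0)$ first for data in $\dom(H)$ by invoking ``the $D_H$-well-posedness established later in this section'' and then pass to the limit. But Theorem~\ref{well_D_H} is proved in this paper only under the restriction $p\ge 4$ (the remark following it explicitly flags this as a technical assumption). Your regularization therefore leaves energy conservation unjustified for $1<p<4$, which is precisely the range where Theorem~\ref{well_H1} is most delicate. To close this you need a different approximation: either regularize the nonlinearity (so that the formal energy computation becomes rigorous for the approximate problem and stability under the limit follows from continuous dependence), or use the abstract $H^2$-persistence argument of \cite[Chapter~3]{Caz03}, which yields $\bb U\in C([0,T],\dom(H))\cap C^1([0,T],L^2(\Gamma))$ for $\bb U_0\in\dom(H)$ without requiring a contraction in the $D_H$-norm.

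A smaller point: the $\EE$-to-$\EE$ Lipschitz bound you display is not available for $1<p<2$, since $\partial_x(|u|^{p-1}u)$ is only H\"older in $u$ there. This is presumably why you set up the fixed point on a ball in $\EE(\Gamma)$ equipped with the $L^2$ metric; just make sure the contraction estimate you actually use is the $L^2$ one, $\||\bb V|^{p-1}\bb V-|\bb W|^{p-1}\bb W\|_2\le C(R)\|\bb V-\bb W\|_2$, together with a separate bound showing the map preserves the $\EE$-ball.
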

\begin{remark}
Observe that for $1<p<5$ the global well-posedness holds due to the above conservation laws and  Gagliardo-Nirenberg inequality \eqref{G-N}.
\end{remark}

\subsection{Well-posedness in $D_H$ and virial identity}\label{D_H_vir}

\begin{theorem}\label{well_D_H}  Let $p\geq 4$ and $\bb U_0\in\dom(H)$. Then there exists $T>0$  such that equation  \eqref{NLS_graph_ger} has  a unique solution  $\bb U(t)\in C([0,T], D_H)\cap C^1([0,T], L^2(\Gamma))$ satisfying $\bb U(0)=\bb U_0$.  Moreover, the Cauchy problem for \eqref{NLS_graph_ger} has a maximal solution defined on
an interval of the form  $[0, T_H)$, and the following “blow-up alternative” holds: either $T_H = \infty$ or $T_H <\infty$ and
$$\lim\limits_{t\to T_H}\|\bb U(t)\|_H =\infty.$$
\end{theorem}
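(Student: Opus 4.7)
The plan is to solve the Cauchy problem by a Kato-type fixed-point argument in the Hilbert space $D_H$, starting from the Duhamel integral equation
\begin{equation*}
\bb U(t) = e^{-iHt}\bb U_0 + i\int_0^t e^{-iH(t-s)}\bigl(|\bb U(s)|^{p-1}\bb U(s)\bigr)\,ds.
\end{equation*}
Since $H$ is self-adjoint on $L^2(\Gamma)$, the group $\{e^{-iHt}\}_{t\in\mathbb{R}}$ is unitary on $L^2(\Gamma)$, leaves $\dom(H)$ invariant, and commutes with $H+m$ there. Hence a contraction mapping for $\bb U\in C([0,T],D_H)$ would follow at once from a local Lipschitz estimate of the nonlinearity $F(\bb U)=|\bb U|^{p-1}\bb U$ in the norm $\|\cdot\|_H$, and from the resulting fixed point one would read off $\bb U\in C^1([0,T],L^2(\Gamma))$ by differentiating the Duhamel formula.

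The main obstacle is that $F(\bb U)$ is generally \emph{not} in $\dom(H)$: although the continuity at the vertex is preserved by $F$, the $\delta$-condition $\sum_j v_j'(0)=\alpha v_1(0)$ is incompatible with the composition by a power nonlinearity. Consequently, one cannot directly apply $H+m$ to the integrand of Duhamel's formula. To bypass this difficulty I would invoke the product rule for $e^{-iHt}$ established in Section \ref{sec6}, which plays the role of the failed identity $\partial_x e^{-iHt}=e^{-iHt}\partial_x$ and accounts precisely for the boundary contribution produced at the vertex.

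Concretely, I would first obtain $\bb U\in C^1([0,T],L^2(\Gamma))$ by studying the linear non-autonomous problem satisfied by $\bb W=\partial_t \bb U$,
\begin{equation*}
i\partial_t \bb W = H\bb W - dF(\bb U)\bb W, \qquad \bb W(0)=-iH\bb U_0+iF(\bb U_0)\in L^2(\Gamma),
\end{equation*}
where $dF(\bb U)$ is the $\mathbb{R}$-linear Fréchet derivative of $F$ at $\bb U$, which is bounded on $L^2(\Gamma)$ provided $\bb U$ is bounded in $L^\infty(\Gamma)$ (and this is the case by the embedding $D_H\hookrightarrow H^2(\Gamma)\hookrightarrow L^\infty(\Gamma)$). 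A Duhamel/Gronwall argument in $L^2(\Gamma)$ for $\bb W$, with the product rule of Section \ref{sec6} used to justify commuting the extra derivative with $e^{-iHt}$, yields a uniform $L^2$ bound on $\bb W$ on a short time interval. The equation itself then gives
\begin{equation*}
H\bb U(t) = i\partial_t \bb U(t) + F(\bb U(t))\in L^2(\Gamma),
\end{equation*}
and combined with $\bb U(t)\in \mc E(\Gamma)$ from Theorem \ref{well_H1} this forces $\bb U(t)\in\dom(H)$ with $\|\bb U(t)\|_H$ bounded on the interval.

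To close, uniqueness and continuous dependence in $D_H$ follow by applying Gronwall to $\|\bb U(t)-\widetilde{\bb U}(t)\|_H$, where the hypothesis $p\geq 4$ enters through Kato-type estimates that require enough smoothness of $z\mapsto |z|^{p-1}z$ (namely, control of its first two derivatives on bounded sets) to handle the nonlinear term after one time differentiation. The blow-up alternative at $T_H$ is then standard: if $\limsup_{t\to T_H}\|\bb U(t)\|_H<\infty$, the local theory can be restarted slightly before $T_H$ and the solution extended beyond it, contradicting maximality. The core technical difficulty throughout is the incompatibility of $F$ with the $\delta$-vertex condition, which is precisely what the product rule of Section \ref{sec6} is designed to overcome.
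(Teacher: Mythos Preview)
Your route differs from the paper's. The paper runs the contraction directly in $X_H=C([0,T],D_H)\cap C^1([0,T],L^2(\Gamma))$, tracking $\|\bb U\|_H$ and $\|\partial_t\bb U\|_2$ simultaneously. The obstacle you name---that $F(\bb U)\notin\dom(H)$---is handled there not by passing to $\bb W=\partial_t\bb U$, but by the integration-by-parts representation
\[
\mc G(\bb U)(t)=-i(H+m)^{-1}F(\bb U(t))+ie^{-iHt}(H+m)^{-1}F(\bb U_0)+(H+m)^{-1}\{\cdots\},
\]
obtained by applying the product rule of Proposition~\ref{leibniz} to $s\mapsto -i(H+m)^{-1}e^{-iH(t-s)}F(\bb U(s))$ and integrating in $s$. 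Since every term carries an explicit $(H+m)^{-1}$, the $H$-norm of $\mc G(\bb U)$ becomes a sum of $L^2$-norms of quantities built from $F(\bb U)$ and $\partial_s\bb U$, and the contraction closes; $p\ge 4$ enters through the Lipschitz bound for $\partial_s F(\bb U)$ in $L^2$, where one must control $|\bb U|^{p-3}-|\bb V|^{p-3}$ and hence $|\bb U|^{p-4}$.

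Your bootstrap idea---solve the linear $\bb W$-equation in $L^2$, then recover $H\bb U=i\partial_t\bb U+F(\bb U)\in L^2$---can be made to work, but two points in the write-up are off. First, the product rule of Section~\ref{sec6} plays no role in the $\bb W$-equation: that is a plain linear Volterra problem in $L^2$ with bounded coefficient $dF(\bb U(\cdot))$, and solving it needs no commutation device. The product rule is only relevant if one insists on estimating $\|\mc G(\bb U)\|_H$ for an \emph{arbitrary} $\bb U\in X_H$ not yet known to solve the equation, which is the paper's route, not yours. Second, you never say how the $L^2$-object $\bb W$ is identified with $\partial_t\bb U$; from Theorem~\ref{well_H1} one only has $\partial_t\bb U\in C([0,T],\mc E'(\Gamma))$, so an approximation argument or a uniqueness statement for the $\bb W$-equation in $\mc E'$ is still required. (A minor circularity: you invoke $D_H\hookrightarrow L^\infty$ to bound $dF(\bb U)$, but at that stage only $\mc E(\Gamma)\subset H^1(\Gamma)\hookrightarrow L^\infty$ is available.) A cleaner execution of your strategy is to run the fixed point in $C([0,T],\mc E(\Gamma))\cap C^1([0,T],L^2(\Gamma))$ using the differentiated Duhamel formula for $\partial_t\mc G(\bb U)$, and only afterwards conclude $\bb U(t)\in\dom(H)$ from the equation and the form representation of $H$; done this way the product rule is not needed at all.
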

\begin{proof}
 Let $T>0$ to be chosen later. We will use the notation $$X_H=C([0,T], D_H)\cap C^1([0,T], L^2(\Gamma)),$$  and equip  the space $X_H$ with the norm
$$\|\bb U(t)\|_{X_H}=\sup\limits_{t\in[0,T]}\|\bb U(t)\|_H+\sup\limits_{t\in[0,T]}\|\partial_t \bb U(t)\|_2.$$
Consider $$E=\{\bb U(t)\in X_H:\, \bb U(0)=\bb U_0,\,\, \|\bb U(t)\|_{X_H}\leq M \},$$
where $M$ is a positive constant that will be chosen later as well. It is easily seen that  $(E, d)$ is a complete metric space with the  metric  $d(\bb U,\bb V)=\|\bb U-\bb V\|_{X_H}$. 
Now we consider the mapping defined by 
\begin{equation*}\label{duhamel}
\mc H(\bb U)(t)= \mc T(t)\bb U_0+ i\mc G(\bb U)(t),
\end{equation*}
where $\mc T(t)=e^{-iHt}$,\,\,$\mc G(\bb U)(t)=\int\limits_0^te^{-iH(t-s)}|\bb U(s)|^{p-1}\bb U(s)ds$, and  $\bb U\in E,\,\, t\in [0,T]$.

Our aim is to show that $\mc H$ is a contraction of $E$, and then to apply Banach's fixed point theorem. 

\textit{Step 1.}  We will show that $\mc H:\, E\to X_H$.

 {\bf 1.}\,  Recall that $\dom(H)=\{\bb \Psi\in L^2(\Gamma):\, \lim\limits_{h\to 0}h^{-1}(\mc T(h)-I)\bb \Psi\,\,\text{exists} \}.$ It is easily seen that $\bb W(t):=\mc T(t)\bb U_0\in \dom(H)$.  Hence $\partial_t \bb W(t)=-iHe^{-iHt}\bb U_0=-iH\bb W(t)$. Obviously $\partial_t\bb W(t)\in C([0,T], L^2(\Gamma))$ (due to the continuity of the unitary group $\mc T(t)$). The latter implies $$\|H(\bb W(t_n)-\bb W(t))\|_2\underset{t_n\to t}{\longrightarrow} 0,$$ where $t_n,t\in [0,T],$ and consequently $\bb W(t)\in X_H$.

 {\bf 2.}\, The inclusion  $\mc G(\bb U)(t)\in C^1([0,T],L^2(\Gamma))$ follows rapidly.
Indeed, \cite[Lemma 4.8.4]{Caz03} implies that $\partial_t(|\bb U(t)|^{p-1}\bb U(t))\in L^1([0,T], L^2(\Gamma))$, and the formula  
 \begin{equation}\label{well_2a}\partial_t\mc G(\bb U)(t)=ie^{-iHt}|\bb U(0)|^{p-1}\bb U(0)+i\int\limits_0^te^{-iH(t-s)}\partial_s(|\bb U(s)|^{p-1}\bb U(s))ds, 
 \end{equation} from the proof of \cite[Lemma 4.8.5]{Caz03} induces $\mc G(\bb U)(t)\in C^1([0,T],L^2(\Gamma)).$

 {\bf 3.} Below we will show that $\mc G(\bb U)(t)\in C([0,T], D_H)$.   First we need to prove that $\mc G(\bb U)(t)\in\dom(H).$  Note that 
 \begin{equation}\label{well_2c}
\|u|^{p-1}u-|v|^{p-1}v|\leq C(p)(|u|^{p-1}+|v|^{p-1})|u-v|,
\end{equation} 
which implies 
\begin{equation*}\label{well_8a}
\||\bb U|^{p-1}\bb U-|\bb V|^{p-1}\bb V\|_2\leq C_1(p)(\|\bb U\|_\infty^{p-1}+\|\bb V\|_\infty^{p-1})\|\bb U-\bb V\|_2.
\end{equation*}
 Therefore, by the  Gagliardo-Nirenberg inequality \begin{equation}\label{G-N}
\|\bb\Psi\|_q\leq C\|\bb\Psi'\|_2^{\frac{1}{2}-\frac{1}{q}}\|\bb\Psi\|_2^{\frac{1}{2}+\frac{1}{q}},\,\, q\in[2,\infty],\,\, \bb\Psi\in H^1(\Gamma),
\end{equation} for $\bb U, \bb V\in E$ we have
\begin{equation}\label{well_2}
\||\bb U|^{p-1}\bb U-|\bb V|^{p-1}\bb V\|_2\leq C(M)\|\bb U-\bb V\|_2,
\end{equation} 
where $C(M)$ is a positive constant depending on $M$. This implies $|\bb U(t)|^{p-1}\bb U(t)\in C([0,T],L^2(\Gamma))$.

 For $t\in [0, T)$ and $h\in(0, T-t]$ we get
 \begin{equation}\label{well_1}
 \begin{split} &\frac{\mc T(h)-I}{h}\mc G(\bb U)(t)=\frac{1}{h}\int_0^t\mc T(t+h-s)|\bb U(s)|^{p-1}\bb U(s)ds-\frac{1}{h}\int_0^t\mc T(t-s)|\bb U(s)|^{p-1}\bb U(s)ds\\&=\frac{\mc G(\bb U)(t+h)-\mc G(\bb U)(t)}{h}-\frac{1}{h}\int_t^{t+h}\mc T(t+h-s)|\bb U(s)|^{p-1}\bb U(s)ds.
 \end{split}
 \end{equation}
 Letting $h\to 0$, by the Mean Value Theorem, we arrive at  $H\mc G(\bb U)(t)=\mc G(\bb U)'(t)-|\bb U(t)|^{p-1}\bb U(t)$, i.e  we obtain the existence of the limit in \eqref{well_1},   and therefore $\mc G(\bb U)(t)\in \dom(H)$.    This is still true for $t=T$ since operator $H$ is closed.
 Note that we have used differentiability of $\mc G(\bb U)(t)$ proved above.

It remains to prove the continuity of $\mc G(\bb U)(t)$ in $H$-norm.
  We will use the integration by parts formula (it follows from Proposition \ref{leibniz})
 \begin{equation}\label{by_parts}
 \begin{split}
 &\mc G(\bb U)(t)=\int\limits_0^te^{-iH(t-s)}|\bb U(s)|^{p-1}\bb U(s)ds\\&=-i(H+m)^{-1}|\bb U(t)|^{p-1}\bb U(t)+ie^{-iHt}(H+m)^{-1}|\bb U(0)|^{p-1}\bb U(0)\\&+m(H+m)^{-1}\int\limits_0^te^{-iH(t-s)}|\bb U(s)|^{p-1}\bb U(s)ds+\frac{(p+1)i}{2}(H+m)^{-1}\int\limits_0^te^{-iH(t-s)}|\bb U(s)|^{p-1}\partial_s\bb U(s)ds\\&+\frac{(p-1)i}{2}(H+m)^{-1}\int\limits_0^te^{-iH(t-s)}\bb U^2(s)|\bb U(s)|^{p-3}\overline{\partial_s\bb U(s)}ds.
 \end{split}
\end{equation} 
Above we have used the formula
\begin{equation}\label{well_2b}
\begin{split}&\partial_t(|\bb U(t)|^{p-1}\bb U(t))=|\bb U(t)|^{p-1}\partial_t\bb U(t)+(p-1)\bb U(t)|\bb U(t)|^{p-3}\re(\overline{\bb U(t)}\partial_t\bb U(t))\\&=\tfrac{p+1}{2}|\bb U(t)|^{p-1}\partial_t\bb U(t)+\tfrac{p-1}{2}\bb U^2(t)|\bb U(t)|^{p-3}\overline{\partial_t\bb U(t)}.
\end{split}\end{equation}
  
Let $t_n, t\in [0,T]$, and $t_n\to t$. By \eqref{by_parts} we deduce
\begin{equation}\label{well_3}
\begin{split}
&\|\mc G(\bb U)(t)-\mc G(\bb U)(t_n)\|_H\leq \||\bb U(t)|^{p-1}\bb U(t)-|\bb U(t_n)|^{p-1}\bb U(t_n)\|_2\\&+m\int\limits_0^{t}\|\Big(e^{-iH(t-s)}-e^{-iH(t_n-s)}\Big)|\bb U(s)|^{p-1}\bb U(s)\|_2ds+m\left|\int\limits_t^{t_n}\|e^{-iH(t_n-s)}|\bb U(s)|^{p-1}\bb U(s)\|_2ds\right|\\&+\tfrac{p+1}{2}\int\limits_0^{t}\|\Big(e^{-iH(t-s)}-e^{-iH(t_n-s)}\Big)|\bb U(s)|^{p-1}\partial_s\bb U(s)\|_2ds+\tfrac{p+1}{2}\left|\int\limits_t^{t_n}\|e^{-iH(t_n-s)}|\bb U(s)|^{p-1}\partial_s\bb U(s)\|_2ds\right|\\&+\tfrac{p-1}{2}\int\limits_0^{t}\|\Big(e^{-iH(t-s)}-e^{-iH(t_n-s)}\Big)\bb U^2(s)|\bb U(s)|^{p-3}\overline{\partial_s\bb U(s)}\|_2ds\\&+\tfrac{p-1}{2}\left|\int\limits_t^{t_n}\|e^{-iH(t_n-s)}\bb U^2(s)|\bb U(s)|^{p-3}\overline{\partial_s\bb U(s)}\|_2ds\right|. 
\end{split}
\end{equation}
Therefore, using \eqref{well_2},\eqref{well_3}, unitarity and continuity properties of $e^{-iHt}$,
we obtain continuity of $\mc G(\bb U)(t)$ in $D_H$.

\noindent \textit{Step 2.} Now our aim is to choose  $T$ in order to guarantee invariance  of $E$ for the mapping $\mc H$, i.e.  $\mc H: E\to E$.

{\bf 1.} Using \eqref{well_2b}, we obtain  
\begin{equation}\label{well_4}
\begin{split}
&|\bb U(t)|^{p-1}\bb U(t)=\int\limits_0^t\partial_s\left(|\bb U(s)|^{p-1}\bb U(s)\right)ds+|\bb U(0)|^{p-1}\bb U(0)\\&=\int\limits_0^t\left\{\tfrac{p+1}{2}|\bb U(s)|^{p-1}\partial_s\bb U(s)+\tfrac{p-1}{2}\bb U^2(s)|\bb U(s)|^{p-3}\overline{\partial_s\bb U(s)}\right\}ds+|\bb U(0)|^{p-1}\bb U(0).
\end{split}
\end{equation}
Let $\bb U(t)\in E$ and $t\in [0,T]$. Using \eqref{G-N}, \eqref{by_parts}, \eqref{well_4},   and equivalence of $H$- and $H^2$-norms we obtain
\begin{equation}\label{well_5}
\begin{split}
&\|\mc H(\bb U)(t)\|_H\leq \|e^{-iHt}\bb U_0\|_H+\|\int\limits_0^te^{-iH(t-s)}|\bb U(s)|^{p-1}\bb U(s)ds\|_H\\&\leq  \|\bb U_0\|_H+ \|\int\limits_0^t\left\{\tfrac{p+1}{2}|\bb U(s)|^{p-1}\partial_s\bb U(s)+\tfrac{p-1}{2}\bb U^2(s)|\bb U(s)|^{p-3}\overline{\partial_s\bb U(s)}\right\}ds+|\bb U(0)|^{p-1}\bb U(0)\|_2\\&+\||\bb U(0)|^{p-1}\bb U(0)\|_2+m\int\limits_0^t\||\bb U(s)|^{p-1}\bb U(s)\|_2ds+\tfrac{p+1}{2}\int\limits_0^t\||\bb U(s)|^{p-1}\partial_s\bb U(s)\|_2ds\\&+\tfrac{p-1}{2}\int\limits_0^t\|\bb U^2(s)|\bb U(s)|^{p-3}\overline{\partial_s\bb U(s)}\|_2ds\\&\leq \|\bb U_0\|_H+C_1\|\bb U_0\|^p_H+C_2\int\limits_0^t\|\bb U\|_\infty^{p-1}\|\partial_s\bb U(s)\|_2ds+C_3\int\limits_0^t\|\bb U\|_\infty^{p-1}\|\bb U(s)\|_2ds\\&\leq \|\bb U_0\|_H+C_1\|\bb U_0\|^p_H+C_1(M)TM^p.
\end{split}
\end{equation}
{\bf 2.} Below  we will estimate $\|\partial_t\mc H(\bb U)(t)\|_2$.
Observe that 
\begin{equation}\label{well_6}
\|\partial_te^{-iHt}\bb U_0\|_2=\|H\bb U_0\|_2=\|\bb U''_0\|_2\leq \|\bb U_0\|_H.
\end{equation}
Using  \eqref{well_2a}, \eqref{G-N}, \eqref{well_2b}, \eqref{well_6},  we obtain the estimate
\begin{equation}\label{well_7}
\begin{split}
&\|\partial_t\mc H(\bb U)(t)\|_2\leq \|\bb U_0\|_H+\||\bb U(0)|^{p-1}\bb U(0)\|_2+\tfrac{p+1}{2}\int\limits_0^t\||\bb U(s)|^{p-1}\partial_s\bb U(s)\|_2ds\\&+\tfrac{p-1}{2}\int\limits_0^t\|\bb U^2(s)|\bb U(s)|^{p-3}\overline{\partial_s\bb U(s)}\|_2ds\leq \|\bb U_0\|_H+C_4\|\bb U_0\|^p_H+C_5\int\limits_0^t\|\bb U\|_\infty^{p-1}\|\partial_s\bb U(s)\|_2ds\\&\leq \|\bb U_0\|_H+C_4\|\bb U_0\|^p_H+C_2(M)TM^p.
\end{split}
\end{equation}
Finally, combining \eqref{well_5} and \eqref{well_7}, we arrive at
$$\|\mc H(\bb U)(t)\|_{X_H}\leq 2\|\bb U_0\|_H+(C_1+C_4)\|\bb U_0\|^p_H+(C_1(M)+C_2(M))TM^p.$$
We now let
$$\frac{M}{2}=\left(2\|\bb U_0\|_H+(C_1+C_4)\|\bb U_0\|^p_H\right).$$
By choosing  $T\leq\frac{1}{2(C_1(M)+C_2(M))M^{p-1}}$, we get 
$$\|\mc H(\bb U)(t)\|_{X_H}\leq M,$$ and therefore $\mc H: E\to E$.

\noindent\textit{Step 3.} Now we will choose $T$ to guarantee that $\mc H$ is a strict contraction on $(E,d)$. Let $\bb U, \bb V\in E$.

{\bf 1.}
First, observe that  \eqref{well_2c} induces 
\begin{equation}\label{well_8}
\||\bb U|^{p-1}\bb U-|\bb V|^{p-1}\bb V\|_\infty\leq C_2(p)(\|\bb U\|_\infty^{p-1}+\|\bb V\|_\infty^{p-1})\|\bb U-\bb V\|_\infty.
\end{equation}
From \eqref{by_parts}, \eqref{well_4} it follows that 
\begin{equation}\label{well_9}
\begin{split}
&\|\mc H(\bb U)(t)-\mc H(\bb V)(t)\|_H=\|\int\limits_0^te^{-iH(t-s)}\left(|\bb U(s)|^{p-1}\bb U(s)-|\bb V(s)|^{p-1}\bb V(s)\right)ds\|_H\\&\leq
m\int\limits_0^t\||\bb U(s)|^{p-1}\bb U(s)-|\bb V(s)|^{p-1}\bb V(s)\|_2ds+(p+1)\int\limits_0^t\||\bb U(s)|^{p-1}\partial_s\bb U(s)-|\bb V(s)|^{p-1}\partial_s\bb V(s)\|_2ds\\&+(p-1)\int\limits_0^t\|\bb U^2(s)|\bb U(s)|^{p-3}\overline{\partial_s\bb U(s)}-\bb V^2(s)|\bb V(s)|^{p-3}\overline{\partial_s\bb V(s)}\|_2ds.
\end{split}
\end{equation}
To obtain the contraction property we need to estimate two last members of  inequality \eqref{well_9}.
Using convexity of the function $f(x)=x^\alpha, \, \alpha>1,\,x>0,$ one gets
\begin{equation*}\label{well_10}
|u|^{p-1}-|v|^{p-1}\leq (p-1)|u|^{p-2}|u-v|,\quad |u|\geq |v|,
\end{equation*}
and therefore 
\begin{equation}\label{well_11}
||u|^{p-1}-|v|^{p-1}|\leq (p-1)(|u|^{p-2}+|u|^{p-2})|u-v|.
\end{equation}
Using  \eqref{G-N} and \eqref{well_11}, we obtain
\begin{equation}\label{well_12}
\begin{split}
&\||\bb U(s)|^{p-1}\partial_s\bb U(s)-|\bb V(s)|^{p-1}\partial_s\bb V(s)\|_2\leq \||\bb U|^{p-1}(\partial_s\bb U-\partial_s \bb V)\|_2+\|\partial_s\bb V(|\bb U|^{p-1}-|\bb V|^{p-1})\|_2\\&\leq \|\bb U\|_\infty^{p-1}\|\partial_s\bb U-\partial_s \bb V\|_2+\|\partial_s\bb V\|_2\||\bb U|^{p-1}-|\bb V|^{p-1}\|_\infty\\&\leq
C_1M^{p-1}\|\bb U-\bb V\|_{X_H}+C_2M(\|\bb U\|^{p-2}_\infty+\|\bb V\|^{p-2}_\infty)\|\bb U-\bb V\|_\infty\leq C_3M^{p-1}\|\bb U-\bb V\|_{X_H}.
\end{split}
\end{equation}
Let us estimate the last term of \eqref{well_9}. Using  \eqref{well_8} and \eqref{well_11}, we get
\begin{equation}\label{well_13}
\begin{split}
&\|\bb U^2(s)|\bb U(s)|^{p-3}\overline{\partial_s\bb U(s)}-\bb V^2(s)|\bb V(s)|^{p-3}\overline{\partial_s\bb V(s)}\|_2\\&\leq \|\bb U^2|\bb U|^{p-3}(\overline{\partial_s\bb U}-\overline{\partial_s\bb V})\|_2+\|\overline{\partial_s\bb V}(\bb U^2|\bb U|^{p-3}-\bb V^2|\bb V|^{p-3})\|_2\\&\leq\|\bb U\|^{p-1}_\infty\|\partial_s\bb U-\partial_s\bb V\|_2+\|\left(|\bb U|^{p-3}\bb U-|\bb V|^{p-3}\bb V\right)(\bb U+\bb V)\overline{\partial_s\bb V}\|_2\\&+\|\bb U\bb V\left(|\bb U|^{p-3}-|\bb V|^{p-3}\right)\overline{\partial_s\bb V}\|_2\\&\leq C_4M^{p-1}\|\bb U-\bb V\|_{X_H}+\|\partial_s\bb V\|_2\|\bb U+\bb V\|_\infty\||\bb U|^{p-3}\bb U-|\bb V|^{p-3}\bb V\|_\infty\\&+C_5\|\bb U\|_\infty\|\bb V\|_\infty\left(\|\bb U\|^{p-4}_{\infty}
+\|\bb V\|_\infty^{p-4}\right)\|\bb U-\bb V\|_\infty\|\partial_s\bb V\|_2\leq C_6M^{p-1}\|\bb U-\bb V\|_{X_H}.
\end{split}
\end{equation}
Finally, combining \eqref{well_2c},\eqref{well_9},\eqref{well_12},\eqref{well_13}, we obtain
\begin{equation}\label{well_14}
\|\mc H(\bb U)-\mc H(\bb V)\|_H\leq C_7M^{p-1}T\|\bb U-\bb V\|_{X_H}.
\end{equation}

{\bf 2.} To get the contraction property of $\mc H$ we need to estimate $L^2$-part of $X_H$-norm of $\mc H(\bb U)(t)-\mc H(\bb V)(t)$.
From \eqref{well_2a}, we deduce
\begin{equation}\label{well_15}
\|\partial_t\mc H(\bb U)(t)-\partial_t\mc H(\bb V)(t)\|_2\leq \int\limits_0^t\|\partial_s(|\bb U(s)|^{p-1}\bb U(s))-\partial_s(|\bb V(s)|^{p-1}\bb V(s))\|_2ds.
\end{equation}
Using \eqref{well_2b},\eqref{well_12},\eqref{well_13}, from \eqref{well_15}
we get 
\begin{equation}\label{well_16}
\|\partial_t\mc H(\bb U)(t)-\partial_t\mc H(\bb V)(t)\|_2\leq C_8M^{p-1}T\|\bb U-\bb V\|_{X_H}
\end{equation} and finally from \eqref{well_14},\eqref{well_16}, we  obtain
$$\|\mc H(\bb U)(t)-\mc H(\bb V)(t)\|_{X_H}\leq (C_7+C_8)M^{p-1}T\|\bb U-\bb V\|_{X_H}.
$$
Thus, for $$T<\min\left\{\frac{1}{(C_7+C_8)M^{p-1}}, \frac{1}{2(C_1(M)+C_2(M))M^{p-1}}\right\}$$ 
the mapping $\mc H$ is the strict contraction of $(E,d)$. Therefore, by the  Banach fixed point theorem,  $\mc H$ has a unique fixed point $\bb U\in E$ which is a solution of \eqref{NLS_graph_ger}.

Uniqueness of the solution follows in a standard way. Suppose that $\bb U_1(t)$ and $\bb U_2(t)$ are two solutions to \eqref{NLS_graph_ger}, and $\widetilde M=\sup\limits_{t\in[0,T]}\max\{\|\bb U_1(t)\|_H, \|\bb U_2(t)\|_H\}$. Then 
\begin{align*}
&\|\bb U_1(t)-\bb U_2(t)\|_2=\|\int\limits_0^t e^{-iH(t-s)}\Bigl(|\bb U_1(s)|^{p-1}\bb U_1(s)-|\bb U_2(s)|^{p-1}\bb U_2(s)\Bigr)ds\|_2\\ &\leq C(\widetilde M)\int\limits_0^t\|\bb U_1(s)-\bb U_2(s)\|_2ds,
\end{align*}
and the result follows from Gronwall's lemma.
The blow-up alternative can be shown by bootstrap.
\end{proof}
\begin{remark}
\begin{itemize}
\item[$(i)$]
 The assumption $p\geq 4$ is technical. We believe that the result also holds for the smaller values of   $p$ (see \cite[Subsection 4.12]{Caz03}).
 \item[$(ii)$] The idea of the proof of the above theorem was given in \cite{CacFin17} (see Proposition 2.5) without details.
 \end{itemize}
\end{remark}
Below we will show the virial identity which is crucial for the proof of the strong instability.
Define 
\begin{equation}\label{well_16a}
\bb P(\bb V)=\|\bb V'\|_2^2+\frac{\alpha}{2}|v_1(0)|^2-\frac{p-1}{2(p+1)}\|\bb V\|_{p+1}^{p+1},\quad \bb V\in \mc E(\Gamma).
\end{equation}

\begin{proposition}\label{prop-virial} Assume that  $\Sigma(\Gamma)$ is the weighted space given in Notation section.
Let $\bb U_0\in \Sigma(\Gamma)$, 
and let $\bb U(t)$ be the corresponding maximal solution to \eqref{NLS_graph_ger}. 
Then $\bb U(t)\in C([0, T_{H^1}), \Sigma(\Gamma))$, moreover,
the function 
\begin{equation*}\label{well_17}
f(t):=\int\limits_\Gamma x^2|\bb U(t,x)|^2dx
\end{equation*}
belongs to $C^2[0,T_{H^1})$,
\begin{equation}\label{well_17a}
f'(t)=4\im \int\limits_\Gamma x\overline{\bb U}\partial_x\bb Udx,
\end{equation}
and 
\begin{equation}\label{well_18}
f''(t)=8\bb P(\bb U(t)) \quad\text{(virial identity)}
\end{equation}
for all $t\in [0, T_{H^1})$.
\end{proposition}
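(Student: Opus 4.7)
The argument follows the classical regularization scheme of \cite[Chapter 6]{Caz03}, adapted to the presence of the vertex. Approximate $\bb U_0$ by $\bb U_0^{(n)}\in\dom(H)\cap\Sigma(\Gamma)$ with $\bb U_0^{(n)}\to\bb U_0$ in $\mc E(\Gamma)$ and $x\bb U_0^{(n)}\to x\bb U_0$ in $L^2(\Gamma)$; this can be arranged by multiplying $\bb U_0$ by a smooth spatial cutoff equal to $1$ near the vertex (which lands the data in $\Sigma(\Gamma)\cap\mc E(\Gamma)$ with compact support), mollifying on each edge, and adding a small compactly supported correction to enforce the $\delta$-vertex conditions. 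By Theorem~\ref{well_D_H}, each $\bb U_0^{(n)}$ yields $\bb U^{(n)}\in C([0,T^n_H),D_H)\cap C^1([0,T^n_H),L^2(\Gamma))$, and a standard persistence-of-regularity argument combined with Theorem~\ref{well_H1} shows that for every fixed $T<T_{H^1}$ one has $T^n_H>T$ for large $n$ and $\bb U^{(n)}\to\bb U$ in $C([0,T],\mc E(\Gamma))$.

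The core computation is carried out at the regularized level using a truncated weight $\theta_R(x)=x^2\chi(x/R)$ with $\chi\in C_c^\infty([0,\infty))$, $\chi\equiv 1$ near $0$. Setting $f_{n,R}(t):=\int_\Gamma\theta_R|\bb U^{(n)}|^2 dx$, differentiating in $t$, substituting the equation $i\partial_t\bb U^{(n)}=H\bb U^{(n)}-|\bb U^{(n)}|^{p-1}\bb U^{(n)}$ (which holds in $L^2$), and integrating by parts edge-by-edge, all vertex terms vanish because $\theta_R(0)=0$, giving
\begin{equation*}
f'_{n,R}(t)=2\im\int_\Gamma\theta'_R(x)\,\overline{\bb U^{(n)}}\,\partial_x\bb U^{(n)}\,dx.
\end{equation*}
Differentiating once more and integrating by parts twice (the vertex term vanishes at the first step since $\theta'_R(0)=0$ but survives at the second since $\theta''_R(0)=2$), one arrives at
\begin{equation*}
f''_{n,R}(t)=4\int_\Gamma\theta''_R|\partial_x\bb U^{(n)}|^2 dx-\tfrac{2(p-1)}{p+1}\int_\Gamma\theta''_R|\bb U^{(n)}|^{p+1}dx+4\sum_{j=1}^N\re\bigl(\overline{u^{(n)}_j(0)}\,\partial_x u^{(n)}_j(0)\bigr)+\mc R_R(t),
\end{equation*}
where $\mc R_R(t)=2\sum_j\re\int_0^\infty\theta'''_R\,\overline{u^{(n)}_j}\,\partial_x u^{(n)}_j\,dx\to 0$ as $R\to\infty$ (since $\|\theta'''_R\|_\infty=O(1/R)$). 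The $\delta$-conditions $u^{(n)}_1(0)=\cdots=u^{(n)}_N(0)$ and $\sum_j\partial_x u^{(n)}_j(0)=\alpha u^{(n)}_1(0)$ collapse the vertex contribution into $4\alpha|u^{(n)}_1(0)|^2$, and dominated convergence ($\theta''_R\to 2$ with $|\theta''_R|$ uniformly bounded) yields $f''_n(t)=8\bb P(\bb U^{(n)}(t))$.

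The first-derivative identity combined with Cauchy--Schwarz gives $|f'_{n,R}(t)|\leq C(T)\sqrt{f_{n,R}(t)}$ where $C(T)$ is controlled by $\sup_{[0,T]}\|\bb U^{(n)}\|_{\mc E(\Gamma)}$, which is uniformly bounded in $n$; Gronwall and $R\to\infty$ yield the uniform a priori bound $\sup_{[0,T]}\|x\bb U^{(n)}(t)\|_2\leq C(T)$. To conclude, let $n\to\infty$: convergence in $C([0,T],\mc E(\Gamma))$ gives convergence of $\|\partial_x\bb U^{(n)}\|_2^2$, $\|\bb U^{(n)}\|_{p+1}^{p+1}$ (Sobolev embedding on each edge), and $|u^{(n)}_1(0)|^2$ (trace continuity), so that $\bb P(\bb U^{(n)}(\cdot))\to\bb P(\bb U(\cdot))$ uniformly on $[0,T]$; the uniform $\Sigma$-bound together with weak lower semicontinuity identifies $\bb U(t)\in\Sigma(\Gamma)$ on $[0,T_{H^1})$, and writing $f_n(t)=f_n(0)+tf'_n(0)+\int_0^t(t-s)f''_n(s)\,ds$ and passing to the limit delivers $f\in C^2[0,T_{H^1})$ with the announced formulas \eqref{well_17a}--\eqref{well_18}. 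The main obstacle is the identification of the vertex boundary contribution in $f''$: via the Kirchhoff condition it produces exactly $4\alpha|u_1(0)|^2$, and this is the signature of the $\delta$-interaction in the virial identity, generating the $\tfrac{\alpha}{2}|v_1(0)|^2$ term in $\bb P$ that distinguishes the graph case from the classical line computation.
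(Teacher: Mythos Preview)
Your scheme is the same as the paper's (both follow Cazenave's truncation/approximation argument), and the core integration-by-parts computation identifying the vertex contribution $4\alpha|u_1(0)|^2$ is correct. Two limiting steps, however, are not closed as written.

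First, the Gronwall bound $|f'_{n,R}(t)|\le C(T)\sqrt{f_{n,R}(t)}$ amounts to $|\theta'_R(x)|^2\le C\,\theta_R(x)$. With $\theta_R(x)=x^2\chi(x/R)$ and a generic $\chi\in C_c^\infty$, this fails at points where $\chi=0$ but $\chi'\ne 0$ (the edge of $\mathrm{supp}\,\chi$), so the differential inequality you invoke does not hold. The fix is either to take $\chi=\psi^2$ with $\psi\in C_c^\infty$, or to use the Gaussian weight $e^{-\varepsilon x^2}x^2$ as the paper does: then $|\theta'_\varepsilon|/\sqrt{\theta_\varepsilon}=2|1-\varepsilon x^2|e^{-\varepsilon x^2/2}$ is uniformly bounded in $x$ and $\varepsilon$, and the Gronwall step is clean.

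Second, and more importantly, weak lower semicontinuity yields only $\|x\bb U(t)\|_2^2\le\liminf_n f_n(t)$; it does not show that the limit of your Taylor expansion equals $f(t)$. Strong convergence $x\bb U^{(n)}(t)\to x\bb U(t)$ in $L^2(\Gamma)$ is not automatic from the uniform $\Sigma$-bound plus $H^1$-convergence (mass could escape to infinity). The paper avoids this by carrying out the first-derivative step directly for the $H^1$ solution $\bb U$ itself, not only for the approximants: since $e^{-2\varepsilon x^2}x^2\bb U(t)\in\mc E(\Gamma)$ (it vanishes at the vertex), the duality pairing with $\partial_t\bb U(t)\in\mc E'(\Gamma)$ is legitimate, and one obtains $f\in C^1$ with $f'(t)=4\,\im\int_\Gamma x\overline{\bb U}\,\partial_x\bb U\,dx$ for $\bb U$ independently of any approximation. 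The $D_H$-approximation is then invoked only to pass from $f'$ to $f''$, and the final identification $f_n(t)\to f(t)$ is handled (as in \cite[Corollary~6.5.3]{Caz03}) with $f(t)$ already in hand. Your purely approximation-based route can be repaired the same way, but ``weak lower semicontinuity'' alone does not do it.
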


\begin{proof}
 The proof is similar to the one of \cite[Proposition 6.5.1]{Caz03}. 
 We give it for convenience of the reader.

\textit{Step 1.} \,Let $\varepsilon>0$, define $f_\varepsilon(t)=\|e^{-\varepsilon x^2}x\bb U(t)\|^2_2$, for $t\in [0,T], \,\, T\in (0, T_{H^1})$. Then, noting that $e^{-2\varepsilon x^2}x^2\bb U(t)\in H^1(\Gamma)$, we get
\begin{equation}\label{well_19}
\begin{split}
f'_\varepsilon(t)&=2\re\int\limits_\Gamma e^{-2\varepsilon x^2}x^2\overline{\bb U}\partial_t\bb Udx=2\re\int\limits_\Gamma  e^{-2\varepsilon x^2}x^2\overline{\bb U}\left(i\partial_x^2\bb U+i|\bb U|^{p-1}\bb U\right)dx\\&=
-2\im \int\limits_\Gamma  e^{-2\varepsilon x^2}x^2\overline{\bb U}\partial_x^2\bb U dx= 4\im \int\limits_\Gamma \left\{e^{-\varepsilon x^2}(1-2\varepsilon x^2)\right\}\overline{\bb U} xe^{-\varepsilon x^2}\partial_x\bb Udx.
\end{split}
\end{equation}
Observe that $|e^{-\varepsilon x^2}(1-2\varepsilon x^2)|\leq C(\varepsilon)$ for any $x$. From \eqref{well_19}, by the Cauchy-Schwarz inequality, we obtain
\begin{equation}\label{well_20}
\begin{split}
&|f'_\varepsilon(t)|\leq 4\left|\int\limits_\Gamma\left\{e^{-\varepsilon x^2}(1-2\varepsilon x^2)\right\}\overline{\bb U} xe^{-\varepsilon x^2}\partial_x\bb U dx\right|\leq 4C(\varepsilon)\int\limits_\Gamma|e^{-\varepsilon x^2}x\bb U\partial_x\bb U|dx \\&\leq 4C(\varepsilon)\sum\limits_{j=1}^N\|\partial_xu_j\|_2\|e^{-\varepsilon x^2}xu_j\|_2\leq C(\varepsilon,N)\|\bb U\|_{H^1(\Gamma)}\sqrt{f_\varepsilon(t)}.\end{split}
\end{equation}
From \eqref{well_20} one implies
$$\int\limits_0^t\frac{f'_\varepsilon(s)}{\sqrt{f_\varepsilon(s)}}ds\leq C(\varepsilon,N)\int\limits_0^t\|\bb U(s)\|_{H^1(\Gamma)}ds,$$
and therefore 
$$\sqrt{f_\varepsilon(t)}\leq \|x\bb U_0\|_2+\frac{C(\varepsilon,N)}{2}\int\limits_0^t\|\bb U(s)\|_{H^1(\Gamma)}ds,\,\,t\in [0,T].$$
 Letting $\varepsilon\downarrow 0$ and applying  Fatou's lemma, we get that $x\bb U(t)\in L^2(\Gamma)$ and $f(t)$  is bounded in $[0,T].$
 Observe that from \eqref{well_19} one induces
 \begin{equation}\label{well_21}
  f_\varepsilon(t)=f_\varepsilon(0)+4\im\int\limits_0^t\int\limits_\Gamma \left\{e^{-\varepsilon x^2}(1-2\varepsilon x^2)\right\}\overline{\bb U} xe^{-\varepsilon x^2}\partial_x\bb Udx ds. \end{equation}
  We have the following estimates for any positive $x$ and $\varepsilon$:
  \begin{equation}\label{well_22}
  \begin{split}
  &e^{-2\varepsilon x^2}x^2|\bb U(t)|^2\leq x^2|\bb U(t)|^2,\\
  &e^{-2\varepsilon x^2}x^2|\bb U_0|^2\leq x^2|\bb U_0|^2,\\
  & |e^{-\varepsilon x^2}(1-2\varepsilon x^2)\overline{\bb U} xe^{-\varepsilon x^2}\partial_x\bb U|\leq C(\varepsilon)|\partial_x\bb U\|x\bb U|.
  \end{split}
  \end{equation}
  Having pointwise convergence, and using  \eqref{well_22},  by the Dominated Convergence Theorem we get from \eqref{well_21}
  $$f(t)= \|x\bb U(t)\|_2^2= \|x\bb U_0\|_2^2+4 \im\int\limits_0^t\int\limits_\Gamma x\overline{\bb U}\partial_x\bb Udx ds.$$
  Since $\bb U(t)$ is strong $H^1$-solution, $f(t)$ is $C^1$-function, and \eqref{well_17a} holds for any $t\in [0, T_{H^1}).$  
  
  Using continuity of $\|x\bb U(t)\|_2$ and the inclusion $\bb U(t)\in C([0, T_{H^1}), \mc E(\Gamma))$, we get $\bb U(t)\in C([0, T_{H^1}), \Sigma(\Gamma)).$
  
  \textit{Step 2. } Let $\bb U_0\in \dom(H)$. By Theorem \ref{well_D_H}, the solution $\bb U(t)$ to the corresponding Cauchy problem belongs to $C([0,T_H), D_H)\cap C^1([0,T_H), L^2(\Gamma))$. Following the ideas of proofs of \cite[Theorem 5.3.1, Theorem 5.7.1]{Caz03} and using Strichartz estimate from \cite[Theorem 1.3]{BanIgn14}, one can show that $T_{H^1}=T_H$.  
  
  Let $\varepsilon>0$ and $\theta_\varepsilon(x)=e^{-\varepsilon x^2}$. Define 
    \begin{equation}\label{well_22a}
  h_\varepsilon(t)=\im \int\limits_\Gamma\theta_\varepsilon x\overline{\bb U}\partial_x\bb Udx\,\, \text{for}\,\,t\in[0,T],\,\, T\in (0, T_H).\end{equation}
  First, let us show that 
  \begin{equation}\label{well_23}
  h'_\varepsilon(t)=-\im\int\limits_\Gamma\partial_t\bb U\left\{2\theta_\varepsilon x\overline{\partial_x\bb U}+(\theta_\varepsilon+x \theta'_\varepsilon)\overline{\bb U}\right\}dx\end{equation} or equivalently
  \begin{equation}\label{well_24}
   h_\varepsilon(t)= h_\varepsilon(0)-\im\int\limits_0^t\int\limits_\Gamma\partial_s\bb U\left\{2\theta_\varepsilon x\overline{\partial_x\bb U}+(\theta_\varepsilon+x \theta'_\varepsilon)\overline{\bb U}\right\}dx ds.
  \end{equation} 
  Let us prove that identity \eqref{well_24} holds for $\bb U(t)\in C([0,T], H^1(\Gamma))\cap C^1([0,T], L^2(\Gamma)).$ Note that by density argument it is sufficient to show \eqref{well_24} for $\bb U(t)\in C^1([0,T], H^1(\Gamma))\cap C^1([0,T], L^2(\Gamma)).$
  From \eqref{well_22a}, it follows
  \begin{equation}\label{well_24a} h'_\varepsilon(t)=-\im\int\limits_\Gamma\left\{\theta_\varepsilon x\partial_t\bb U\overline{\partial_x\bb U}+\theta_\varepsilon x\bb U\overline{\partial_{xt}^2\bb U}\right\}dx. \end{equation}
  Note that 
  $$\theta_\varepsilon x\bb U\overline{\partial_{xt}^2\bb U}=\theta_\varepsilon x\bb U\overline{\partial_{tx}^2\bb U}=\partial_x\left(\theta_\varepsilon x\bb U\overline{\partial_{t}\bb U}\right)-\theta_\varepsilon \bb U\overline{\partial_{t}\bb U}-\theta_\varepsilon x\partial_x\bb U\overline{\partial_{t}\bb U}-x\theta'_\varepsilon\bb U\overline{\partial_{t}\bb U}, $$
  which induces
  $$
\int\limits_{\Gamma}  \theta_\varepsilon x\bb U\overline{\partial_{xt}^2\bb U}dx=-\int\limits_{\Gamma} \overline{\partial_{t}\bb U}\left\{\theta_\varepsilon(\bb U+x\partial_x\bb U)+x\theta'_\varepsilon\bb U\right\}dx.
$$
Therefore, from \eqref{well_24a}, we get 
$$  h'_\varepsilon(t)=-\im\int\limits_\Gamma\left\{\theta_\varepsilon x\partial_t\bb U\overline{\partial_x\bb U}+\partial_t\bb U\left(\theta_\varepsilon(\overline{\bb U}+x\overline{\partial_x\bb U})+x\theta'_\varepsilon\overline{\bb U}\right)\right\}dx.$$ 
Consequently we obtain \eqref{well_24} for  $\bb U(t)\in C^1([0,T], H^1(\Gamma))\cap C^1([0,T], L^2(\Gamma))$ and hence for $\bb U(t)\in C([0,T], H^1(\Gamma))\cap C^1([0,T], L^2(\Gamma))$ which implies \eqref{well_23}.

 Since $\bb U(t)\in C([0,T_{H}), D_H)$, from \eqref{well_23} we get
 \begin{equation}\label{well_25}
  h'_\varepsilon(t)=-\re\int\limits_\Gamma(-H\bb U+|\bb U|^{p-1}\bb U)\left\{2\theta_\varepsilon x\overline{\partial_x\bb U}+(x\theta_\varepsilon)'\overline{\bb U}\right\}dx.
 \end{equation}
 Below we will consider separately  linear and nonlinear part of identity \eqref{well_25}. Integrating by parts, we obtain
 \begin{equation}\label{well_26}
 \begin{split}
& -\re\int\limits_\Gamma-H\bb U\left\{2\theta_\varepsilon x\overline{\partial_x\bb U}+(x\theta_\varepsilon)'\overline{\bb U}\right\}dx\\&=\alpha|u_1(0)|^2+2\int\limits_\Gamma x\theta'_\varepsilon|\partial_x\bb U|^2dx+\int\limits_\Gamma(2\theta'_\varepsilon+x\theta''_\varepsilon)\re(\overline{\bb U}\partial_x\bb U)dx+2\int\limits_\Gamma \theta_\varepsilon|\partial_x\bb U|^2dx,
 \end{split}
 \end{equation}
  and
  \begin{equation}\label{well_27} 
  \begin{split}
  & -\re\int\limits_\Gamma|\bb U|^{p-1}\bb U\left\{2\theta_\varepsilon x\overline{\partial_x\bb U}+(x\theta_\varepsilon)'\overline{\bb U}\right\}dx\\&=-\int\limits_\Gamma|\bb U|^{p+1}\theta_\varepsilon dx-\int\limits_\Gamma|\bb U|^{p+1}x\theta'_\varepsilon dx-\int\limits_\Gamma(|\bb U|^2)^{\frac{p-1}{2}}\partial_x(|\bb U|^2)x\theta_\varepsilon  dx\\&=-\frac{p-1}{p+1}\int\limits_\Gamma|\bb U|^{p+1}\theta_\varepsilon dx-\frac{p-1}{p+1}\int\limits_\Gamma|\bb U|^{p+1}x\theta'_\varepsilon dx. 
  \end{split}
  \end{equation}
  Finally, from \eqref{well_25}-\eqref{well_27} we get 
  \begin{equation*}
  \begin{split}
 & h'_\varepsilon(t)=\left[2\int\limits_\Gamma\theta_\varepsilon|\partial_x\bb U|^2dx +\alpha|u_1(0)|^2-\frac{p-1}{p+1}\int\limits_\Gamma|\bb U|^{p+1}\theta_\varepsilon dx\right]\\&+\left[2\int\limits_\Gamma x\theta'_\varepsilon|\partial_x\bb U|^2dx+\int\limits_\Gamma(2\theta'_\varepsilon+x\theta''_\varepsilon)\re(\overline{\bb U}\partial_x\bb U)dx\right]-\frac{p-1}{p+1}\int\limits_\Gamma|\bb U|^{p+1}x\theta'_\varepsilon dx. 
  \end{split}
  \end{equation*}
  Since $\theta_\varepsilon,\, \theta'_\varepsilon,\, x\theta'_\varepsilon,\, x\theta''_\varepsilon$ are bounded with respect to $x$ and $\varepsilon$, and 
   $$ \theta_\varepsilon\to 1,\,\, \theta'_\varepsilon\to 0,\,\,x\theta'_\varepsilon\to 0,\,\,x\theta''_\varepsilon\to 0\,\, \text{pointwise as}\,\, \varepsilon\downarrow 0, $$
   by the Dominated Convergence Theorem we have
   $$\lim\limits_{\varepsilon\downarrow 0}h'_\varepsilon(t)=2\|\partial_x\bb U\|_2^2+\alpha|u_1(0)|^2-\frac{p-1}{p+1}\|\bb U\|_{p+1}^{p+1}=:g(t).$$
      Moreover,  again by the Dominated Convergence Theorem, 
      $$\lim\limits_{\varepsilon\downarrow 0}h_\varepsilon(t)=\im \int\limits_\Gamma x\overline{\bb U}\partial_x\bb Udx=:h(t).$$
      Using continuity of $g(t)$ and the fact that operator $A=\dfrac{d}{dt}$ in  the space $C[0,T]$ with $\dom(A)=C^1[0,T]$  is closed, we arrive at  $h'(t)=g(t),\,\, t\in[0,T]$, i.e.
$$ h'(t)= 2\|\partial_x\bb U\|_2^2+\alpha|u_1(0)|^2-\frac{p-1}{p+1}\|\bb U\|_{p+1}^{p+1},$$
and $h(t)$ is $C^1$ function. Finally, \eqref{well_18} holds for $\bb U_0\in \dom(H)$.

 To conclude the proof   consider $\{\bb U_0^n\}_{n\in \mathbb{N}}\subset \dom(H)$ such that $\bb  U_0^n\to \bb U_0$ in $H^1(\Gamma)$ and $x\bb U_0^n\to x\bb U_0$ in $L^2(\Gamma)$ as $n\to \infty$. Let $\bb U^n(t)$ be the maximal solutions of the corresponding Cauchy problem associated with \eqref{NLS_graph_ger}.
 From \eqref{well_17a} and \eqref{well_18} we obtain 
 $$\|x\bb U^n(t)\|_2^2=\|x\bb U^n_0\|_2^2+4t\im \int\limits_\Gamma x\overline{\bb U_0^n}\partial_x\bb U_0^ndx+\int_0^t\int_0^s8P(\bb U^n(y))dyds.$$
Using continuous dependence and repeating the arguments from \cite[Corollary 6.5.3]{Caz03}, we obtain as $n\to \infty$
$$\|x\bb U(t)\|_2^2=\|x\bb U_0\|_2^2+4t\im \int\limits_\Gamma x\overline{\bb U_0}\partial_x\bb U_0dx+\int_0^t\int_0^s8P(\bb U(y))dyds,$$ that is \eqref{well_18} holds for  $\bb U_0\in \mc E(\Gamma)$.
 \end{proof}
\begin{remark}
In \cite{CozFuk08} the authors proved the virial identity for the NLS equation with $\delta$-potential on the line using approximation of $\delta$-potential by smooth potentials $V_{\varepsilon}(x) =\tfrac1{\varepsilon} e^{-\pi\tfrac1{\varepsilon^2} x^2},\, \varepsilon\to 0$, and applying the virial identity to the NLS equation on $\mathbb{R}$ with the smooth potential (which is classical). Observe that in the present paper we overcome this procedure by proving the well-posedness in $D_H$. Obviously our proof can be repeated for  the NLS equation with $\delta$-potential on the line.
\end{remark}

\section{Variational analysis}\label{sec3}

Define the following action functional
\begin{equation}\label{well_27b}
\bb S_\omega(\bb V)
=\frac{1}{2}\| \bb V' \|_2^2
+\frac{\omega}{2} \| \bb V \|_2^2
-\frac{1}{p+1} \|\bb V \|_{p+1}^{p+1}
+\frac{\alpha}{2}|v_1(0)|^2.
\end{equation}

We also introduce
\begin{equation*}
\bb I_\omega(\bb V)
=\| \bb V' \|_2^2+\omega \|\bb V \|_2^2-\| \bb V \|_{p+1}^{p+1}
+\alpha |v_1(0)|^2.
\end{equation*}

Observe that
$$\bb I_\omega(\bb V)=\partial_{\la}\bb S_\omega(\la\bb V)|_{\la=1}
=\langle \bb S'_\omega(\bb V),\bb V\rangle,$$ and
\begin{equation}\label{var_1}
\bb S_\omega(\bb V)=\frac{1}{2}\bb I_\omega(\bb V)+\frac{p-1}{2(p+1)}\|\bb V\|_{p+1}^{p+1}.
\end{equation}
  In \cite{AdaNoj14} it was shown that for any $p>1$ there is  $\alpha^*<0$ such that for $-N\sqrt{\omega}<\alpha<\alpha^*$ the  profile  $\mathbf{\Phi}^\alpha_{0}$ defined by \eqref{Phi_k} minimizes the action functional $\bb S_\omega$
  on the  Nehari manifold
 $$
 \mathcal N=\{\mathbf{V}\in \mc E(\Gamma)\setminus\{0\}: \bb I_\omega(\bb V)=0\}.
 $$
 Namely, the  profile $\mathbf{\Phi}^\alpha_{0}$ is the ground state  for the action $\bb S_\omega$ on the manifold $\mathcal N$.
In \cite{AdaNoj16} the authors showed that $\bb \Phi_0^\alpha$ is a local minimizer of the energy functional  $\bb E$ defined by \eqref{energy} among functions with equal fixed  mass.
\begin{remark}
Note that  $\mathbf{\Phi}^\alpha_{k}\in \mathcal{N}$ for all $k$. In \cite{AdaNoj14} it was proved that for $k\neq 0$ and $\alpha<0$ we have 
$\bb S_\omega (\mathbf{\Phi}^\alpha_{0})<\bb S_\omega(\mathbf{\Phi}^\alpha_{k})
<\bb S_\omega(\mathbf{\Phi}^\alpha_{k+1})$.
\end{remark}
   
Until now nothing is known about variational properties of  the profiles 
$\mathbf{\Phi}^\alpha_k$ for $\alpha>0$.  Anyway, one can easily verify that 
$\bb S_\omega (\mathbf{\Phi}^\alpha_{0})>\bb S_\omega(\mathbf{\Phi}^\alpha_{k})
>\bb S_\omega(\mathbf{\Phi}^\alpha_{k+1})$, \,$k\neq 0$.
 
We consider three minimization problems
\begin{equation} \label{def-deq} 
d_{\eq}(\omega)=\inf\{\bb S_\omega(\bb V):\, \bb V\in \EE_{\eq}(\Gamma)\setminus\{0\}, \, 
\bb I_\omega(\bb V)=0 \}, 
\end{equation}
   and
    \begin{equation}\label{well_27c}
        d^{\li}_{\rad}(\omega)=\inf\left\{\begin{array}{c}\tfrac{1}{2}\|v'\|_2^2+\tfrac{\omega}{2}\|v\|_2^2-\frac{1}{p+1}\|v\|_{p+1}^{p+1}+\tfrac{\alpha}{N}|v(0)|^2:\\ \|v'\|_2^2+\omega\|v\|_2^2-\|v\|_{p+1}^{p+1}+\tfrac{2\alpha}{N}|v(0)|^2=0, \,v\in H^1_{\rad}(\mathbb{R})\setminus\{0\}    
    \end{array}\right\}, 
        \end{equation}
         \begin{equation*}
        d^{\half}(\omega)=\inf\left\{\begin{array}{c}\tfrac{1}{2}\|v'\|_2^2+\tfrac{\omega}{2}\|v\|_2^2-\frac{1}{p+1}\|v\|_{p+1}^{p+1}+\tfrac{\alpha}{2N}|v(0)|^2:\\ \|v'\|_2^2+\omega\|v\|_2^2-\|v\|_{p+1}^{p+1}+\tfrac{\alpha}{N}|v(0)|^2=0, \,v\in H^1(\mathbb{R}_+)\setminus\{0\}    
    \end{array}\right\}. 
        \end{equation*}
It is easily seen that 
$$d_{\eq}(\omega)=N d^{\half}(\omega)=\tfrac{N}{2} d^{\li}_{\rad}(\omega).$$
From  the results by \cite{FukJea08, FukOht08} one gets
\begin{equation}\label{well_28} 
d_{\eq}(\omega)=\bb S_\omega(\bb\Phi_0^\alpha)=\tfrac{N}{2} d^{\li}_{\rad}(\omega)=\tfrac{N}{2}\left(\tfrac{1}{2}\|\phi'_\omega\|_2^2+\tfrac{\omega}{2}\|\phi_\omega\|_2^2-\tfrac{1}{p+1}\|\phi_\omega\|_{p+1}^{p+1}+\tfrac{\alpha}{N}|\phi_\omega(0)|^2\right),
\end{equation}
where
        $$\phi_\omega(x)=\left[\frac{p+1}{2}\omega\sech^2\left(\frac{p-1}{2}\sqrt{\omega}|x|-\tanh^{-1}\left(\frac{\alpha}{N\sqrt{\omega}}\right)\right)\right]^{\frac{1}{p-1}}.$$
Using \eqref{var_1}, we obtain the following useful formula
\begin{equation}\label{well_29}
d_{\eq}(\omega)=\bb S_\omega(\bb\Phi_0^\alpha)=\inf\left\{\frac{p-1}{2(p+1)}\|\bb V\|_{p+1}^{p+1}:\, \bb V\in \EE_{\eq}(\Gamma)\setminus\{0\}, \, 
\bb I_\omega(\bb V)=0 \right\}. 
\end{equation}
In the sequel for simplicity we will always use the notation $\bb\Phi(x):=\bb\Phi_0^\alpha(x).$

        \begin{remark}
        Note that in the case $\alpha=0$ one arrives at analogous result, that is
         \begin{equation*} d_{\eq}^0(\omega)=\bb S_\omega^0(\bb\Phi_0^0)=\tfrac{N}{2} d^{\li,0}_{\rad}(\omega)=\tfrac{N}{2}\left(\tfrac{1}{2}\|\phi'_{\omega,0}\|_2^2+\tfrac{\omega}{2}\|\phi_{\omega,0}\|_2^2-\tfrac{1}{p+1}\|\phi_{\omega,0}\|_{p+1}^{p+1}\right),\end{equation*}
        where
        $$\phi_{\omega,0}(x)=\left[\frac{p+1}{2}\omega\sech^2\left(\frac{p-1}{2}\sqrt{\omega}x\right)\right]^{\frac{1}{p-1}}, \, x\in\mathbb{R},\quad \bb\Phi_0^0(x)=(\phi_{\omega,0}(x))_{j=1}^N,\, x\in\mathbb{R_+},
      $$
      and $d_{\eq}^0(\omega),\bb S_\omega^0, d^{\li,0}_{\rad}(\omega)$ correspond to the case $\alpha=0$ in \eqref{well_27b},\eqref{well_27c},\eqref{well_28}.
        \end{remark}

\section{Proof of strong instability results}\label{sec4} 

\subsection{Proof of Theorem \ref{alpha>0}}
The proof of theorem relies on the following three lemmas. 
\begin{lemma}\label{lem-dsp01} Let the functional $\bb P(\bb V)$ be defined by \eqref{well_16a}.
If $\bb V\in \EE_{\eq} (\Gamma)\setminus\{0\}$ 
satisfies $\bb P(\bb V)\le 0$, then 
\[
d_{\eq}(\omega) 
\le \bb S_\omega(\bb V)-\frac{1}{2} \bb P(\bb V). 
\]
\end{lemma}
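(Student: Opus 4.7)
The approach rests on the $L^2$-preserving scaling $\bb V_\lambda(x) = \lambda^{1/2}\bb V(\lambda x)$, $\lambda>0$, which preserves $\EE_{\eq}(\Gamma)$ and obeys the key identity
\[
\partial_\lambda \bb S_\omega(\bb V_\lambda) = \lambda^{-1}\bb P(\bb V_\lambda).
\]
This is precisely the reason the virial functional $\bb P$ appears here: it is the scaling derivative of the action along this orbit.

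The first step is to produce a point on the Nehari manifold along this orbit. Set
\[
f(\lambda)=\bb I_\omega(\bb V_\lambda)=\lambda^2\|\bb V'\|_2^2+\omega\|\bb V\|_2^2-\lambda^{(p-1)/2}\|\bb V\|_{p+1}^{p+1}+\alpha\lambda|v_1(0)|^2.
\]
Then $f(0^+)=\omega\|\bb V\|_2^2>0$ and $f(\lambda)\to-\infty$ as $\lambda\to\infty$: for $p>5$ this is immediate since $(p-1)/2>2$, while in the critical case $p=5$ the hypothesis $\bb P(\bb V)\le 0$ combined with $\bb V\in H^1(\Gamma)\setminus\{0\}$ (together with $\alpha>0$) forces $\|\bb V\|_{p+1}^{p+1}>\|\bb V'\|_2^2$, and $f\to -\infty$ again. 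The intermediate value theorem supplies $\lambda_0>0$ with $\bb I_\omega(\bb V_{\lambda_0})=0$; since $\bb V_{\lambda_0}\in \EE_{\eq}(\Gamma)\setminus\{0\}$, the definition of $d_{\eq}(\omega)$ yields $d_{\eq}(\omega)\le\bb S_\omega(\bb V_{\lambda_0})$.

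The second step is an algebraic reduction of $\bb S_\omega(\bb V_{\lambda_0})\le \bb S_\omega(\bb V)-\tfrac12\bb P(\bb V)$. Using the Nehari identity and \eqref{var_1}, $\bb S_\omega(\bb V_{\lambda_0})=\tfrac{p-1}{2(p+1)}\lambda_0^{(p-1)/2}\|\bb V\|_{p+1}^{p+1}$; a direct calculation also gives $\bb S_\omega(\bb V)-\tfrac12\bb P(\bb V)=\tfrac{\omega}{2}\|\bb V\|_2^2+\tfrac{\alpha}{4}|v_1(0)|^2+\tfrac{p-5}{4(p+1)}\|\bb V\|_{p+1}^{p+1}$. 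I would eliminate $\omega\|\bb V\|_2^2$ via $f(\lambda_0)=0$, and then invoke $\bb P(\bb V)\le 0$, equivalently $(p-1)\|\bb V\|_{p+1}^{p+1}\ge 2(p+1)(\|\bb V'\|_2^2+\alpha|v_1(0)|^2/2)$, to bound $\|\bb V\|_{p+1}^{p+1}$ from below. This lower bound may be applied because the coefficient $(p-5)+4\lambda_0^{(p-1)/2}$ of $\|\bb V\|_{p+1}^{p+1}$ that emerges on the right-hand side is non-negative for $p\ge 5$. The desired inequality then collapses to
\[
\|\bb V'\|_2^2\cdot h(\lambda_0)+\alpha|v_1(0)|^2\cdot r(\lambda_0)\ge 0,
\]
where $h(\lambda)=(p-5)+4\lambda^{(p-1)/2}-(p-1)\lambda^2$ and $r(\lambda)=(p-3)+2\lambda^{(p-1)/2}-(p-1)\lambda$.

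Finally, I would verify $h,r\ge 0$ on $(0,\infty)$ by elementary calculus: both vanish at $\lambda=1$, and for $p\ge 5$ their derivatives
\[
h'(\lambda)=2(p-1)\lambda\bigl[\lambda^{(p-5)/2}-1\bigr],\qquad r'(\lambda)=(p-1)\bigl[\lambda^{(p-3)/2}-1\bigr]
\]
have the sign of $\lambda-1$, so $\lambda=1$ is the global minimum of each. Since $\alpha>0$ in the setting of Theorem \ref{alpha>0}, the above sum is non-negative and the lemma follows. The main obstacle I anticipate is the bookkeeping in the algebraic reduction, in particular the order of substitutions (Nehari first, then $\bb P(\bb V)\le 0$) and the sign of the coefficient $(p-5)+4\lambda_0^{(p-1)/2}$ that licenses the lower bound on $\|\bb V\|_{p+1}^{p+1}$; the hypothesis $p\ge 5$ enters precisely through this sign.
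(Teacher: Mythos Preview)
Your proof is correct and shares the paper's opening move: the $L^2$-scaling $\bb V^\lambda(x)=\lambda^{1/2}\bb V(\lambda x)$, the limits of $\bb I_\omega(\bb V^\lambda)$ at $0^+$ and $+\infty$, and the resulting Nehari point $\lambda_0$ with $d_{\eq}(\omega)\le \bb S_\omega(\bb V^{\lambda_0})$.

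Where you diverge is in closing the gap from $\bb S_\omega(\bb V^{\lambda_0})$ to $\bb S_\omega(\bb V)-\tfrac12\bb P(\bb V)$. You eliminate $\omega\|\bb V\|_2^2$ via the Nehari relation, invoke $\bb P(\bb V)\le 0$ as a lower bound on $\|\bb V\|_{p+1}^{p+1}$, and reduce to the nonnegativity of two auxiliary functions $h,r$ on $(0,\infty)$. The paper instead packages the whole comparison into a single function
\[
g(\lambda)=\bb S_\omega(\bb V^\lambda)-\tfrac{\lambda^2}{2}\bb P(\bb V)
=\tfrac{2\lambda-\lambda^2}{4}\alpha|v_1(0)|^2+\tfrac{\beta\lambda^2-2\lambda^\beta}{2(p+1)}\|\bb V\|_{p+1}^{p+1}+\tfrac{\omega}{2}\|\bb V\|_2^2,
\]
notes that for $\beta=(p-1)/2\ge 2$ and $\alpha>0$ this is maximized at $\lambda=1$, and chains
$d_{\eq}(\omega)\le \bb S_\omega(\bb V^{\lambda_0})\le g(\lambda_0)\le g(1)$. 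Your reduction is in fact a disguised proof of $g(\lambda_0)\le g(1)$: your $h$ and $r$ are (up to positive multiples) the second differences of $\beta\lambda^2-2\lambda^\beta$ and $2\lambda-\lambda^2$ against $\lambda=1$. The paper's version is shorter and avoids the intermediate elimination of $\omega\|\bb V\|_2^2$ and the sign check on $(p-5)+4\lambda_0^{(p-1)/2}$; your version makes the role of $p\ge 5$ and $\alpha>0$ completely explicit at the algebraic level.
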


\begin{proof}
Let $\bb V\in \EE_{\eq} (\Gamma)\setminus\{0\}$ 
satisfy $\bb P(\bb V)\le 0$. 
Define 
$\bb V^{\lambda}(x)
=\lambda^{1/2} \bb V(\lambda x)$ for $\lambda>0$, 
and consider the function 
\[
(0,\infty)\ni \lambda \mapsto 
\bb I_{\omega} (\bb V^{\lambda})
=\lambda^2 \|\bb V'\|_2^2
+\alpha \lambda |v_1(0)|^2 
-\lambda^{\beta} \| \bb V \|_{p+1}^{p+1}
+\omega \| \bb V \|_2^2, 
\]
where we put $\beta=\frac{p-1}{2}\ge 2$. 
Then, we have 
\begin{equation} \label{fg01}
\lim_{\lambda\to +0} \bb I_{\omega} (\bb V^{\lambda})
=\omega \| \bb V \|_2^2>0, \quad 
\lim_{\lambda\to +\infty} \bb I_{\omega} (\bb V^{\lambda})=-\infty. 
\end{equation}

By \eqref{fg01}, there exists $\lambda_0\in (0,\infty)$ such that 
$\bb I_{\omega} (\bb V^{\lambda_0})=0$. 
Then, by  definition \eqref{def-deq}, we have 
$d_{\eq}(\omega) \le \bb S_\omega(\bb V^{\lambda_0})$. 

Moreover, since $\beta\ge 2$, the function 
\[
(0,\infty)\ni \lambda \mapsto 
\bb S_{\omega} (\bb V^{\lambda})-\frac{\lambda^2}{2} \bb P(\bb V)
=\frac{2\lambda-\lambda^2}{4} \alpha |v_1(0)|^2 
+\frac{\beta \lambda^2-2\lambda^{\beta}}{2(p+1)}\| \bb V \|_{p+1}^{p+1}
+\frac{\omega}{2} \| \bb V \|_2^2
\]
attains its maximum at $\lambda=1$.   Indeed, to show this it is sufficient to study the derivative of  the function $f(\lambda):=\bb S_{\omega} (\bb V^{\lambda})-\frac{\lambda^2}{2} \bb P(\bb V)$. 
Thus, by using $\bb P(\bb V)\le 0$, we have 
\[
d_{\eq}(\omega)
\le \bb S_\omega(\bb V^{\lambda_0})
\le \bb S_\omega(\bb V^{\lambda_0})-\frac{\lambda_0^2}{2} \bb P(\bb V)
\le \bb S_\omega(\bb V)-\frac{1}{2} \bb P(\bb V). 
\]
This completes the proof. 
\end{proof}

We introduce
\[
\mc B_{\omega}^{+}
:=\{\bb V\in \mc E_{\eq}(\Gamma):\,\bb S_\omega(\bb V)< d_{\eq}(\omega), \, 
\bb P(\bb V)<0 \}.
\]
Upper index $+$ means that we consider the case of positive $\alpha$.
\begin{lemma}\label{lem-dsp02}
The set $\mc B_{\omega}^+$ is invariant under the flow of \eqref{NLS_graph_ger}. 
That is,  if  $\bb U_0\in \mc B_{\omega}^{+}$, 
then the solution  $\bb U(t)$ to \eqref{NLS_graph_ger} with $\bb U(0)=\bb U_0$ 
belongs to $\mc B_{\omega}^+$  for all $t\in [0, T_{H^1})$. 
\end{lemma}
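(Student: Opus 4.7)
The plan is to verify invariance of $\mc B_\omega^+$ along the three defining conditions separately: that the components stay equal, that the action stays below $d_{\eq}(\omega)$, and that $\bb P$ stays strictly negative. The first two follow easily, and the third is the only place where Lemma \ref{lem-dsp01} is used, together with a continuity/contradiction argument.

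First I would verify that $\mc E_{\eq}(\Gamma)$ is preserved by the flow. If $\bb U_0=(u_0,\ldots,u_0)\in \EE_{\eq}(\Gamma)$ and $\bb U(t)=(u_j(t))_{j=1}^N$ is the unique maximal $H^1$-solution provided by Theorem \ref{well_H1}, then for any permutation $\sigma$ of $\{1,\ldots,N\}$ the function $\widetilde{\bb U}(t):=(u_{\sigma(j)}(t))_{j=1}^N$ also solves \eqref{NLS_graph_ger} with the same initial datum, because \eqref{NLS_graph_ger} and the boundary conditions defining $\dom(H)$ are symmetric under edge permutations. Uniqueness in $\EE(\Gamma)$ forces $\widetilde{\bb U}(t)=\bb U(t)$ for every $\sigma$, hence $u_1(t)=\cdots=u_N(t)$, i.e.\ $\bb U(t)\in\EE_{\eq}(\Gamma)$ on $[0,T_{H^1})$.

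Next, the action $\bb S_\omega$ is conserved along the flow: since $\bb S_\omega(\bb V)=\bb E(\bb V)+\frac{\omega}{2}\|\bb V\|_2^2$, the two conservation laws in \eqref{conservation-laws} give
\[
\bb S_\omega(\bb U(t))=\bb S_\omega(\bb U_0)<d_{\eq}(\omega),\qquad t\in[0,T_{H^1}),
\]
which takes care of the second defining condition of $\mc B_\omega^+$.

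Finally, to show that $\bb P(\bb U(t))<0$ persists, I argue by contradiction. Since $\bb U\in C([0,T_{H^1}),\EE(\Gamma))$, the map $t\mapsto \bb P(\bb U(t))$ is continuous (the boundary term $|u_1(t,0)|^2$ is continuous by the Sobolev trace, and the $L^{p+1}$ term by the embedding $H^1(\Gamma)\hookrightarrow L^{p+1}(\Gamma)$). If $\bb P(\bb U(t))<0$ failed somewhere, then by the intermediate value theorem there would exist $t_0\in(0,T_{H^1})$ with $\bb P(\bb U(t_0))=0$; mass conservation and $\bb U_0\neq 0$ give $\bb U(t_0)\neq 0$, and by the first step $\bb U(t_0)\in\EE_{\eq}(\Gamma)\setminus\{0\}$. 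Lemma \ref{lem-dsp01} then yields
\[
d_{\eq}(\omega)\le \bb S_\omega(\bb U(t_0))-\tfrac{1}{2}\bb P(\bb U(t_0))=\bb S_\omega(\bb U_0)<d_{\eq}(\omega),
\]
a contradiction. Therefore $\bb P(\bb U(t))<0$ for all $t\in[0,T_{H^1})$, and $\bb U(t)\in \mc B_\omega^+$ throughout the maximal existence interval. The only nonroutine step is the invariance of $\EE_{\eq}(\Gamma)$, and even there the symmetry-plus-uniqueness trick is standard; I do not anticipate any genuine obstacle.
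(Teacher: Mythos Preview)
Your proof is correct and follows essentially the same route as the paper's: conservation of $\bb S_\omega$ via \eqref{conservation-laws}, then a continuity/contradiction argument invoking Lemma \ref{lem-dsp01} to keep $\bb P$ negative. The only difference is cosmetic: where the paper cites \cite[Theorem 3.4]{AngGol18} for the invariance of $\EE_{\eq}(\Gamma)$, you supply the standard permutation-symmetry-plus-uniqueness argument directly.
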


\begin{proof}
First, by \cite[Theorem 3.4]{AngGol18}, 
we have $\bb U(t)\in \mc E_{\eq}(\Gamma)$ for all  $t\in [0,T_{H^1})$. 
Further, 
by  conservation laws \eqref{conservation-laws}, 
for all $t\in [0,T_{H^1})$, we have 
\[
\bb S_{\omega}(\bb U(t))
=\bb E(\bb U(t))+\frac{\omega}{2} \|\bb U(t)\|_{2}^2
=\bb S_{\omega}(\bb U_0) <d_{\eq}(\omega). 
\]

Next, we prove that $\bb P(\bb U(t))<0$ for all $t\in [0,T_{H^1})$. 
Suppose that this were not true. 
Then, there exists $t_0\in (0,T_{H^1})$ such that $\bb P(\bb U(t_0))=0$. 
Moreover, since $\bb U(t_0)\ne 0$, it follows from Lemma \ref{lem-dsp01} that 
\[
d_{\eq}(\omega)\le \bb S_{\omega}(\bb U(t_0))-\frac{1}{2} \bb P(\bb U(t_0))
=\bb S_{\omega}(\bb U(t_0)).
\]
This contradicts the fact that 
$\bb S_{\omega}(\bb U(t))<d_{\eq}(\omega)$ for all $t\in [0,T_{H^1})$. 
Hence, we have $\bb P(\bb U(t))<0$ for all $t\in [0,T_{H^1})$. 
\end{proof}

\begin{lemma}\label{lem-dsp03}
If $\bb U_0\in \mc B_{\omega}^+\cap \Sigma (\Gamma)$, 
then the solution  $\bb U(t)$ to \eqref{NLS_graph_ger} with $\bb U(0)=\bb U_0$ 
blows up in finite time. 
\end{lemma}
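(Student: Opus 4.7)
The plan is the standard concavity argument built on the virial identity \eqref{well_18} from Proposition \ref{prop-virial}, combined with the two preceding lemmas. Since $\bb U_0\in\Sigma(\Gamma)$, Proposition \ref{prop-virial} guarantees that the function $f(t)=\int_\Gamma x^2|\bb U(t,x)|^2 dx$ lies in $C^2[0,T_{H^1})$ and satisfies $f''(t)=8\bb P(\bb U(t))$. The goal is therefore to show that $\bb P(\bb U(t))$ stays bounded above by a strictly negative constant on the whole maximal interval of existence; then $f$ is concave and decays at least quadratically, which forces $T_{H^1}<\infty$ by positivity of $f$ together with the blow-up alternative of Theorem \ref{well_H1}.

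First I would use Lemma \ref{lem-dsp02} to say that $\bb U(t)\in \mc B_\omega^+$ for every $t\in[0,T_{H^1})$. In particular $\bb U(t)\in\EE_{\eq}(\Gamma)\setminus\{0\}$ with $\bb P(\bb U(t))<0$, so the hypothesis of Lemma \ref{lem-dsp01} is met at every time. Applying that lemma yields
\begin{equation*}
\bb P(\bb U(t))\le 2\bigl(\bb S_\omega(\bb U(t))-d_{\eq}(\omega)\bigr)\qquad \text{for all }t\in[0,T_{H^1}).
\end{equation*}
Combining this with the conservation law $\bb S_\omega(\bb U(t))=\bb S_\omega(\bb U_0)$ (which follows from \eqref{conservation-laws}) and setting $\delta:=2\bigl(d_{\eq}(\omega)-\bb S_\omega(\bb U_0)\bigr)>0$, I obtain the uniform bound $\bb P(\bb U(t))\le -\delta$.

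Feeding this into the virial identity gives $f''(t)\le -8\delta<0$ on $[0,T_{H^1})$. Integrating twice yields
\begin{equation*}
f(t)\le f(0)+f'(0)\,t-4\delta\, t^2,
\end{equation*}
so the right-hand side becomes negative for some finite $t_*>0$ depending only on $f(0)$, $f'(0)$ from \eqref{well_17a}, and $\delta$. Since $f(t)\ge 0$ for as long as the solution exists, we must have $T_{H^1}\le t_*<\infty$, and then the blow-up alternative of Theorem \ref{well_H1} forces $\|\bb U(t)\|_{H^1(\Gamma)}\to\infty$ as $t\to T_{H^1}$.

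I do not expect any genuine obstacle: the nontrivial work (invariance of $\mc B_\omega^+$ under the flow, the $\bb E_{\eq}$-preservation used inside Lemma \ref{lem-dsp02}, and the virial identity itself for merely $H^1$ data) has already been carried out. The only small care points are (i) verifying that the dilated profiles used in Lemma \ref{lem-dsp01} indeed keep $\bb U(t)$ in $\EE_{\eq}(\Gamma)$ so that Lemma \ref{lem-dsp01} applies pointwise in time, which is exactly what Lemma \ref{lem-dsp02} gives, and (ii) ensuring $\delta>0$, which is built into the definition of $\mc B_\omega^+$ via $\bb S_\omega(\bb U_0)<d_{\eq}(\omega)$.
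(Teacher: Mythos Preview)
Your proof is correct and follows essentially the same route as the paper: invoke Lemma \ref{lem-dsp02} for invariance of $\mc B_\omega^+$, apply Lemma \ref{lem-dsp01} together with conservation of $\bb S_\omega$ to get a uniform negative upper bound on $\bb P(\bb U(t))$, and then use the virial identity \eqref{well_18} and the standard concavity argument to force $T_{H^1}<\infty$. The paper's proof is the same argument written more tersely.
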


\begin{proof}
By Lemma \ref{lem-dsp02} and Proposition \ref{prop-virial}, 
we have 
$\bb U(t)\in \mc B_{\omega}^+ \cap \Sigma(\Gamma)$ for all $t\in [0,T_{H^1})$. 
Moreover, by  virial identity \eqref{well_18}, 
 conservation laws \eqref{conservation-laws}
and Lemma \ref{lem-dsp01}, we have 
\begin{align*}
\frac{1}{16}\frac{d^2}{dt^2} \|x \bb U(t)\|_{2}^2 
=\frac{1}{2} \bb P(\bb U(t)) 
\le \bb S_{\omega} (\bb U(t))-d_{\eq} (\omega) 
=\bb S_{\omega}(\bb U_0)-d_{\eq}(\omega)<0
\end{align*}
for all $t\in [0,T_{H^1})$.
Denoting $-b:= \bb S_{\omega}(\bb U_0)-d_{\eq}(\omega)<0$ we get 
\[\|x \bb U(t)\|^2_2\leq -16bt^2+Ct+\|x\bb U_0\|_2^2,\]
from which we conclude $T_{H^1}<\infty$. 
\end{proof}

We are now in a position to give the proof of Theorem \ref{alpha>0}. 

\begin{proof}[Proof of Theorem \ref{alpha>0}]
First, we note that $\bb \Phi
=\bb\Phi_0^\alpha(x)
\in \EE_{\eq}(\Gamma)\cap \Sigma(\Gamma)$. 

Since 
$\bb S_{\omega}'(\bb \Phi)=0$ and $\beta=\frac{p-1}{2}\ge 2$, 
the function 
\begin{align*}
(0,\infty)\ni \lambda\mapsto 
\bb S_{\omega}(\bb \Phi^{\lambda})
=\frac{\lambda^2}{2} \|\bb \Phi'\|_{2}^2
+\frac{\alpha}{2} \lambda |\varphi(0)|^2
+\frac{\omega}{2} \|\bb \Phi\|_{2}^2 
-\frac{\lambda^{\beta}}{p+1} \|\bb \Phi\|_{p+1}^{p+1}
\end{align*}
attains its maximum at $\lambda=1$, 
and we see that 
\begin{align*}
\bb S_{\omega}(\bb \Phi^{\lambda})
<\bb S_{\omega}(\bb \Phi)=d_{\eq}(\omega), \quad 
\bb P (\bb \Phi^{\lambda})
=\lambda \partial_{\lambda} \bb S_{\omega} (\bb \Phi^{\lambda})
<0
\end{align*}
for all $\lambda>1$. 
Thus, for $\lambda>1$, 
$\bb \Phi^{\lambda} \in \mc B_{\omega}^+\cap \Sigma(\Gamma)$, 
and it follows from Lemma \ref{lem-dsp03} that 
the solution $\bb U(t)$ of \eqref{NLS_graph_ger} 
with $\bb U(0)=\bb \Phi^{\lambda}$ 
blows up in finite time. 
Finally, since
$\displaystyle{
\lim_{\lambda\to 1} \|\bb \Phi^{\lambda}
-\bb \Phi\|_{H^1}=0}$, 
the proof is completed. 
\end{proof}

\begin{remark} 
Observe that for $\alpha=0$ one can prove analogously the result:
\textit{ Let  $\alpha=0$,\, $\omega>0$, and $p\geq 5$, 
then  the standing wave $e^{i\omega t}\bb \Phi_0^0(x)$ is  strongly unstable.  
 }
\end{remark}
       
\begin{remark}
\begin{itemize}
\item[$(i)$]\, In \cite{BerCaz81} the authors studied the strong instability of the  standing wave solution (ground state) to the NLS equation 
$$i\partial_tu=-\Delta u-|u|^{p-1}u,\quad (t,x)\in\mathbb{R}\times \mathbb{R}^n.$$
They have used the fact that the ground state  is the minimizer of the problem
$$d(\omega)=\inf\{S_\omega(v):\, v\in H^1(\mathbb{R}^n)\setminus\{0\},\, \,P(v)=0\},$$
where $S_\omega$ is the corresponding action functional, and $P$ is from the virial identity.
Similarly to the proof of Theorem \ref{alpha<0}, the authors use invariance of the set
$$\mc B_\omega=\{v\in H^1(\mathbb{R}^n):\, S_\omega(v)<d(\omega),\,\,\, P(v)<0 \}$$  under the flow of the NLS equation.
\item[$(ii)$]\, In \cite{CozFuk08} the authors considered the particular case $n=2$, i.e.  the  NLS-$\delta$  equation on the line. Namely, the strong instability of the standing wave $\varphi_{\omega,\gamma}$ was proved for $\gamma<0$ and $p\geq 5$. The authors used   the  fact that  $\varphi_{\omega,\gamma}$ is the minimizer of the problem
$$d_{\mc M}=\inf\{S_{\omega,\gamma}(v):\, H_{\rad}^1(\mathbb{R})\setminus\{0\}, \,\, P_\gamma(v)=0,\, \, I_{\omega,\gamma}(v)\leq 0\}.$$
Moreover, the invariance of the set 
$$\mc B_{\omega,\gamma}=\{v\in H^1_{\rad}(\mathbb{R}):\, S_{\omega,\gamma}(v)<S_{\omega,\gamma}(\varphi_{\omega,\gamma}),\,\, P_\gamma(v)<0,\,\, I_{\omega,\gamma}(v)< 0\}$$ under the flow of the NLS-$\delta$ equation was used. 
\item[$(iii)$]\, The proof by \cite{CozFuk08} mentioned above can be generalized to the case of $\Gamma$ and $\alpha>0$. Namely, one needs to prove that $\bb \Phi_0^\alpha$ is the minimizer of  
$$d_{\mc M}(\omega)=\inf\{\bb S_\omega(\bb V): \bb V\in \mc E_{\eq}(\Gamma)\setminus\{0\},\quad \bb P(\bb V)=0,\quad \bb I_\omega(\bb V)\leq 0\},$$
and to substitute  $\mc B_{\omega,\gamma}$ by 
$$\mc B_{\omega,\alpha}=\{\bb V\in \mc E_{\eq}(\Gamma):
\, \bb S_\omega(\bb V)<\bb S_\omega(\bb\Phi_0^\alpha),\quad  \bb I_\omega(\bb V)<0,\quad \bb P(\bb V)<0\}.$$  
\end{itemize}
\end{remark} 
 
\subsection{Proof of Theorem \ref{alpha<0}} 
As in the previous case the proof can be divided into series of lemmas. 
\begin{lemma}\label{second_der}
Let $\alpha<0, p>5$ and $\omega>\frac{\alpha^2}{N^2}.$ Let $\omega_1$ be the number defined in Theorem \ref{alpha<0}. Then $\partial_\lambda^2\bb E(\bb \Phi^\lambda)|_{\lambda=1}\leq 0$ if and only if $\omega\geq \omega_1$. 
\end{lemma}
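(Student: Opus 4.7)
The plan is to exploit the scaling $\bb\Phi^\lambda(x)=\lambda^{1/2}\bb\Phi(\lambda x)$ together with the Pohozaev identity $\bb P(\bb\Phi)=0$ in order to reduce the sign of $\partial_\lambda^2\bb E(\bb\Phi^\lambda)|_{\lambda=1}$ to a single scalar inequality in the natural dimensionless parameter $\xi_0:=-\alpha/(N\sqrt{\omega})\in(0,1)$ which, after the dust settles, matches the defining equation of $\xi_1$ on the nose. I first compute, with $\beta=(p-1)/2$,
\[
\bb E(\bb\Phi^\lambda)=\tfrac{\lambda^2}{2}\|\bb\Phi'\|_2^2+\tfrac{\alpha\lambda}{2}|\varphi(0)|^2-\tfrac{\lambda^{\beta}}{p+1}\|\bb\Phi\|_{p+1}^{p+1},
\]
so that $\partial_\lambda^2\bb E(\bb\Phi^\lambda)|_{\lambda=1}=\|\bb\Phi'\|_2^2-\tfrac{\beta(\beta-1)}{p+1}\|\bb\Phi\|_{p+1}^{p+1}$. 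Since $\|\bb\Phi^\lambda\|_2$ is independent of $\lambda$, criticality of $\bb\Phi$ for $\bb S_\omega$ yields $\partial_\lambda\bb E(\bb\Phi^\lambda)|_{\lambda=1}=\bb P(\bb\Phi)=0$, giving $\|\bb\Phi'\|_2^2=-\tfrac{\alpha}{2}|\varphi(0)|^2+\tfrac{p-1}{2(p+1)}\|\bb\Phi\|_{p+1}^{p+1}$. Substituting this back and collecting the $\|\bb\Phi\|_{p+1}^{p+1}$ coefficient via $\tfrac{p-1}{2}-\beta(\beta-1)=\tfrac{(p-1)(5-p)}{4}$ leads to
\[
\partial_\lambda^2\bb E(\bb\Phi^\lambda)|_{\lambda=1}=-\tfrac{\alpha}{2}|\varphi(0)|^2-\tfrac{(p-1)(p-5)}{4(p+1)}\|\bb\Phi\|_{p+1}^{p+1},
\]
which is a difference of two positive quantities when $\alpha<0$ and $p>5$.

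Next I evaluate each quantity in closed form using the explicit formula for $\varphi=\varphi_{0,1}^\alpha$. Since $\tanh(a_0)=\xi_0$, one has $\sech^2(a_0)=1-\xi_0^2$, whence $|\varphi(0)|^2=\bigl(\tfrac{(p+1)\omega}{2}\bigr)^{2/(p-1)}(1-\xi_0^2)^{2/(p-1)}$. For the $L^{p+1}$-norm the substitution $y=\tfrac{(p-1)\sqrt{\omega}}{2}x+a_0$ followed by $s=\tanh y$ (using the identity $\sech^{2(p+1)/(p-1)}(y)\,dy=(1-s^2)^{2/(p-1)}\,ds$) gives
\[
\|\bb\Phi\|_{p+1}^{p+1}=\frac{2N}{(p-1)\sqrt{\omega}}\Bigl(\tfrac{(p+1)\omega}{2}\Bigr)^{(p+1)/(p-1)}\int_{\xi_0}^{1}(1-s^2)^{2/(p-1)}\,ds.
\]
Using $-\alpha=N\sqrt{\omega}\,\xi_0$ and dividing out the common positive factor $N\sqrt{\omega}\bigl(\tfrac{(p+1)\omega}{2}\bigr)^{2/(p-1)}$, the inequality $\partial_\lambda^2\bb E(\bb\Phi^\lambda)|_{\lambda=1}\le 0$ collapses exactly to
\[
\xi_0(1-\xi_0^2)^{2/(p-1)}\le \tfrac{p-5}{2}\int_{\xi_0}^{1}(1-s^2)^{2/(p-1)}\,ds,
\]
that is, to $f(\xi_0)\ge 0$ where $f$ is the function defining $\xi_1$.

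To conclude, observe $f(0)=\tfrac{p-5}{2}\int_0^1(1-s^2)^{2/(p-1)}ds>0$ and $f(1)=0$, while the local expansions $\int_\xi^1(1-s^2)^{2/(p-1)}ds=O((1-\xi)^{1+2/(p-1)})$ and $\xi(1-\xi^2)^{2/(p-1)}\sim 2^{2/(p-1)}(1-\xi)^{2/(p-1)}$ show $f(1-\varepsilon)<0$ for small $\varepsilon>0$. Together with the hypothesis that $\xi_1$ is the \emph{unique} zero of $f$ in $(0,1)$, continuity forces $f\ge 0$ on $[0,\xi_1]$ and $f<0$ on $(\xi_1,1)$; thus $f(\xi_0)\ge 0\iff \xi_0\le\xi_1$, which, via $\xi_0^2=\alpha^2/(N^2\omega)$ and $\omega_1=\alpha^2/(N^2\xi_1^2)$, is equivalent to $\omega\ge\omega_1$. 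I expect the main obstacle to be the explicit quadrature of $\|\bb\Phi\|_{p+1}^{p+1}$ together with the recognition that after cancellation the two a~priori unrelated quantities $|\varphi(0)|^2$ and $\|\bb\Phi\|_{p+1}^{p+1}$ assemble into precisely the ratio appearing in the defining equation of $\xi_1$; once $\xi_0$ is chosen as the dimensionless variable, the rest is algebraic.
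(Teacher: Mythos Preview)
Your proof is correct and follows essentially the same route as the paper: use $\bb P(\bb\Phi)=0$ to reduce $\partial_\lambda^2\bb E(\bb\Phi^\lambda)|_{\lambda=1}\le 0$ to the inequality $-\alpha|\varphi(0)|^2\le\frac{(p-1)(p-5)}{2(p+1)}\|\bb\Phi\|_{p+1}^{p+1}$, compute both sides explicitly in terms of $\xi_0=-\alpha/(N\sqrt{\omega})$, and recognise the defining relation for $\xi_1$. The only minor difference is in the final sign analysis of $f$: the paper studies $f'$ directly (showing $f'$ has a unique zero and $f'(\xi)<0$ for small $\xi$, hence $f$ has a unique zero in $(0,1)$), whereas you use the local expansion near $\xi=1$ to get $f(1-\varepsilon)<0$ and then invoke the uniqueness of $\xi_1$ already asserted in the statement of Theorem~\ref{alpha<0}; both arguments reach the same conclusion.
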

\begin{proof}
Since $\bb P(\bb \Phi)=\|\bb \Phi'\|_2^2+\frac{\alpha}{2}|\varphi(0)|^2-\frac{p-1}{2(p+1)}\|\bb\Phi\|_{p+1}^{p+1}=0,$ the condition $\partial_\lambda^2\bb E(\bb \Phi^\lambda)|_{\lambda=1}= \|\bb \Phi'\|_2^2-\frac{(p-1)(p-3)}{4(p+1)}\|\bb\Phi\|_{p+1}^{p+1}\leq 0$ is equivalent to 
\begin{equation}\label{sec_1}
-\alpha|\varphi(0)|^2\leq \frac{(p-1)(p-5)}{2(p+1)}\|\bb\Phi\|_{p+1}^{p+1}.
\end{equation}
Denoting $\xi=\frac{-\alpha}{N\sqrt{\omega}},$ we obtain
\begin{equation}\label{sec_2}
|\varphi(0)|^2=\left[\frac{(p+1)\omega}{2}\sech^2(\tanh^{-1}\xi)\right]^{\frac{2}{p-1}}=\left[\frac{(p+1)\omega}{2}(1-\xi^2)\right]^{\frac{2}{p-1}},
\end{equation}
and
\begin{equation}\label{sec_3}
\begin{split}
&\|\bb\Phi\|_{p+1}^{p+1}=N\int\limits_{\mathbb{R}_+}\left[\frac{(p+1)\omega}{2}\sech^2\left(\frac{(p-1)\sqrt{\omega}}{2}x+\tanh^{-1}\xi\right)\right]^{\frac{p+1}{p-1}}dx\\&=\frac{2N}{(p-1)\sqrt{\omega}}\left(\frac{(p+1)\omega}{2}\right)^{\frac{p+1}{p-1}}\int\limits_{\tanh^{-1}\xi}^\infty(\sech^2y)^{\frac{p+1}{p-1}}dy\\&=\frac{2N}{(p-1)\sqrt{\omega}}\left(\frac{(p+1)\omega}{2}\right)^{\frac{p+1}{p-1}}\int\limits_{\xi}^1(1-s^2)^{\frac{2}{p-1}}ds.
\end{split}
\end{equation}
Using \eqref{sec_2} and \eqref{sec_3}, we see that \eqref{sec_1} is equivalent to 
\begin{equation}\label{sec_4}
\frac{p-5}{2}\int\limits_\xi^1(1-s^2)^{\frac{2}{p-1}}ds\geq\xi(1-\xi^2)^{\frac{2}{p-1}}.
\end{equation}
Consider the function $f(\xi)=\frac{p-5}{2}\int\limits_\xi^1(1-s^2)^{\frac{2}{p-1}}ds-\xi(1-\xi^2)^{\frac{2}{p-1}}, \,\, \xi\in[0,1].$
Observing that $f(0)>1, f(1)=0$, the derivative $f'(\xi)$ has a unique zero in $(0,1)$, and $f'(\xi)<0$ for small positive $\xi$, the function $f$ has a unique zero $\xi_1$ in $(0,1)$. Hence $f(\xi)\geq 0$ for $\xi\in[0, \xi_1]$, and  therefore,  recalling that $\xi=\frac{-\alpha}{N\sqrt{\omega}},$ inequality \eqref{sec_4} holds for  $\omega\geq\omega_1=\frac{\alpha^2}{N^2\xi^2_1}.$ 
\end{proof} 
Throughout this Section we impose the assumption $\omega\geq \omega_1$ or equivalently, by the above Lemma, we assume that $\partial_\lambda^2\bb E(\bb \Phi^\lambda)|_{\lambda=1}\leq 0$.
\begin{lemma} \label{lem-vp00}
If $\bb V\in \EE_{\eq}(\Gamma)$ and $\| \bb V\|_{p+1}=\|\bb\Phi\|_{p+1}$, 
then $\bb S_{\omega}(\bb V)\ge d_{\eq}(\omega)$.
\end{lemma}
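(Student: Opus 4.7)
The plan is to reduce the inequality to a scaling argument on the Nehari manifold, exploiting the identity \eqref{well_29} which says that $d_{\eq}(\omega)$ coincides with the infimum of $\tfrac{p-1}{2(p+1)} \|\bb V\|_{p+1}^{p+1}$ over nonzero $\bb V\in\EE_{\eq}(\Gamma)$ satisfying $\bb I_\omega(\bb V)=0$. Since $\bb\Phi$ is a critical point of $\bb S_\omega$, one has $\bb I_\omega(\bb\Phi)=0$, so by \eqref{var_1} applied to $\bb\Phi$,
\[
d_{\eq}(\omega)=\bb S_\omega(\bb\Phi)=\frac{p-1}{2(p+1)}\|\bb\Phi\|_{p+1}^{p+1}=\frac{p-1}{2(p+1)}\|\bb V\|_{p+1}^{p+1},
\]
using the equimass hypothesis. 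Invoking \eqref{var_1} once more for $\bb V$, the desired inequality $\bb S_\omega(\bb V)\ge d_{\eq}(\omega)$ is equivalent to $\bb I_\omega(\bb V)\ge 0$, and this is what I will establish.

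To prove $\bb I_\omega(\bb V)\ge 0$, I consider the scaling $\lambda\mapsto\lambda \bb V$ and study
\[
g(\lambda):=\bb I_\omega(\lambda\bb V)=\lambda^2\bigl(\|\bb V'\|_2^2+\omega\|\bb V\|_2^2+\alpha|v_1(0)|^2\bigr)-\lambda^{p+1}\|\bb V\|_{p+1}^{p+1},\quad\lambda>0.
\]
The coefficient $a:=\|\bb V'\|_2^2+\omega\|\bb V\|_2^2+\alpha|v_1(0)|^2$ equals $\langle (H+\omega)\bb V,\bb V\rangle$; since $\inf\sigma(H)=-\alpha^2/N^2$ when $\alpha<0$ (and $=0$ otherwise) and $\omega>\alpha^2/N^2$, we have $a\ge (\omega-\alpha^2/N^2)\|\bb V\|_2^2>0$ for the nonzero $\bb V$ in question. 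Thus $g$ starts positive at $\lambda=0^+$, has a unique interior maximum, and tends to $-\infty$ as $\lambda\to\infty$, so there is a unique $\lambda_0>0$ with $g(\lambda_0)=0$, and $g(\lambda)\ge 0$ precisely on $(0,\lambda_0]$.

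To conclude, I claim $\lambda_0\ge 1$. Since $\lambda_0\bb V\in\EE_{\eq}(\Gamma)\setminus\{0\}$ lies on the Nehari manifold, \eqref{well_29} yields
\[
d_{\eq}(\omega)\le \frac{p-1}{2(p+1)}\|\lambda_0\bb V\|_{p+1}^{p+1}=\lambda_0^{p+1}\cdot\frac{p-1}{2(p+1)}\|\bb\Phi\|_{p+1}^{p+1}=\lambda_0^{p+1}\,d_{\eq}(\omega).
\]
Since $d_{\eq}(\omega)>0$, this forces $\lambda_0\ge 1$. Consequently $\bb I_\omega(\bb V)=g(1)\ge 0$, and by the reduction above, $\bb S_\omega(\bb V)\ge d_{\eq}(\omega)$, completing the proof.

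The only genuine obstacle is the positivity $a>0$ of the quadratic form $\langle (H+\omega)\cdot,\cdot\rangle$ on $\mc E_{\eq}(\Gamma)\setminus\{0\}$, which could fail in the repulsive regime were it not for the standing assumption $\omega>\alpha^2/N^2$; everything else is pure scaling combined with the variational characterization \eqref{well_29}.
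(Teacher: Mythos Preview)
Your proof is correct and follows essentially the same approach as the paper: both scale $\bb V$ to the Nehari manifold via $\lambda\mapsto\lambda\bb V$, invoke the variational characterization \eqref{well_29} to compare $L^{p+1}$ norms, and deduce $\bb I_\omega(\bb V)\ge 0$, from which the conclusion follows by \eqref{var_1}. The only cosmetic difference is that the paper argues by contradiction (assuming $\bb I_\omega(\bb V)<0$ forces the scaling parameter strictly below $1$, contradicting $\|\bb V\|_{p+1}=\|\bb\Phi\|_{p+1}$), whereas you argue directly that $\lambda_0\ge 1$; you also make explicit the positivity $a>0$ via the spectral bound $\omega>\alpha^2/N^2$, which the paper uses implicitly.
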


\begin{proof}
First, we prove $\bb I_{\omega}(\bb V)\ge 0$ by contradiction. 
Suppose that $\bb I_{\omega}(\bb V)<0$. 
Let
\[
\lambda_1=\left(
\frac{\| \bb V' \|_2^2+\omega \|\bb V \|_2^2+\alpha |v_1(0)|^2}
{\| \bb V \|_{p+1}^{p+1}}\right)^{1/(p-1)}. 
\]
Then, $0<\lambda_1<1$ and $\bb I_{\omega}(\lambda_1 \bb V)=0$. 
Moreover, since $\lambda_1 \bb V\in \EE_{\eq}(\Gamma)\setminus\{0\}$, 
it follows from \eqref{well_29} and \eqref{var_1} that 
\begin{align*}
\frac{p-1}{2(p+1)} \| \bb \Phi\|_{p+1}^{p+1}
&=d_{\eq}(\omega)
\le \bb S_{\omega}(\lambda_1 \bb V)
=\bb S_{\omega}(\lambda_1 \bb V)-\frac{1}{2} \bb I_{\omega}(\lambda_1 \bb V) \\
&=\frac{p-1}{2(p+1)} \| \lambda_1 \bb V\|_{p+1}^{p+1}
<\frac{p-1}{2(p+1)} \| \bb V\|_{p+1}^{p+1}.
\end{align*}
This contradicts the assumption $\| \bb V\|_{p+1}=\| \bb \Phi\|_{p+1}$. 
Thus, we have $\bb I_{\omega}(\bb V)\ge 0$. 

Finally, we arrive at
\[
d_{\eq}(\omega)=\frac{p-1}{2(p+1)} \| \bb \Phi\|_{p+1}^{p+1}
\le \frac{p-1}{2(p+1)} \| \bb V\|_{p+1}^{p+1}+\frac{1}{2} \bb I_{\omega}(\bb V)
=\bb S_{\omega}(\bb V). 
\]
This completes the proof. 
\end{proof}

\begin{lemma}\label{lem-dsp21} 
If $\bb V\in \EE_{\eq}(\Gamma)$ satisfies 
\[
\|\bb V\|_2\le \|\bb \Phi\|_2, \quad 
\|\bb V\|_{p+1}>\|\bb \Phi\|_{p+1}, \quad 
\bb P(\bb V)\le 0,
\]
then
\[
d_{\eq}(\omega)\le \bb S_{\omega}(\bb V)-\frac{1}{2}\bb P(\bb V). 
\]
\end{lemma}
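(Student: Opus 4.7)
The overall strategy mirrors Lemma \ref{lem-dsp01}: I rescale $\bb V$ until the rescaled function lands on the Nehari-type manifold used in the variational characterization \eqref{well_29}, and then compare its action with $\bb S_\omega(\bb V)-\tfrac12 \bb P(\bb V)$. The difficulty, compared with the $\alpha>0$ setting of Lemma \ref{lem-dsp01}, is that for $\alpha<0$ the boundary term $\alpha|v_1(0)|^2$ does not combine favorably with the quadratic rescaling of $\|\bb V'\|_2^2$; this is precisely why the extra hypotheses $\|\bb V\|_2\le \|\bb \Phi\|_2$, $\|\bb V\|_{p+1}>\|\bb \Phi\|_{p+1}$, together with the standing assumption $\omega\ge\omega_1$ from Lemma \ref{second_der}, enter the argument.

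The scaling I will use is the $L^2$-preserving one $\bb V^\lambda(x)=\lambda^{1/2}\bb V(\lambda x)$, exactly as in Lemma \ref{lem-dsp01}. A direct computation gives
\[
\bb I_\omega(\bb V^\lambda)=\lambda^2\|\bb V'\|_2^2+\omega\|\bb V\|_2^2+\alpha\lambda|v_1(0)|^2-\lambda^\beta\|\bb V\|_{p+1}^{p+1},\qquad \beta=\tfrac{p-1}{2}>2.
\]
The first step is to check that $\bb I_\omega(\bb V^\lambda)>0$ for $\lambda\to 0^+$ (from the $\omega\|\bb V\|_2^2$ term) and $\bb I_\omega(\bb V^\lambda)\to-\infty$ as $\lambda\to\infty$ (because $\beta>2$), so there exists $\lambda_0>0$ with $\bb I_\omega(\bb V^{\lambda_0})=0$. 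I will then argue that $\lambda_0\le 1$ by using $\bb P(\bb V)\le 0$ to bound $\|\bb V'\|_2^2\le\tfrac{\beta}{p+1}\|\bb V\|_{p+1}^{p+1}-\tfrac{\alpha}{2}|v_1(0)|^2$ and feeding this into $\bb I_\omega(\bb V)$, which combined with $\|\bb V\|_{p+1}>\|\bb \Phi\|_{p+1}$ forces $\bb I_\omega(\bb V)\le 0$ (and hence, by monotonicity of $\bb I_\omega(\bb V^\lambda)$ near $\lambda=1$, $\lambda_0\le 1$). By \eqref{well_29} applied to $\bb V^{\lambda_0}\in\EE_{\eq}(\Gamma)\setminus\{0\}$, one obtains $d_{\eq}(\omega)\le \bb S_\omega(\bb V^{\lambda_0})$.

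The second step is to introduce the auxiliary function $F(\lambda):=\bb S_\omega(\bb V^\lambda)-\tfrac{\lambda^2}{2}\bb P(\bb V)$, which at $\lambda=1$ equals exactly the right-hand side $\bb S_\omega(\bb V)-\tfrac12\bb P(\bb V)$. A direct expansion yields
\[
F(\lambda)=\tfrac{\omega}{2}\|\bb V\|_2^2+\tfrac{\alpha(2\lambda-\lambda^2)}{4}|v_1(0)|^2+\tfrac{\beta\lambda^2-2\lambda^\beta}{2(p+1)}\|\bb V\|_{p+1}^{p+1},
\]
and $F'(1)=0$. Combining $\bb S_\omega(\bb V^{\lambda_0})\ge d_{\eq}(\omega)$ with $\bb P(\bb V)\le 0$ and $\lambda_0\le 1$ one gets
\[
d_{\eq}(\omega)\;\le\;\bb S_\omega(\bb V^{\lambda_0})\;=\;F(\lambda_0)+\tfrac{\lambda_0^2}{2}\bb P(\bb V)\;\le\;F(\lambda_0),
\]
so it suffices to prove the monotonicity bound $F(\lambda_0)\le F(1)$. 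Because $\beta\lambda^2-2\lambda^\beta$ is strictly increasing on $(0,1]$ while $\alpha(2\lambda-\lambda^2)$ is strictly decreasing there (since $\alpha<0$), this reduces to showing that the $\|\bb V\|_{p+1}^{p+1}$ contribution dominates the boundary contribution over the interval $[\lambda_0,1]$.

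\textbf{Main obstacle.} The delicate step is establishing $F(\lambda_0)\le F(1)$. Roughly it requires an estimate of the form $|\alpha||v_1(0)|^2\le c(p)\|\bb V\|_{p+1}^{p+1}$ with $c(p)=\tfrac{(p-1)(p-5)}{2(p+1)}$, which for $\bb V=\bb \Phi$ is precisely the equivalence proved in Lemma \ref{second_der} (and justified by $\omega\ge\omega_1$). Extending this to a general $\bb V$ satisfying the three hypotheses will require exploiting $\|\bb V\|_{p+1}>\|\bb \Phi\|_{p+1}$ to increase the favourable $\|\bb V\|_{p+1}^{p+1}$ side, and $\|\bb V\|_2\le\|\bb \Phi\|_2$ to keep the $\tfrac{\omega}{2}\|\bb V\|_2^2$ contribution under control; this is the computational heart of the proof.
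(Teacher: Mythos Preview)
Your overall framework (the $L^2$-preserving rescaling $\bb V^\lambda$, the auxiliary function $F(\lambda)=\bb S_\omega(\bb V^\lambda)-\tfrac{\lambda^2}{2}\bb P(\bb V)$, and the reduction to $F(\lambda_0)\le F(1)$) matches the paper. The genuine gap is in your choice of $\lambda_0$ and, consequently, in both sub-steps built on it.

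\textbf{The claim $\bb I_\omega(\bb V)\le 0$ is not justified.} Substituting the bound from $\bb P(\bb V)\le 0$ gives only
\[
\bb I_\omega(\bb V)\le -\tfrac{p+3}{2(p+1)}\|\bb V\|_{p+1}^{p+1}+\tfrac{\alpha}{2}|v_1(0)|^2+\omega\|\bb V\|_2^2,
\]
and even after inserting $\|\bb V\|_2\le\|\bb\Phi\|_2$, $\|\bb V\|_{p+1}>\|\bb\Phi\|_{p+1}$, the Pohozaev identity, and the $\omega\ge\omega_1$ bound $\|\bb\Phi'\|_2^2\le\tfrac{\beta(\beta-1)}{p+1}\|\bb\Phi\|_{p+1}^{p+1}$, the resulting coefficient is $\tfrac{(p-1)(p-5)}{4(p+1)}>0$ for $p>5$. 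So the hypotheses do \emph{not} force $\bb I_\omega(\bb V)\le 0$, and hence do not force your Nehari-$\lambda_0$ into $(0,1]$.

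\textbf{The paper chooses $\lambda_0$ differently, and this is the key idea you are missing.} Instead of landing on the Nehari manifold, the paper sets
\[
\lambda_0=\Bigl(\|\bb\Phi\|_{p+1}^{p+1}/\|\bb V\|_{p+1}^{p+1}\Bigr)^{2/(p-1)}\in(0,1),
\]
so that $\|\bb V^{\lambda_0}\|_{p+1}=\|\bb\Phi\|_{p+1}$. Lemma~\ref{lem-vp00} then gives \emph{both} $d_{\eq}(\omega)\le\bb S_\omega(\bb V^{\lambda_0})$ and $\bb I_\omega(\bb V^{\lambda_0})\ge 0$. The latter inequality, together with $\bb P(\bb V)\le 0$, yields a quantitative upper bound on $-\alpha|v_1(0)|^2$ in terms of $\|\bb V\|_{p+1}^{p+1}$ and $\lambda_0$; the explicit relation $\lambda_0^\beta\|\bb V\|_{p+1}^{p+1}=\|\bb\Phi\|_{p+1}^{p+1}$ lets one import the Pohozaev identity for $\bb\Phi$ and the hypothesis $\omega\ge\omega_1$ (via Lemma~\ref{second_der}) into this bound. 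After these substitutions, $F(\lambda_0)\le F(1)$ reduces to a one-variable rational inequality in $\lambda_0\in(0,1)$ that is verified by calculus. With your implicit Nehari-$\lambda_0$ there is no such link to $\|\bb\Phi\|_{p+1}$, so the ``extending the estimate from $\bb\Phi$ to general $\bb V$'' step in your plan has no mechanism to succeed.
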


\begin{proof}

Define \[\lambda_0=\left(\frac{\|\bb \Phi\|_{p+1}^{p+1}}{\|\bb V\|_{p+1}^{p+1}}\right)^\frac{2}{p-1},\]
then $0<\lambda_0<1$, moreover, $\|\bb V^{\lambda_0}\|_{p+1}^{p+1}=\lambda_0^{\frac{p-1}{2}}\|\bb V\|_{p+1}^{p+1}=\|\bb \Phi\|_{p+1}^{p+1}$. 

 The key ingredient  of the proof is the inequality $\bb S_\omega(\bb \Phi)\leq \bb S_\omega(\bb V^{\lambda_0})$. It follows by Lemma \ref{lem-vp00} since $d_{\eq}(\omega)=\bb S_\omega(\bb \Phi)$ and $\|\bb \Phi\|_{p+1}=\|\bb V^{\lambda_0}\|_{p+1}$. 

Define $f(\lambda)=\bb S_\omega(\bb V^\lambda)-\frac{\lambda^2}{2}\bb P(\bb V),\,\,\lambda\in(0,1].$ Suppose that $f(\lambda_0)\leq f(1)$. Using  $P(\bb V)\leq 0$, one gets
\[\bb S_\omega(\bb \Phi)\leq \bb S_\omega(\bb V^{\lambda_0})\leq \bb S_\omega(\bb V^{\lambda_0})-\frac{\lambda_0^2}{2}\bb P(\bb V)\leq \bb S_\omega(\bb V)-\frac{1}{2}\bb P(\bb V), \] and we are done. Thus, it is sufficient to prove $f(\lambda_0)\leq f(1).$ The proof is analogous to the proofs of \cite[Lemma 3.2]{FukayaO} and  \cite[Lemma 3.1]{Oht18}.
Denote $\beta=\frac{p-1}{2}.$ Observe that 
\begin{equation}\label{lem-dsp22}
f(\lambda_0)\leq f(1)\,\,\Longleftrightarrow\,\, -\alpha|v_1(0)|^2\leq \frac{2}{p+1}\frac{2\lambda_0^\beta-\beta\lambda_0^2-2+\beta}{(\lambda_0-1)^2}\|\bb V\|_{p+1}^{p+1}.
\end{equation}
Thus, one should be aimed to prove the second inequality in  \eqref{lem-dsp22}.
Note that  the condition $\partial^2_\lambda\bb E(\bb \Phi^\lambda)|_{\lambda=1}=\|\bb \Phi'\|_2^2-\frac{(p-1)(p-3)}{4(p+1)}\|\bb \Phi\|_{p+1}^{p+1}\leq 0$ is equivalent to 
\begin{equation}\label{lem-dsp23}
\|\bb \Phi'\|_2^2\leq \frac{\beta (\beta-1)}{p+1}\|\bb \Phi\|_{p+1}^{p+1}.
\end{equation}
Using Pohozaev-type equality 
\[\|\bb \Phi'\|_2^2-\omega\|\bb \Phi\|_2^2+\frac{2}{p+1}\|\bb \Phi\|_{p+1}^{p+1}=0\] and estimate \eqref{lem-dsp23}, we deduce
\begin{equation}\label{lem-dsp24}
\omega\|\bb \Phi\|_2^2=\|\bb \Phi'\|_2^2+\frac{2}{p+1}\|\bb \Phi\|_{p+1}^{p+1}\leq \frac{\beta^2-\beta+2}{p+1}\|\bb \Phi\|_{p+1}^{p+1}.
\end{equation}  
Combining $\|\bb V\|_2^2\leq \|\bb \Phi\|_2^2$ and $\|\bb \Phi\|_{p+1}^{p+1}=\lambda_0^\beta\|\bb V\|_{p+1}^{p+1}$, we obtain from \eqref{lem-dsp24}
\begin{equation}\label{lem-dsp24a}
\omega\|\bb \Phi\|_2^2\leq \frac{\beta^2-\beta+2}{p+1}\lambda^\beta_0\|\bb V\|_{p+1}^{p+1}.
\end{equation}
By the proof of Lemma \ref{lem-vp00}, we have
\[\bb I_\omega(\bb V^{\lambda_0})=\lambda_0^2\|\bb V'\|_2^2+\omega\|\bb V\|_2^2+\lambda_0\alpha|v_1(0)|^2-\lambda^\beta_0\|\bb V\|_{p+1}^{p+1}\geq 0, \] and therefore
\begin{equation}\label{lem-dsp25}
-\lambda_0\alpha|v_1(0)|^2\leq \lambda_0^2\|\bb V'\|_2^2+\omega\|\bb V\|_2^2-\lambda^\beta_0\|\bb V\|_{p+1}^{p+1}.
\end{equation}
The condition $\bb P(\bb V)=\|\bb V'\|_2^2+\frac{\alpha}{2}|v_1(0)|^2-\frac{\beta}{p+1}\|\bb V\|_{p+1}^{p+1}\leq 0$  implies
\begin{equation}\label{lem-dsp26}
\|\bb V'\|_2^2\leq -\frac{\alpha}{2}|v_1(0)|^2+\frac{\beta}{p+1}\|\bb V\|_{p+1}^{p+1}.
\end{equation}
Combining \eqref{lem-dsp24a}-\eqref{lem-dsp26} we get
\begin{equation}\label{lem-dsp27}
-\alpha|v_1(0)|^2\leq \frac{2}{p+1}\frac{\beta(\lambda^2_0+(\beta-3)\lambda^\beta_0)}{\lambda_0(2-\lambda_0)}\|\bb V\|_{p+1}^{p+1}.
\end{equation}
By \eqref{lem-dsp22} and \eqref{lem-dsp27}, we conclude that  $f(\lambda_0)\leq f(1)$ holds if 
\begin{equation}\label{lem-dsp28}
\frac{\beta(\lambda^2+(\beta-3)\lambda^\beta)}{\lambda(2-\lambda)}\leq \frac{2\lambda^\beta-\beta\lambda^2-2+\beta}{(\lambda-1)^2}\,\,\text{for}\,\,\lambda\in(0,1).
\end{equation}
Inequality \eqref{lem-dsp28} can be verified  by proving that the derivative of the function 
$$g(\lambda)=\frac{\beta(\lambda^2+(\beta-3)\lambda^\beta)}{\lambda(2-\lambda)}-\frac{2\lambda^\beta-\beta\lambda^2-2+\beta}{(\lambda-1)^2}$$ is nonpositive for $\lambda\in(0,1)$. This can be done similarly to the second part of the proof of \cite[Lemma 3.2]{FukayaO}.
 \end{proof}
 \begin{remark}
 Observe that the condition $\partial^2_\lambda \bb E(\bb \Phi^\lambda)|_{\lambda=1}\leq 0$ is crucial for the proof of the key inequality  $d_{\eq}(\omega)\leq \bb S_\omega(\bb V)-\frac{1}{2}\bb P(\bb V).$
 \end{remark}
We introduce
\begin{align*}
\mc B_{\omega}^-
:=\left\{\begin{array}{c}
 \bb V\in \mc E_{\eq}(\Gamma):\,\bb S_\omega(\bb V)< d_{\eq}(\omega), \, 
\bb P(\bb V)<0, \\
\| \bb V\|_2\le \| \bb \Phi \|_2,\, 
\| \bb V\|_{p+1}> \| \bb \Phi \|_{p+1}\end{array}\right\}.
\end{align*}

\begin{lemma}\label{lem-inv}
The set $\mc B_{\omega}^-$ is invariant under the flow of \eqref{NLS_graph_ger}. 
That is,  if  $\bb U_0\in \mc B_{\omega}^-$, 
then the solution  $\bb U(t)$ to \eqref{NLS_graph_ger} with $\bb U(0)=\bb U_0$ 
belongs to $\mc B_{\omega}^-$  for all $t\in [0, T_{H^1})$. 
\end{lemma}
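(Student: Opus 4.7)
The plan is to mimic the proof of Lemma \ref{lem-dsp02} for the positive $\alpha$ case, but now $\mc B_\omega^-$ has four defining conditions rather than two, so I need to propagate each of them along the flow. Two of them are automatic from the conservation laws: since $\bb U(t)\in \mc E_{\eq}(\Gamma)$ for all $t\in[0,T_{H^1})$ by \cite[Theorem 3.4]{AngGol18}, and since \eqref{conservation-laws} gives $\bb S_\omega(\bb U(t))=\bb E(\bb U(t))+\tfrac{\omega}{2}\|\bb U(t)\|_2^2=\bb S_\omega(\bb U_0)<d_{\eq}(\omega)$ and $\|\bb U(t)\|_2=\|\bb U_0\|_2\le \|\bb\Phi\|_2$, the first, third and (partially) fourth conditions in the definition of $\mc B_\omega^-$ are preserved throughout the maximal interval of existence.

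It remains to show that the strict inequalities $\bb P(\bb U(t))<0$ and $\|\bb U(t)\|_{p+1}>\|\bb \Phi\|_{p+1}$ persist. I argue by continuity and contradiction: if either fails, by continuity of $t\mapsto \bb P(\bb U(t))$ and $t\mapsto \|\bb U(t)\|_{p+1}$ (consequences of $\bb U\in C([0,T_{H^1}),\mc E(\Gamma))$) there is a smallest $t_0\in(0,T_{H^1})$ at which one of them turns into an equality, while the remaining conditions still hold as non-strict inequalities. There are then two cases.

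In the first case, $\|\bb U(t_0)\|_{p+1}=\|\bb\Phi\|_{p+1}$. Since $\bb U(t_0)\in \mc E_{\eq}(\Gamma)$, Lemma \ref{lem-vp00} yields $\bb S_\omega(\bb U(t_0))\ge d_{\eq}(\omega)$, which contradicts $\bb S_\omega(\bb U(t_0))=\bb S_\omega(\bb U_0)<d_{\eq}(\omega)$. In the second case, $\bb P(\bb U(t_0))=0$ and (to avoid being in the first case) $\|\bb U(t_0)\|_{p+1}>\|\bb \Phi\|_{p+1}$, while $\|\bb U(t_0)\|_2\le \|\bb\Phi\|_2$ is automatic. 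Thus $\bb U(t_0)$ meets the hypotheses of Lemma \ref{lem-dsp21}, so
\[
d_{\eq}(\omega)\le \bb S_\omega(\bb U(t_0))-\tfrac{1}{2}\bb P(\bb U(t_0))=\bb S_\omega(\bb U_0)<d_{\eq}(\omega),
\]
again a contradiction. Hence both strict conditions are preserved, and $\bb U(t)\in \mc B_\omega^-$ for every $t\in[0,T_{H^1})$.

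The only subtle point is the bookkeeping at the first exit time $t_0$: one must argue that at least one of $\bb P(\bb U(t_0))=0$ or $\|\bb U(t_0)\|_{p+1}=\|\bb \Phi\|_{p+1}$ holds, and that in either case the remaining inequalities have the form required by Lemma \ref{lem-vp00} or Lemma \ref{lem-dsp21}. Aside from this continuity argument, the proof is purely a combination of the two variational estimates already established and the conservation laws from Theorem \ref{well_H1}; no new analytic input is needed.
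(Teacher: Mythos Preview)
Your argument is correct and follows essentially the same route as the paper's proof: conservation laws handle $\bb S_\omega$ and $\|\cdot\|_2$, while Lemma~\ref{lem-vp00} and Lemma~\ref{lem-dsp21} give contradictions if either $\|\bb U(t)\|_{p+1}>\|\bb\Phi\|_{p+1}$ or $\bb P(\bb U(t))<0$ fails. Your use of a single first-exit time with a case split is in fact slightly cleaner than the paper's sequential treatment, since applying Lemma~\ref{lem-dsp21} at the first zero of $\bb P$ tacitly requires the $L^{p+1}$ inequality to still hold there, a point you make explicit.
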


\begin{proof}
First, by \cite[Theorem 3.4]{AngGol18}, 
we have $\bb U(t)\in \mc E_{\eq}(\Gamma)$ for all  $t\in [0,T_{H^1})$. 
Further, 
by  conservation laws \eqref{conservation-laws}, 
for all $t\in [0,T_{H^1})$, we have 
\[
\bb S_{\omega}(\bb U(t))
=\bb E(\bb U(t))+\frac{\omega}{2} \|\bb U(t)\|_{2}^2
=\bb S_{\omega}(\bb U_0) <d_{\eq}(\omega), \quad 
\| \bb U(t) \|_2=\| \bb U_0 \|_2\ge \| \bb \Phi\|_2. 
\]

Next, we prove that $\bb P(\bb U(t))<0$ for all $t\in [0,T_{H^1})$. 
Suppose that this were not true. 
Then, there exists $t_0\in (0,T_{H^1})$ such that $\bb P(\bb U(t_0))=0$. 
Moreover, since $\bb U(t_0)\ne 0$, it follows from Lemma \ref{lem-dsp21} that 
\[
d_{\eq}(\omega)\le \bb S_{\omega}(\bb U(t_0))-\frac{1}{2} \bb P(\bb U(t_0))
=\bb S_{\omega}(\bb U(t_0)).
\]
This contradicts the fact that 
$\bb S_{\omega}(\bb U(t))<d_{\eq}(\omega)$ for all $t\in [0,T_{H^1})$. 
Thus, we have $\bb P(\bb U(t))<0$ for all $t\in [0,T_{H^1})$. 

Finally, we prove that 
$\| \bb U(t)\|_{p+1}> \| \bb \Phi \|_{p+1}$ for all $t\in [0,T_{H^1})$. 
Again suppose that this were not true. 
Then, there exists $t_1\in (0,T_{H^1})$ such that 
$\| \bb U(t_1)\|_{p+1}=\| \bb \Phi\|_{p+1}$. 
By Lemma \ref{lem-vp00}, we have  
$d_{\eq}(\omega) \le \bb S_{\omega}(\bb U(t_1))$. 
This contradicts the fact that 
$\bb S_{\omega}(\bb U(t))<d_{\eq}(\omega)$ for all $t\in [0,T_{H^1})$. 
Hence, we have 
$\| \bb U(t)\|_{p+1}> \| \bb \Phi \|_{p+1}$
for all $t\in [0,T_{H^1})$. 
\end{proof}
\begin{lemma}\label{blow_up_neg}
If $\bb U_0\in \mc B_{\omega}^- \cap \Sigma (\Gamma)$, 
then the solution  $\bb U(t)$ to \eqref{NLS_graph_ger} with $\bb U(0)=\bb U_0$ 
blows up in finite time. 
\end{lemma}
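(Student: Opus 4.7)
The plan is to mirror the argument of Lemma \ref{lem-dsp03} (the $\alpha>0$ case), with Lemma \ref{lem-dsp21} playing the role previously played by Lemma \ref{lem-dsp01}, and Lemma \ref{lem-inv} playing the role of Lemma \ref{lem-dsp02}. The heart of the matter is to show that the second derivative of $\|x\bb U(t)\|_2^2$ is bounded above by a strictly negative constant on the entire interval $[0,T_{H^1})$; once this is in hand, a standard convexity argument forces $T_{H^1}<\infty$.

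Concretely, I will first invoke Lemma \ref{lem-inv} to guarantee that $\bb U(t)\in\mc B_\omega^-$ for all $t\in[0,T_{H^1})$; in particular, $\bb U(t)$ continues to satisfy the three constraints $\|\bb U(t)\|_2\le\|\bb\Phi\|_2$, $\|\bb U(t)\|_{p+1}>\|\bb\Phi\|_{p+1}$, and $\bb P(\bb U(t))<0$. Since $\bb U_0\in\Sigma(\Gamma)$, Proposition \ref{prop-virial} gives $\bb U(t)\in C([0,T_{H^1}),\Sigma(\Gamma))$ together with the virial identity
\begin{equation*}
\frac{d^2}{dt^2}\|x\bb U(t)\|_2^2=8\bb P(\bb U(t)).
\end{equation*}

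Next I apply Lemma \ref{lem-dsp21} to $\bb V=\bb U(t)$: the hypotheses are precisely the three constraints just listed, so
\begin{equation*}
d_{\eq}(\omega)\le \bb S_\omega(\bb U(t))-\tfrac{1}{2}\bb P(\bb U(t)).
\end{equation*}
Using the conservation laws \eqref{conservation-laws}, which yield $\bb S_\omega(\bb U(t))=\bb S_\omega(\bb U_0)$, this rearranges to
\begin{equation*}
\tfrac{1}{2}\bb P(\bb U(t))\le \bb S_\omega(\bb U_0)-d_{\eq}(\omega)=:-b<0,
\end{equation*}
where the strict negativity comes from $\bb U_0\in\mc B_\omega^-$. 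Combining with the virial identity gives $\frac{d^2}{dt^2}\|x\bb U(t)\|_2^2\le -16b$ for all $t\in[0,T_{H^1})$.

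Integrating twice and using \eqref{well_17a} to bound the first derivative at $t=0$ by $4\|x\bb U_0\|_2\,\|\bb U_0'\|_2$, I obtain
\begin{equation*}
0\le \|x\bb U(t)\|_2^2\le -8bt^2+Ct+\|x\bb U_0\|_2^2,
\end{equation*}
with $C=4\,\mathrm{Im}\!\int_\Gamma x\overline{\bb U_0}\,\partial_x\bb U_0\,dx$. The right-hand side becomes negative in finite time, which is incompatible with the nonnegativity of the left-hand side unless $T_{H^1}$ is finite; hence the blow-up alternative of Theorem \ref{well_H1} forces $\|\bb U(t)\|_{H^1(\Gamma)}\to\infty$ as $t\to T_{H^1}<\infty$. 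No step here should be genuinely difficult — the substantive work has already been absorbed into Lemmas \ref{lem-vp00}, \ref{lem-dsp21}, \ref{lem-inv} and into Proposition \ref{prop-virial} — so the only point requiring a moment of care is verifying that the three constraints defining $\mc B_\omega^-$ propagate in time, which is exactly the content of Lemma \ref{lem-inv} and needs only to be cited.
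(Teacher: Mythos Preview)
Your proposal is correct and follows essentially the same argument as the paper's own proof: invoke Lemma \ref{lem-inv} and Proposition \ref{prop-virial} to keep $\bb U(t)\in\mc B_\omega^-\cap\Sigma(\Gamma)$, then combine the virial identity with Lemma \ref{lem-dsp21} and the conservation of $\bb S_\omega$ to obtain a uniform strictly negative upper bound on $\tfrac{d^2}{dt^2}\|x\bb U(t)\|_2^2$, and conclude by the standard convexity argument. Your integration constant $-8bt^2$ is in fact the correct one (the paper's display in the analogous Lemma \ref{lem-dsp03} has a harmless typo with $-16bt^2$).
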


\begin{proof}
By Lemma \ref{lem-inv} and Proposition \ref{prop-virial}, 
we have 
$\bb U(t)\in \mc B_{\omega}^- \cap \Sigma(\Gamma)$ for all $t\in [0,T_{H^1})$. 
Moreover, by  virial identity \eqref{well_18}, 
 conservation laws \eqref{conservation-laws}
and Lemma \ref{lem-dsp21}, we have 
\begin{align*}
\frac{1}{16}\frac{d^2}{dt^2} \|x \bb U(t)\|_{2}^2 
=\frac{1}{2} \bb P(\bb U(t)) 
\le \bb S_{\omega} (\bb U(t))-d_{\eq} (\omega) 
=\bb S_{\omega}(\bb U_0)-d_{\eq}(\omega)<0
\end{align*}
for all $t\in [0,T_{H^1})$, 
from which we conclude $T_{H^1}<\infty$. 
\end{proof}

Finally, we give the proof of Theorem \ref{alpha<0}. 

\begin{proof}[Proof of Theorem \ref{alpha<0}]
First, we note that $\bb \Phi
=\bb\Phi_0^\alpha(x)
\in \EE_{\eq}(\Gamma)\cap \Sigma(\Gamma)$. 
Let $\omega\geq \omega_1,$ then, by Lemma \ref{second_der},   $\partial^2_\lambda\bb E(\bb \Phi^\lambda)|_{\lambda=1}\leq 0$.

Since 
$\bb S_{\omega}'(\bb \Phi)=0$ and $\beta=\frac{p-1}{2}>2$, 
the function 
\begin{align*}
(0,\infty)\ni \lambda\mapsto 
\bb S_{\omega}(\bb \Phi^{\lambda})
=\frac{\lambda^2}{2} \| \bb \Phi'\|_{2}^2
+\frac{\alpha}{2} \lambda |\varphi(0)|^2
+\frac{\omega}{2} \|\bb \Phi\|_{2}^2 
-\frac{\lambda^{\beta}}{p+1} \|\bb \Phi\|_{p+1}^{p+1}
\end{align*}
attains its maximum at $\lambda=1$, 
and we see that 
\begin{align*}
&\bb S_{\omega}(\bb \Phi^{\lambda})
<\bb S_{\omega}(\bb \Phi)=d_{\eq}(\omega), \quad 
\bb P (\bb \Phi^{\lambda})
=\lambda \partial_{\lambda} \bb S_{\omega} (\bb \Phi^{\lambda})<0, \\
&\| \bb \Phi^{\lambda}\|_2=\| \bb \Phi \|_2, \quad 
\| \bb \Phi^{\lambda}\|_{p+1}
=\lambda^{\beta} \| \bb \Phi\|_{p+1} 
>\| \bb \Phi \|_{p+1} 
\end{align*}
for all $\lambda>1$. 
Thus, for $\lambda>1$, 
$\bb \Phi^{\lambda} \in \mc B_{\omega}^- \cap \Sigma(\Gamma)$, 
and it follows from Lemma \ref{blow_up_neg} that 
the solution $\bb U(t)$ of \eqref{NLS_graph_ger} 
with $\bb U(0)=\bb \Phi^{\lambda}$ 
blows up in finite time. 

Finally, since
$\displaystyle{
\lim_{\lambda\to 1} \|\bb \Phi^{\lambda}
-\bb \Phi\|_{H^1}=0}$, 
the proof is completed. 
\end{proof}
\begin{remark}
In \cite{OhtYam16} the authors considered  the strong instability of the standing wave $\varphi_{\omega,\gamma}$ to the NLS-$\delta$ equation on the line for $\gamma>0, p>5$. It particular, it was shown that the condition $E(\varphi_{\omega,\gamma})>0$ guarantees strong instability of $\varphi_{\omega,\gamma}$. Here $E$ is the corresponding   energy functional. The proof by \cite{OhtYam16} can be easily adapted to the case of the NLS-$\delta$ equation on  $\Gamma$, that is, the condition $\bb E(\bb \Phi)>0$ guarantees the strong instability of $\bb \Phi$ for $\alpha<0, p>5$.

In \cite{Oht18} it was noted that the condition  $\bb E(\bb \Phi)>0$ implies $\partial^2_\lambda\bb E(\bb \Phi^\lambda)|_{\lambda=1}\leq 0$, and therefore Theorem \ref{alpha<0} is slightly better than an analogous result with the condition  $\bb E(\bb \Phi)>0$.
\end{remark}

\section{NLS-$\delta'$ equation on the line}\label{sec5}
In this section we consider strong instability of the standing wave solution $u(t,x)=e^{i\omega t}\varphi(x)$ to the NLS-$\delta'$ equation on the line
\begin{equation}\label{NLS_delta'}
i\partial_t u(t,x)-H_\gamma u(t,x) +|u|^{p-1}u=0,
\end{equation}
where $u(t,x): \mathbb{R}\times \mathbb{R}\rightarrow \mathbb{C}$, and $H_\gamma$ is the self-adjoint operator on $L^2(\mathbb{R})$ defined  by
\begin{equation*}
\begin{split}
(H_\gamma v)(x)&=-v''(x),\quad x\neq 0,\\
\dom(H_\gamma)&=\left\{v\in H^2(\mathbb{R}\setminus\{0\}): v'(0-)=v'(0+),\,\, v(0+)-v(0-)=-\gamma v'(0)\right\}.
\end{split}
\end{equation*}
The corresponding stationary equation has the form
\begin{equation}\label{stat_NLS_delta'}
H_\gamma\varphi+\omega\varphi-|\varphi|^{p-1}\varphi=0.
\end{equation}
From \cite[Proposition 5.1]{AdaNoj13} it follows that for $\gamma>0$  two functions below  (odd  and asymmetric) are the solutions to \eqref{stat_NLS_delta'}.
 \begin{equation}\label{odd}
 \varphi_{\omega,\gamma}^{odd}(x)=\sign(x) \left[\frac{(p+1)\omega}{2} \sech^2\left(\frac{(p-1)\sqrt{\omega}}{2}(|x|+y_0)\right)\right]^{\frac{1}{p-1}},\quad\hbox{$x\neq 0$;} \quad \tfrac4{\gamma^2} <\omega,
 \end{equation}
  \begin{equation}\label{asymm}
  \varphi_{\omega,\gamma}^{as}(x)= \left\{
                    \begin{array}{ll}
                      \Big[\frac{(p+1)\omega}{2} \sech^2\Big(\frac{(p-1)\sqrt{\omega}}{2}(x+y_1)\Big)\Big]^{\frac{1}{p-1}}, &\quad \hbox{$x>0;$} \\
                     - \Big[\frac{(p+1)\omega}{2} \sech^2\Big(\frac{(p-1)\sqrt{\omega}}{2}(x-y_2)\Big)\Big]^{\frac{1}{p-1}}, &\quad \hbox{$x<0$,}
                    \end{array}
                  \right., \quad \omega>\tfrac4{\gamma^2}\tfrac{p+1}{p-1},
                   \end{equation}   
where  $y_0=\frac{2}{(p-1)\sqrt{\omega}}\tanh^{-1}(\frac{2}{\gamma\sqrt{\omega}})$ and $y_j=\frac{2}{(p-1)\sqrt{\omega}}\tanh^{-1}(t_j),\, j\in\{1,2\}$. Here $0<t_1<t_2$ are constants satisfying the  system (see formula  (5.2) in \cite{AdaNoj13}):
\begin{equation}\label{t1t2}
\left\{\begin{array}{c} t_1^{p-1}-t_1^{p+1}=t_2^{p-1}-t_2^{p+1},\\
t_1^{-1}+t_2^{-1}=\gamma\sqrt{\omega}.
\end{array}\right.
\end{equation}
 Note that when transposing $y_1$ and $y_2$ in \eqref{asymm}, one gets the second asymmetric solution to \eqref{NLS_delta'}.
In \cite[Theorem 5.3]{AdaNoj13} it had been proven that $ \varphi_{\omega,\gamma}^{odd}(x)$ and $ \varphi_{\omega,\gamma}^{as}(x)$ are the  minimizers (for $\tfrac4{\gamma^2} <\omega\leq \tfrac4{\gamma^2}\tfrac{p+1}{p-1}$ and $\omega> \tfrac4{\gamma^2}\tfrac{p+1}{p-1}$ respectively) of the problem
\begin{equation*}
d_\gamma(\omega)=\inf\{S_{\omega,\gamma}(v):\, v\in H^1(\mathbb{R}\setminus\{0\})\setminus\{0\}, \,\,I_{\omega,\gamma}(v)=0\},
\end{equation*}
where 
\begin{equation*}
S_{\omega,\gamma}(v)=\tfrac{1}{2}\|v'\|_2^2+\tfrac{\omega}{2}\|v\|_2^2-\tfrac{1}{p+1}\|v\|_{p+1}^{p+1}-\tfrac{1}{2\gamma}|v(0+)-v(0-)|^2,
\end{equation*} and
\begin{equation*}
I_{\omega,\gamma}(v)=\|v'\|_2^2+\omega\|v\|_2^2-\|v\|_{p+1}^{p+1}-\tfrac{1}{\gamma}|v(0+)-v(0-)|^2.
\end{equation*}
Moreover, for $\omega>\tfrac4{\gamma^2}$  the odd profile $ \varphi_{\omega,\gamma}^{odd}$ is the minimizer of the problem (see the proof of Theorem 6.13 in \cite{AdaNoj13}) 
\begin{equation*}
d_{\gamma,\odd}(\omega)=\inf\{S_{\omega,\gamma}(v):\, v\in H^1_{\odd}(\mathbb{R}\setminus\{0\})\setminus\{0\},\,\, I_{\omega,\gamma}(v)=0\}.
\end{equation*}
The well-posedness result (in $H^1(\mathbb{R}\setminus\{0\})$) analogous to Theorem \ref{well_H1} was affirmed in \cite[Proposition 3.3 and 3.4]{AdaNoj13}.  Namely, the next proposition holds.
\begin{proposition}
Let $p > 1$. Then for any $u_0\in H^1(\mathbb{R}\setminus\{0\})$ there exists $T >
0$ such that equation \eqref{NLS_delta'} has a unique solution $u(t)\in C\left([0, T],H^1(\mathbb{R}\setminus\{0\})\right)\cap C^1\left([0, T],H^{-1}(\mathbb{R}\setminus\{0\})\right)$ satisfying $u(0) = u_0$. For each $T_0\in (0, T)$ the mapping
$ u_0\in H^1(\mathbb{R}\setminus\{0\})\mapsto u(t)\in C\left([0, T_0],H^1(\mathbb{R}\setminus\{0\})\right)$ is continuous.  Moreover, equation \eqref{NLS_delta'} has a maximal solution defined on
an interval of the form  $[0, T_{H^1})$, and the following "blow-up alternative" holds: either $T_{H^1} = \infty$ or $T_{H^1}<\infty$ and
$$\lim\limits_{t\to T_{H^1}}\|u(t)\|_{H^1(\mathbb{R}\setminus\{0\})} =\infty.$$
Furthermore, the charge and  the energy are conserved
$$ E_\gamma(u(t))= E_\gamma(u_0),\quad \|u(t)\|_2^2=\|u_0\|_2^2$$ for all $t\in[0, T_{H^1})$, where the energy is defined by
\begin{equation*}\label{energy_delta'} E_\gamma(v)=\frac{1}{2}\|v'\|_2^2-\frac{1}{2\gamma}|v(0+)-v(0-)|^2-\frac{1}{p+1}\|v\|_{p+1}^{p+1}.\end{equation*}
\end{proposition}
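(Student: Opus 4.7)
The plan is to follow exactly the strategy that underlies Theorem \ref{well_H1}, with $\Gamma$ replaced by $\mathbb{R}\setminus\{0\}$ and $H$ replaced by $H_\gamma$, and to invoke the self-adjointness of $H_\gamma$ to get the linear propagator. Concretely, I would first observe that since $H_\gamma$ is self-adjoint on $L^2(\mathbb{R})$, the Stone theorem gives a strongly continuous unitary group $\mathcal{T}_\gamma(t)=e^{-iH_\gamma t}$ on $L^2(\mathbb{R})$ whose form domain is precisely $H^1(\mathbb{R}\setminus\{0\})$ endowed with the graph norm of the associated quadratic form $q_\gamma(v)=\|v'\|_2^2-\tfrac{1}{\gamma}|v(0+)-v(0-)|^2$, which is equivalent to the usual $H^1(\mathbb{R}\setminus\{0\})$-norm. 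Consequently $\mathcal{T}_\gamma(t)$ restricts to a $C_0$-group on $H^1(\mathbb{R}\setminus\{0\})$ that is uniformly bounded on bounded time intervals.

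Next I would rewrite the Cauchy problem for \eqref{NLS_delta'} in Duhamel form
\begin{equation*}
u(t)=\mathcal{T}_\gamma(t)u_0+i\int_0^t\mathcal{T}_\gamma(t-s)|u(s)|^{p-1}u(s)\,ds,
\end{equation*}
and run a Banach fixed-point argument on a closed ball
$E_{M,T}=\{u\in C([0,T],H^1(\mathbb{R}\setminus\{0\})):u(0)=u_0,\ \|u\|_{L^\infty_tH^1_x}\le M\}$
endowed with the $C([0,T],L^2)$-metric, as in the proof of Theorem \ref{well_H1}. The key analytic input is the one-dimensional Sobolev embedding $H^1(\mathbb{R}\setminus\{0\})\hookrightarrow L^\infty(\mathbb{R}\setminus\{0\})$, which, combined with the pointwise bound \eqref{well_2c}, shows that $v\mapsto|v|^{p-1}v$ is locally Lipschitz from $H^1(\mathbb{R}\setminus\{0\})$ into itself (Leibniz-differentiating the nonlinearity produces $|u|^{p-1}u'$ and $u^2|u|^{p-3}\overline{u'}$, both controlled in $L^2$ by $\|u\|_\infty^{p-1}\|u'\|_2$). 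This gives local well-posedness together with continuous dependence on the initial data for any $p>1$.

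The maximal interval $[0,T_{H^1})$ and the blow-up alternative follow from the standard bootstrap: if $T_{H^1}<\infty$ and $\liminf_{t\uparrow T_{H^1}}\|u(t)\|_{H^1(\mathbb{R}\setminus\{0\})}<\infty$, then the existence time at $t_0$ close to $T_{H^1}$ produced by the above fixed-point argument is bounded below by a positive constant depending only on the $H^1$-bound, which would extend $u$ past $T_{H^1}$, a contradiction. Conservation of the $L^2$-norm is obtained by pairing the equation with $\bar u$ and taking imaginary parts (which makes sense since $u\in C^1([0,T],H^{-1}(\mathbb{R}\setminus\{0\}))$ and $u\in H^1(\mathbb{R}\setminus\{0\})$ by duality).

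For the conservation of $E_\gamma$, I would work in the domain $\dom(H_\gamma)$ first (arguing as in Theorem \ref{well_D_H} above: set up a contraction in $C([0,T],\dom(H_\gamma))\cap C^1([0,T],L^2)$ for $p\ge 4$, or invoke the one-dimensional Strichartz estimates associated with $H_\gamma$ for the remaining range), where the equation holds strongly in $L^2$; then $\tfrac{d}{dt}E_\gamma(u(t))=\re\langle -H_\gamma u-|u|^{p-1}u,\partial_t u\rangle+\re\langle |u|^{p-1}u,\partial_t u\rangle=\re\langle i\partial_t u,\partial_t u\rangle=0$. Finally, conservation is transferred to arbitrary $u_0\in H^1(\mathbb{R}\setminus\{0\})$ by density of $\dom(H_\gamma)$ and the continuous dependence established above, since $E_\gamma$ and $\|\cdot\|_2^2$ are continuous on $H^1(\mathbb{R}\setminus\{0\})$. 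The main obstacle here is the boundary/jump term $|v(0+)-v(0-)|^2$ in $E_\gamma$: one must check that the unitary flow of $H_\gamma$ actually preserves the jump condition encoded in $\dom(H_\gamma)$, but this is automatic from self-adjointness (any vector in $\dom(H_\gamma)$ stays there under $\mathcal{T}_\gamma(t)$).
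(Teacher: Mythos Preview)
The paper does not supply its own proof of this proposition: it simply records that the result ``was affirmed in \cite[Proposition 3.3 and 3.4]{AdaNoj13}.'' Your outline is the standard semigroup/Duhamel fixed-point argument in the form domain of $H_\gamma$, relying on the one-dimensional embedding $H^1(\mathbb{R}\setminus\{0\})\hookrightarrow L^\infty$ to make the nonlinearity locally Lipschitz; this is precisely the strategy behind the cited references (and behind Theorem~\ref{well_H1} for the star graph), so your approach is aligned with what the paper invokes.

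One small remark on the conservation-law step: to pass from $\dom(H_\gamma)$ to general $H^1(\mathbb{R}\setminus\{0\})$ data you need $\dom(H_\gamma)$-local well-posedness for all $p>1$, not only $p\ge 4$. Your parenthetical mention of Strichartz estimates for $e^{-iH_\gamma t}$ is the right fix (the paper alludes to obtaining them from the explicit kernel in \cite{AlbBrz95}), but in practice the cited proof in \cite{AdaNoj13} and the classical argument in \cite[Chapter 3]{Caz03} handle energy conservation by a direct regularization (e.g., $(I+\varepsilon H_\gamma)^{-1}$) rather than by first establishing $D_{H_\gamma}$-well-posedness; that route avoids the restriction on $p$ entirely.
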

\begin{remark}
The well-posedness in $H^1_{\odd}(\mathbb{R}\setminus\{0\})$ was shown in the proof of \cite[Theorem 6.11]{AdaNoj13} using the explicit form of the integral kernel for the unitary group $e^{-iH_\gamma t}$.
\end{remark}
  Observing that $\inf\sigma(H_\gamma)=\left\{\begin{array}{c}
-\frac{4}{\gamma^2},\,\, \gamma<0\\
0,\,\,\, \gamma\geq 0,
\end{array}\right.$ and repeating the proof of Theorem \ref{well_D_H}  and Proposition \ref{prop-virial}, one gets the well-posedness in $D_{H_\gamma}$ and the following virial identity  for the solution $u(t)$ to the Cauchy problem with the initial data $u_0\in H^1(\mathbb{R}\setminus\{0\})\cap L^2(\mathbb{R}, x^2dx)$
\begin{equation}\label{virial_delta'}
\frac{d^2}{dt^2}\|xu(t)\|^2_2=8P_\gamma(u(t)), \quad t\in[0,T_{H^1}).
\end{equation}
Here
\begin{equation*}
P_\gamma(v)=\|v'\|^2_2-\tfrac{1}{2\gamma}|v(0+)-v(0-)|^2-\tfrac{p-1}{2(p+1)}\|v\|_{p+1}^{p+1},\quad v\in H^1(\mathbb{R}\setminus\{0\}).
\end{equation*} 
\begin{remark}
Observe that Strichartz estimates for $e^{-iH_\gamma t}$ analogous to estimates from \cite[Theorem 1.3]{BanIgn14} might be obtained using the explicit formula (3.6) in \cite{AlbBrz95}. In particular, the case of $A_+=0>A_-$ takes place in formula (3.6).
\end{remark}
Equality \eqref{virial_delta'} is the key ingredient of the proof of subsequent strong instability results.
\begin{theorem}\label{main_as}
Let $\gamma>0,\, p>5.$ There exists $\omega_2>\tfrac4{\gamma^2}\tfrac{p+1}{p-1}$ such that  $e^{i\omega t} \varphi_{\omega,\gamma}^{as}(x)$ is strongly unstable in $H^1(\mathbb{R}\setminus\{0\})$ for $\omega\geq\omega_2.$
\end{theorem}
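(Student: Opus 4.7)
The plan is to mirror the structure of the proof of Theorem \ref{alpha<0}, replacing the profile $\bb \Phi_0^\alpha$ by $\varphi^{as}_{\omega,\gamma}$, the action $\bb S_\omega$ by $S_{\omega,\gamma}$, the functional $\bb P$ by $P_\gamma$, and the graph minimization $d_{\eq}(\omega)$ by the unconstrained minimum $d_\gamma(\omega)$. The key tools already in place are: the variational characterization $d_\gamma(\omega)=S_{\omega,\gamma}(\varphi^{as}_{\omega,\gamma})$ from \cite{AdaNoj13} (valid for $\omega>\tfrac{4}{\gamma^2}\tfrac{p+1}{p-1}$), the virial identity \eqref{virial_delta'}, well-posedness in $D_{H_\gamma}$ and in $H^1(\mathbb R\setminus\{0\})$, and the scaling $v^\lambda(x)=\lambda^{1/2}v(\lambda x)$ which preserves the $L^2$ norm and leaves $|v(0\pm)|$ multiplied by $\lambda^{1/2}$, hence scales the jump term exactly as needed.

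First I would prove the analog of Lemma \ref{second_der}: from $P_\gamma(\varphi^{as}_{\omega,\gamma})=0$, the condition $\partial_\lambda^2 E_\gamma(\varphi^\lambda)|_{\lambda=1}\le 0$ reduces, using the Pohozaev-type relation satisfied by $\varphi^{as}_{\omega,\gamma}$, to an explicit inequality in terms of the parameters $t_1,t_2$ from \eqref{t1t2}. Evaluating $|\varphi^{as}(0\pm)|^2$ and $\|\varphi^{as}\|_{p+1}^{p+1}$ by the same sech integration used in Lemma \ref{second_der}, the inequality becomes an inequality involving $t_1,t_2$; since \eqref{t1t2} forces $t_1,t_2\to 1$ as $\omega\to\infty$ (so the jump term becomes negligible compared to the $L^{p+1}$ mass for $p>5$), the inequality holds for all $\omega$ sufficiently large, which defines $\omega_2>\tfrac{4}{\gamma^2}\tfrac{p+1}{p-1}$.

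Next I would establish two variational lemmas analogous to Lemmas \ref{lem-vp00} and \ref{lem-dsp21}. The analog of Lemma \ref{lem-vp00} states: if $v\in H^1(\mathbb R\setminus\{0\})$ satisfies $\|v\|_{p+1}=\|\varphi^{as}\|_{p+1}$, then $S_{\omega,\gamma}(v)\ge d_\gamma(\omega)$; the proof is the same rescaling argument by $\lambda_1=(\text{Dirichlet-type quantity}/\|v\|_{p+1}^{p+1})^{1/(p-1)}$ combined with identity $S_{\omega,\gamma}=\tfrac12 I_{\omega,\gamma}+\tfrac{p-1}{2(p+1)}\|\cdot\|_{p+1}^{p+1}$ and the variational characterization of $d_\gamma(\omega)$. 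The analog of Lemma \ref{lem-dsp21} states: if $\|v\|_2\le\|\varphi^{as}\|_2$, $\|v\|_{p+1}>\|\varphi^{as}\|_{p+1}$, and $P_\gamma(v)\le 0$, then $d_\gamma(\omega)\le S_{\omega,\gamma}(v)-\tfrac12 P_\gamma(v)$. This is proved by choosing $\lambda_0=(\|\varphi^{as}\|_{p+1}^{p+1}/\|v\|_{p+1}^{p+1})^{2/(p-1)}\in(0,1)$, applying the previous lemma to $v^{\lambda_0}$, and then showing monotonicity of $f(\lambda)=S_{\omega,\gamma}(v^\lambda)-\tfrac{\lambda^2}{2}P_\gamma(v)$ via the same algebraic inequality as in \eqref{lem-dsp28}. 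Here the second-derivative condition from the previous step is crucially used.

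With these lemmas, I define
\begin{equation*}
\mathcal B_{\omega,\gamma}^{as}=\left\{v\in H^1(\mathbb R\setminus\{0\}):\;S_{\omega,\gamma}(v)<d_\gamma(\omega),\;P_\gamma(v)<0,\;\|v\|_2\le\|\varphi^{as}\|_2,\;\|v\|_{p+1}>\|\varphi^{as}\|_{p+1}\right\}
\end{equation*}
and prove, as in Lemma \ref{lem-inv}, that $\mathcal B_{\omega,\gamma}^{as}$ is invariant under the $H^1$-flow: conservation of energy and mass handle the $S_{\omega,\gamma}$ and $\|\cdot\|_2$ conditions, while continuity plus the two lemmas above rule out crossing $\{P_\gamma=0\}$ or $\{\|\cdot\|_{p+1}=\|\varphi^{as}\|_{p+1}\}$. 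Then for $u_0\in\mathcal B_{\omega,\gamma}^{as}\cap L^2(\mathbb R,x^2dx)$, the virial identity \eqref{virial_delta'} combined with Lemma \ref{lem-dsp21}-analog gives $\tfrac{1}{16}\frac{d^2}{dt^2}\|xu(t)\|_2^2\le S_{\omega,\gamma}(u_0)-d_\gamma(\omega)<0$, forcing finite-time blow-up. Finally, the scaling $\lambda>1$ places $(\varphi^{as})^\lambda$ into $\mathcal B_{\omega,\gamma}^{as}\cap L^2(\mathbb R,x^2dx)$: the $\beta=\tfrac{p-1}{2}>2$ condition makes $\lambda=1$ the unique maximizer of $\lambda\mapsto S_{\omega,\gamma}((\varphi^{as})^\lambda)$, and scaling preserves $L^2$ mass while strictly increasing $L^{p+1}$ norm for $\lambda>1$. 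Since $(\varphi^{as})^\lambda\to \varphi^{as}$ in $H^1$ as $\lambda\to 1$, this yields the strong instability.

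The main obstacle is the first step: quantifying $\omega_2$ from the second-derivative condition. Unlike the graph case where $a_k=\tanh^{-1}(\alpha/(N\sqrt\omega))$ gives a single transparent parameter, the asymmetric profile is governed by the coupled system \eqref{t1t2} for $t_1<t_2$, so one has to do the sech integrals over $[t_1,1]$ and $[-1,-t_2]$ and control the resulting inequality in both parameters. The asymptotic regime $\omega\to\infty$ forces $t_1,t_2\to 1^-$, and it is in this limit that the $p>5$ algebra mirrors Lemma \ref{second_der}; making this rigorous (e.g.\ by a continuity argument together with an explicit limiting inequality) is the technical heart of the proof.
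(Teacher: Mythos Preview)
Your overall plan matches the paper's approach: reduce to the second-derivative condition $\partial_\lambda^2 E_\gamma((\varphi^{as}_{\omega,\gamma})^\lambda)|_{\lambda=1}\le 0$, verify it for large $\omega$, and then copy the chain of Lemmas \ref{lem-vp00}--\ref{blow_up_neg} verbatim with $\bb S_\omega,\bb P,d_{\eq}$ replaced by $S_{\omega,\gamma},P_\gamma,d_\gamma$. That part is fine and is exactly what the paper does.

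However, there is a concrete error in the step you flag as ``the technical heart of the proof''. You claim that the system \eqref{t1t2} forces $t_1,t_2\to 1$ as $\omega\to\infty$. This is impossible: the second equation reads $t_1^{-1}+t_2^{-1}=\gamma\sqrt{\omega}\to\infty$, so at least one of $t_1,t_2$ must tend to $0$. Since $0<t_1<t_2$, one gets $t_1\to 0$; then the first equation $t_1^{p-1}(1-t_1^2)=t_2^{p-1}(1-t_2^2)\to 0$ forces $t_2\to 1$. The correct asymptotics (quoted in the paper from \cite{AdaNoj13}) are
\[
t_1=\frac{1}{\gamma\sqrt{\omega}}+o(\omega^{-1/2}),\qquad t_2=1-\frac{1}{2\gamma^{p-1}\omega^{(p-1)/2}}+o(\omega^{-(p-1)/2}).
\]
This matters for the argument. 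After the sech computations, the second-derivative condition becomes
\[
\frac{p-5}{2}\Bigl[\int_{t_1}^1(1-s^2)^{\frac{2}{p-1}}ds+\int_{t_2}^1(1-s^2)^{\frac{2}{p-1}}ds\Bigr]
\;>\;\frac{1}{\gamma\sqrt{\omega}}\Bigl[(1-t_1^2)^{\frac{1}{p-1}}+(1-t_2^2)^{\frac{1}{p-1}}\Bigr]^2.
\]
With the correct asymptotics the left side tends to $\tfrac{p-5}{2}\int_0^1(1-s^2)^{2/(p-1)}ds>0$ while the right side tends to $0$, which is what gives the existence of $\omega_2$. Under your claimed asymptotics $t_1,t_2\to 1$, both sides tend to $0$ and you would need a rate comparison that your proposal does not supply; the heuristic ``jump term becomes negligible compared to the $L^{p+1}$ mass'' is not justified in that (incorrect) regime because the $L^{p+1}$ mass would itself be vanishing. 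Fix the asymptotics and the rest of your outline goes through exactly as in the paper.
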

  \begin{theorem}\label{main_odd}
Let $\gamma>0,\, p>5,\omega>\tfrac4{\gamma^2}\tfrac{p+1}{p-1}.$ Let $\xi_3(p)\in(0,1)$ be a unique solution of 
$$\frac{p-5}{2}\int\limits_\xi^1(1-s^2)^{\frac{2}{p-1}}ds=\xi(1-\xi^2)^{\frac{2}{p-1}},\quad (0<\xi<1),$$ and define $\omega_3=\omega_3(p,\gamma)=\frac{4}{\gamma^2\xi_3^2(p)}$. Then the standing wave solution $e^{i\omega t} \varphi_{\omega,\gamma}^{odd}(x)$ is strongly unstable for all $\omega\in [\omega_3, \infty).$\end{theorem}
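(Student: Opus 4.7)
The proof strategy is a direct adaptation of Theorem \ref{alpha<0} to the odd-symmetry sector of the NLS-$\delta'$ equation. The key observation is that on $H^1_{\odd}(\mathbb{R}\setminus\{0\})$ one has $\tfrac{1}{2\gamma}|v(0+)-v(0-)|^2=\tfrac{2}{\gamma}|v(0+)|^2$, so the functionals $S_{\omega,\gamma}$, $I_{\omega,\gamma}$ and $P_\gamma$ restricted to the odd sector coincide, up to a factor of two, with the half-line functionals driving the proof of Theorem \ref{alpha<0}, under the correspondence $-\tfrac{\alpha}{N}\leftrightarrow \tfrac{2}{\gamma}$. With this identification, $\omega_3$ is the exact analogue of $\omega_1$, and $\varphi_{\omega,\gamma}^{odd}$ plays the role of a single-edge component of $\bb\Phi_0^\alpha$. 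Computing $\partial^2_\lambda E_\gamma((\varphi_{\omega,\gamma}^{odd})^{\lambda})|_{\lambda=1}$, using $P_\gamma(\varphi_{\omega,\gamma}^{odd})=0$, and setting $\xi=\tfrac{2}{\gamma\sqrt\omega}$, one finds this second derivative is nonpositive iff $\xi(1-\xi^2)^{2/(p-1)}\leq \tfrac{p-5}{2}\int_\xi^1(1-s^2)^{2/(p-1)}ds$, which is precisely the defining relation of $\xi_3=\xi_1$; hence $\omega\geq\omega_3$ is equivalent to $\partial^2_\lambda E_\gamma((\varphi_{\omega,\gamma}^{odd})^{\lambda})|_{\lambda=1}\leq 0$, exactly as in Lemma \ref{second_der}.

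Next, I would establish two variational lemmas in full parallel with Lemmas \ref{lem-vp00} and \ref{lem-dsp21}. First, if $v\in H^1_{\odd}(\mathbb{R}\setminus\{0\})$ satisfies $\|v\|_{p+1}=\|\varphi_{\omega,\gamma}^{odd}\|_{p+1}$, then $S_{\omega,\gamma}(v)\geq d_{\gamma,\odd}(\omega)$; this follows by Nehari rescaling together with $d_{\gamma,\odd}(\omega)=S_{\omega,\gamma}(\varphi_{\omega,\gamma}^{odd})=\tfrac{p-1}{2(p+1)}\|\varphi_{\omega,\gamma}^{odd}\|_{p+1}^{p+1}$ (from \cite[proof of Theorem 6.13]{AdaNoj13} combined with \eqref{var_1}). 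Second, and crucially, if $v\in H^1_{\odd}(\mathbb{R}\setminus\{0\})$ satisfies $\|v\|_2\leq\|\varphi_{\omega,\gamma}^{odd}\|_2$, $\|v\|_{p+1}>\|\varphi_{\omega,\gamma}^{odd}\|_{p+1}$ and $P_\gamma(v)\leq 0$, then $d_{\gamma,\odd}(\omega)\leq S_{\omega,\gamma}(v)-\tfrac{1}{2}P_\gamma(v)$. Its proof mirrors Lemma \ref{lem-dsp21}: rescale $v^{\lambda_0}$ so that $\|v^{\lambda_0}\|_{p+1}=\|\varphi_{\omega,\gamma}^{odd}\|_{p+1}$ (giving $0<\lambda_0<1$), reduce to $f(\lambda_0)\leq f(1)$ for $f(\lambda)=S_{\omega,\gamma}(v^\lambda)-\tfrac{\lambda^2}{2}P_\gamma(v)$, and use the Pohozaev identity together with the three pointwise estimates $\|v\|_2\leq\|\varphi_{\omega,\gamma}^{odd}\|_2$, $I_{\omega,\gamma}(v^{\lambda_0})\geq 0$ and $P_\gamma(v)\leq 0$ to reduce to the algebraic inequality \eqref{lem-dsp28} on $(0,1)$, which is already verified.

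With these lemmas in hand, I define
\[
\mc B^-_{\omega,\gamma}=\{v\in H^1_{\odd}(\mathbb{R}\setminus\{0\}):\,S_{\omega,\gamma}(v)<d_{\gamma,\odd}(\omega),\,P_\gamma(v)<0,\,\|v\|_2\leq\|\varphi_{\omega,\gamma}^{odd}\|_2,\,\|v\|_{p+1}>\|\varphi_{\omega,\gamma}^{odd}\|_{p+1}\},
\]
and show its invariance under the flow of \eqref{NLS_delta'} by combining conservation of mass and energy, preservation of odd symmetry (from \cite[Theorem 6.11]{AdaNoj13}) and the two variational lemmas, via the same contradiction argument as in Lemma \ref{lem-inv}. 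For $u_0\in \mc B^-_{\omega,\gamma}\cap L^2(\mathbb{R},x^2 dx)$ the virial identity \eqref{virial_delta'} together with the key inequality yields $\tfrac{d^2}{dt^2}\|xu(t)\|_2^2\leq 16(S_{\omega,\gamma}(u_0)-d_{\gamma,\odd}(\omega))<0$, forcing $T_{H^1}<\infty$. Strong instability is then obtained by choosing $u_0=(\varphi_{\omega,\gamma}^{odd})^{\lambda}$ with $\lambda>1$ close to $1$: Step 1 gives $\partial^2_\lambda S_{\omega,\gamma}((\varphi_{\omega,\gamma}^{odd})^\lambda)|_{\lambda=1}\leq 0$ (since the $L^2$-norm is invariant under the rescaling $v\mapsto \lambda^{1/2}v(\lambda\cdot)$), so $\lambda\mapsto S_{\omega,\gamma}((\varphi_{\omega,\gamma}^{odd})^\lambda)$ attains its maximum at $\lambda=1$, hence $(\varphi_{\omega,\gamma}^{odd})^\lambda\in\mc B^-_{\omega,\gamma}\cap L^2(\mathbb{R},x^2 dx)$, while $\|(\varphi_{\omega,\gamma}^{odd})^\lambda-\varphi_{\omega,\gamma}^{odd}\|_{H^1(\mathbb{R}\setminus\{0\})}\to 0$ as $\lambda\downarrow 1$.

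The main obstacle is the key variational inequality (the analogue of Lemma \ref{lem-dsp21}): the algebraic fact that the auxiliary function $g(\lambda)$ in \eqref{lem-dsp28} is nonincreasing on $(0,1)$ depends only on $p>5$ and is already handled in the graph proof, so it transfers without new work, yet it is exactly what dictates the quantitative restriction $\omega\geq\omega_3$ through the Step-1 reformulation. A secondary technical point is to confirm that the well-posedness in $D_{H_\gamma}$ and the virial identity \eqref{virial_delta'} specialize correctly to $H^1_{\odd}(\mathbb{R}\setminus\{0\})$; this is ensured by the remarks preceding Theorem \ref{main_as} and by the odd-symmetry preservation noted above.
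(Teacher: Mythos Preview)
Your proposal is correct and follows essentially the same approach as the paper: the paper explicitly states that one needs to repeat the proof of Theorem~\ref{alpha<0}, with the only step requiring care being the analogue of Lemma~\ref{second_der}, where \eqref{sec_1} is replaced by $\tfrac{1}{\gamma}|\varphi_\gamma(0+)-\varphi_\gamma(0-)|^2\le\tfrac{(p-5)(p-1)}{2(p+1)}\|\varphi_\gamma\|_{p+1}^{p+1}$ and $\xi=\tfrac{2}{\gamma\sqrt{\omega}}$. Your observation of the structural correspondence $-\alpha/N\leftrightarrow 2/\gamma$ between the odd sector and the half-line functionals makes this transfer transparent, and your subsequent verification of the variational lemmas, the invariance of $\mc B^-_{\omega,\gamma}$, and the blow-up via the virial identity is exactly the adaptation the paper has in mind.
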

\begin{remark}
Observe that $\omega_3>\tfrac4{\gamma^2}\tfrac{p+1}{p-1}$ since by \cite[Proposition 6.11]{AdaNoj13}  $e^{i\omega t} \varphi_{\omega,\gamma}^{odd}(x)$ is orbitally stable for $\tfrac4{\gamma^2}<\omega<\tfrac4{\gamma^2}\tfrac{p+1}{p-1}.$
\end{remark}

\textit{Key steps of the proofs of Theorem \ref{main_as} and \ref{main_odd}}.
Basically one needs to repeat the proof of Theorem \ref{alpha<0}. The only step which should be checked  carefully    is Lemma \ref{second_der}.

\textbf{1.} Consider the case of $\varphi_{\omega,\gamma}^{as}(x)$. Denote $\varphi_\gamma:=\varphi_{\omega,\gamma}^{as}$.   We need to show that $\partial_\lambda^2 E_\gamma(\varphi_\gamma^\lambda)|_{\lambda=1}\leq 0$ for $\omega\in[\omega_2,\infty),$ where $\omega_2$ is sufficiently large.  Using, $P_\gamma(\varphi_\gamma)=0$, it is easily seen that the condition  $\partial_\lambda^2 E_\gamma(\varphi_\gamma^\lambda)|_{\lambda=1}\leq 0$ is equivalent to 
\begin{equation}\label{delta'1}
\frac{1}{\gamma}|\varphi_\gamma(0+)-\varphi_\gamma(0-)|^2<\frac{(p-5)(p-1)}{2(p+1)}\|\varphi_\gamma\|_{p+1}^{p+1}.
\end{equation}
From \eqref{asymm}  and \eqref{t1t2} one gets
\begin{equation}\label{delta'2}
|\varphi_\gamma(0+)-\varphi_\gamma(0-)|^2=\left(\frac{p+1}{2}\omega\right)^{\tfrac 2{p-1}}\left((1-t_1^2)^{\tfrac 1{p-1}}+(1-t_2^2)^{\tfrac 1{p-1}}\right)^2,
\end{equation}
and 
\begin{equation}\label{delta'3}
\|\varphi_\gamma\|^{p+1}_{p+1}=\frac{2}{(p-1)\sqrt{\omega}}\left(\frac{p+1}{2}\omega\right)^{\tfrac{p+1}{p-1}}\left[\int\limits_{t_1}^1(1-s^2)^{\tfrac{2}{p-1}}ds+\int\limits_{t_2}^1(1-s^2)^{\tfrac{2}{p-1}}ds\right].
\end{equation}
Combining \eqref{delta'2} and \eqref{delta'3}, we deduce from \eqref{delta'1} that the condition $\partial_\lambda^2 E_\gamma(\varphi_\gamma^\lambda)|_{\lambda=1}\leq 0$ is equivalent to
\begin{equation}\label{delta'4}
\frac{p-5}{2}\left[\int\limits_{t_1}^1(1-s^2)^{\tfrac{2}{p-1}}ds+\int\limits_{t_2}^1(1-s^2)^{\tfrac{2}{p-1}}ds\right]-\frac{1}{\beta\sqrt{\omega}}\left[(1-t_1^2)^{\tfrac 1{p-1}}+(1-t_2^2)^{\tfrac 1{p-1}}\right]^2>0.
\end{equation}
Observe that $t_1$ and $t_2$ have the following asymptotics as $\omega \to \infty$ (see formula (6.34) in \cite{AdaNoj13})
\begin{equation*}
t_1=\frac{1}{\gamma\sqrt{\omega}}+o(\omega^{-\tfrac 1{2}}), \quad t_2=1-\frac{1}{2\gamma^{p-1}\omega^{\tfrac{p-1}{2}}}+o(\omega^{-\tfrac{p-1}{2}}).
\end{equation*} 
From the above asymptotics, sending $\omega$ to infinity, one gets that the limit of the expression in \eqref{delta'4} is positive and equals $\frac{p-5}{2}\int\limits_0^1(1-s^2)^{\tfrac{2}{p-1}}ds$. Hence   
the expression in \eqref{delta'4} is positive for $\omega$ large enough. This ensures the existence of $\omega_2$ such that $\partial_\lambda^2 E_\gamma(\varphi_\gamma^\lambda)|_{\lambda=1}\leq 0$ for $\omega\in [\omega_2,\infty).$

\textbf{2.}  Let now $\varphi_\gamma:=\varphi_{\omega,\gamma}^{odd}$. We need to show that $\partial_\lambda^2 E_\gamma(\varphi_\gamma^\lambda)|_{\lambda=1}\leq 0$ for $\omega\in[\omega_3,\infty).$
 The proof repeats the one of Lemma \ref{second_der}. The only difference is that  inequality \eqref{sec_1} has to be substituted by $$\frac{1}{\gamma}|\varphi_\gamma(0+)-\varphi_\gamma(0-)|^2<\frac{(p-5)(p-1)}{2(p+1)}\|\varphi_\gamma\|_{p+1}^{p+1}$$ and $\xi(\omega,\gamma)=\frac{2}{\gamma\sqrt{\omega}}.$

\section{Appendix}\label{sec6}
Let $\mc I\subseteq \mathbb{R}$ be an open interval. We say that  the function   $g(s): \mc I\to L^2(\Gamma)$ is \textit{$L^2$-differentiable  on $\mc I$} if the limit $\frac{d}{ds}g(s):=\lim\limits_{h\to 0}\frac{g(s+h)-g(s)}{h}$ exists in $L^2(\Gamma)$  for any $s\in \mc I$.
Below we give a sketch of the proof of the following "product rule".
\begin{proposition}\label{leibniz}
Let operator $H$ be defined by \eqref{D_alpha},  and the function  $g(s): \mc I\to L^2(\Gamma)$ be $L^2$-differentiable on the open interval $\mc I$, then we have
\begin{equation}\label{well_34}
\frac{d}{ds}\left[-i(H+m)^{-1}e^{iHs}g(s)\right]=e^{iHs}g(s)-m(H+m)^{-1}e^{iHs}g(s)-i(H+m)^{-1}e^{iHs}\frac{d}{ds}g(s).
\end{equation}
\end{proposition}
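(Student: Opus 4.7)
The plan is to compute the $L^2$-limit of the difference quotient of $A(s):=-i(H+m)^{-1}e^{iHs}g(s)$ directly, exploiting the fact that $(H+m)^{-1}$, $e^{iHs}$, and $e^{iHh}$ mutually commute as functions of the self-adjoint operator $H$ (functional calculus), together with the algebraic identity $H(H+m)^{-1}=I-m(H+m)^{-1}$. Writing $e^{iH(s+h)}=e^{iHh}e^{iHs}$ and splitting
\begin{equation*}
e^{iH(s+h)}g(s+h)-e^{iHs}g(s)=(e^{iHh}-I)e^{iHs}g(s+h)+e^{iHs}\bigl(g(s+h)-g(s)\bigr),
\end{equation*}
I obtain
\begin{equation*}
\frac{A(s+h)-A(s)}{h}=-i\frac{e^{iHh}-I}{h}(H+m)^{-1}e^{iHs}g(s+h)-i(H+m)^{-1}e^{iHs}\frac{g(s+h)-g(s)}{h}.
\end{equation*}

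The second term on the right converges in $L^2(\Gamma)$ to $-i(H+m)^{-1}e^{iHs}\frac{d}{ds}g(s)$, since $(H+m)^{-1}e^{iHs}$ is bounded on $L^2(\Gamma)$ and $g$ is $L^2$-differentiable. For the first term I further decompose $g(s+h)=g(s)+(g(s+h)-g(s))$. Because $(H+m)^{-1}e^{iHs}g(s)$ belongs to $\dom(H)$, Stone's theorem applied to the unitary group $\{e^{iHh}\}_{h\in\mathbb{R}}$ (whose generator is $iH$) gives
\begin{equation*}
\frac{e^{iHh}-I}{h}(H+m)^{-1}e^{iHs}g(s)\longrightarrow iH(H+m)^{-1}e^{iHs}g(s)\quad\text{in }L^2(\Gamma)\text{ as }h\to 0.
\end{equation*}

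The main obstacle is showing that the remainder $\frac{e^{iHh}-I}{h}(H+m)^{-1}e^{iHs}\bigl(g(s+h)-g(s)\bigr)$ vanishes in the limit, since $g(s+h)-g(s)$ is not a fixed vector in $\dom(H)$. For this I use a uniform operator-norm bound on $\frac{e^{iHh}-I}{h}(H+m)^{-1}$: by the spectral theorem this norm equals $\sup_{\lambda\in\sigma(H)}\bigl|\tfrac{e^{i\lambda h}-1}{h(\lambda+m)}\bigr|$, and the elementary estimate $|e^{i\lambda h}-1|\le |\lambda h|$ yields $\bigl|\tfrac{e^{i\lambda h}-1}{h(\lambda+m)}\bigr|\le \tfrac{|\lambda|}{\lambda+m}$. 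The choice $m=1-2\inf\sigma(H)$ guarantees $\lambda+m\ge 1-\inf\sigma(H)>0$ on $\sigma(H)\subset[\inf\sigma(H),\infty)$, so the function $\lambda\mapsto |\lambda|/(\lambda+m)$ is continuous, bounded away from singularities, and tends to $1$ at infinity, hence uniformly bounded by some constant $C>0$ independent of $h$. The remainder is therefore dominated in $L^2(\Gamma)$ by $C\|g(s+h)-g(s)\|_2$, which tends to zero by continuity of $g$. Collecting the three limits and using $-i\cdot iH(H+m)^{-1}=H(H+m)^{-1}=I-m(H+m)^{-1}$, I obtain \eqref{well_34}.
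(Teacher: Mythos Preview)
Your argument is correct and takes a genuinely different route from the paper. The paper rewrites $e^{iHs}=e^{i(H+m)s}e^{-ims}$ and splits the difference quotient into three pieces corresponding to the increments of $e^{i(H+m)s}$, of the scalar factor $e^{-ims}$, and of $g$; the first of these is then handled by the spectral theorem together with a dominated-convergence argument in which the spectral measure $d(E_H(\cdot)g(s+h),g(s+h))$ itself depends on $h$. You instead factor $e^{iH(s+h)}=e^{iHh}e^{iHs}$, isolate the fixed vector $(H+m)^{-1}e^{iHs}g(s)\in\dom(H)$ so that Stone's theorem applies directly, and control the moving remainder by the clean uniform bound $\bigl\|\tfrac{e^{iHh}-I}{h}(H+m)^{-1}\bigr\|\le \sup_{\lambda\in\sigma(H)}\tfrac{|\lambda|}{\lambda+m}<\infty$. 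This sidesteps the varying-measure issue entirely and makes transparent why the resolvent is exactly the right smoothing: it converts the unbounded generator into a bounded operator uniformly in $h$. The paper's decomposition, on the other hand, produces the three target terms of \eqref{well_34} one-to-one from the outset, whereas in your approach the first two terms emerge together from the single identity $H(H+m)^{-1}=I-m(H+m)^{-1}$.
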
  
\begin{proof}
Denote $F(s)=-i(H+m)^{-1}e^{iHs}g(s)$, then $\frac{d}{ds}F(s)=\lim\limits_{h \to 0}\frac{F(s+h)-F(s)}{h}.$
We have
\begin{equation}\label{well_35}
\begin{split}
&\frac{F(s+h)-F(s)}{h}\\&=\frac{1}{h}\left\{-i(H+m)^{-1}e^{i(H+m)(s+h)}e^{-im(s+h)}g(s+h)+i(H+m)^{-1}e^{i(H+m)s}e^{-ims}g(s)\right\}\\&= -i(H+m)^{-1}\frac 1{h}\left\{e^{i(H+m)(s+h)}-e^{i(H+m)s}\right\}e^{-im(s+h)}g(s+h)\\&-i(H+m)^{-1}e^{i(H+m)s}\frac 1{h}\left\{(e^{-im(s+h)}-e^{-ims})g(s+h)+e^{-ims}(g(s+h)-g(s))\right\}.
\end{split}
\end{equation}
To prove the assertion we need to analyze three last terms of \eqref{well_35}, that is we are aimed to prove that 
\begin{equation*}
\begin{split}
&-i(H+m)^{-1}\frac 1{h}\left\{e^{i(H+m)(s+h)}-e^{i(H+m)s}\right\}e^{-im(s+h)}g(s+h)\,\,\,\,\longrightarrow\,\,\,\, e^{iHs}g(s),\\
&-i(H+m)^{-1}e^{i(H+m)s}\frac1{h}(e^{-im(s+h)}-e^{-ims})g(s+h)\,\,\,\,\longrightarrow\,\,\,\, -m(H+m)^{-1}e^{iHs}g(s),\\
&-i(H+m)^{-1}e^{i(H+m)s}\frac1{h}e^{-ims}(g(s+h)-g(s))\,\,\,\,\longrightarrow\,\,\,\, -i(H+m)^{-1}e^{iHs} \frac{d}{ds}g(s)
\end{split}
\end{equation*}
in $L ^2(\Gamma)$ as $h\to 0$.

$\bullet$  By the Spectral Theorem for the self-adjoint operator $H$ we have:
\begin{equation}\label{well_36}
\begin{split}
&\|-i(H+m)^{-1}\frac 1{h}(e^{i(H+m)(s+h)}-e^{i(H+m)s})e^{-im(s+h)}g(s+h)-e^{iHs}g(s)\|_2^2\\&\leq
2\|-i(H+m)^{-1}\frac 1{h}(e^{i(H+m)(s+h)}-e^{i(H+m)s})e^{-im(s+h)}g(s+h)-e^{iHs}g(s+h)\|_2^2\\&+ 2\|g(s+h)-g(s)\|_2^2\\&\leq 2\int\limits_{\mathbb{R}}|-i(z+m)^{-1}\frac 1{h}(e^{i(z+m)(s+h)}-e^{i(z+m)s})e^{-im(s+h)}-e^{izs}|^2d(E_H(z)g(s+h), g(s+h))\\&+2\|g(s+h)-g(s)\|_2^2,
\end{split}
\end{equation}
where $E_H(z)$ is the spectral measure associated with $H$.
Denote by $f_h(z)$ the function under the integral in the above inequality. Making trivial manipulations one may show that $f_h(z)=e^{izs}(e^{iz\overline{h}}e^{im(\overline{h}-h)}-1)$, where  $\overline{h}$ lies between $0$ and $h$. It is obvious that $f_h(z)$ is bounded and converges to zero pointwise as $h\to 0$. Observing that
 $$(E_H(M)g(s+h),g(s+h))\underset{h\to 0}{\longrightarrow}(E_H(M)g(s),g(s))$$ for any  Borel set $M$, and using the  Dominated Convergence Theorem, we conclude that expression \eqref{well_36} tends to zero as $h\to 0$.
Finally, $-i(H+m)^{-1}\frac1{h}(e^{i(H+m)(s+h)}-e^{i(H+m)s)})e^{-im(s+h)}g(s+h)$ tends to $e^{iHs}g(s)$.

$\bullet$ Using boundedness of the resolvent $(H+m)^{-1}$ we get 
\begin{equation*}
\begin{split}
&\|-i(H+m)^{-1}e^{i(H+m)s}\frac 1{h}\left\{e^{-im(s+h)}-e^{-ims}\right\}g(s+h)+m(H+m)^{-1}e^{i(H+m)s}e^{-ims}g(s)\|_2\\&\leq
\|(H+m)^{-1}e^{i(H+m)s}\left\{-i\frac 1{h}(e^{-im(s+h)}-e^{-ims})g(s+h)+me^{-ims}g(s)\right\}\|_2\\&\leq C\left|-i\frac 1{h}(e^{-im(s+h)}-e^{-ims})+me^{-ims}\right\||g(s+h)\|_2+C\left|me^{-ims}\right\||g(s+h)-g(s)\|_2.
\end{split}
\end{equation*}
The expression above obviously tends to zero and therefore $-i(H+m)^{-1}e^{i(H+m)s}\frac 1{h}(e^{-im(s+h)}-e^{-ims})g(s+h)$ tends to $-m(H+m)^{-1}e^{iHs}g(s)$.

$\bullet$ Finally, estimating the last term in \eqref{well_35}
\begin{equation*}
\begin{split}
&\|-i(H+m)^{-1}e^{i(H+m)s}e^{-ims}\left\{\frac 1{h}(g(s+h)-g(s))-\frac{d}{ds}g(s)\right\}\|_2\\&\leq C\|\frac 1{h}(g(s+h)-g(s))-\frac{d}{ds}g(s)\|_2,
\end{split}
\end{equation*}
we get that $-i(H+m)^{-1}e^{i(H+m)s}e^{-ims}\frac 1{h}(g(s+h)-g(s))$ tends to $-i(H+m)^{-1}e^{iHs}\frac{d}{ds}g(s)$.
Summarizing the estimates of the three last terms in \eqref{well_35}, we finally obtain formula \eqref{well_34}.

\end{proof} 
\section*{Acknowledgements.}
This work started when M.O. visited IME-USP with the support of FAPESP (project: 17/17698-1). 
M.O. would like to thank Jaime Angulo Pava for his warm hospitality.

\end{document}